\def\R{\Bbb R}
\def\si{\sigma}
\def\cal{\mathcal}
\def\co{{\cal O}}
\def\Q{\Bbb Q}
\def\part{\partial}
\def\we{\wedge}
\def\e{\epsilon}
\def\dis{\displaystyle}
\def\P{\mathbb P}
\def\C{\mathbb C}
\def\s{{\square^n}}
\def\sd{\operatorname{sd}}
\def\codim{{{\rm codim}\,}}
\def\ov{\overline}
\def\p1{\prec}
\def\<{\langle}
\def\>{\rangle}
\def\sd{\operatorname{sd}}
\newcommand{\sign}{\operatorname{sign}}
\newcommand{\Alt}{\operatorname{Alt}}
\newtheorem{theorem}{Theorem}[section] 
\newtheorem{proposition}[theorem]{Proposition} 
\newtheorem{corollary}[theorem]{Corollary} 
\newtheorem{definition}[theorem]{Definition} 
\newtheorem{remark}[theorem]{Remark} 
\newtheorem{lemma}[theorem]{Lemma} 
\newtheorem{lemma-definition}[theorem]{Lemma-Definition} 
\newtheorem{proposition-definition}[theorem]{Proposition-Definition}
\newtheorem{notation}[theorem]{Notation}
\newcommand{\CC}{{\mathbb{C}}}
\newcommand{\PP}{{\mathbb{P}}}
\newcommand{\QQ}{{\mathbb{Q}}}
\newcommand{\RR}{{\mathbb{R}}}
\newcommand{\ZZ}{{\mathbb{Z}}}
\newcommand{\Ker}{\operatorname{Ker}}
\newcommand{\al}{\alpha}  \newcommand{\ga}{\gamma} 
 \newcommand{\eps}{\epsilon} \newcommand{\la}{\lambda}
\newcommand{\mapright}[1]{%
  \smash{\mathop{%
    \hbox to 1cm{\rightarrowfill}}\limits^{#1} } } 
\newcommand{\smapr}[1]{%
  \smash{\mathop{%
    \hbox to 0.5cm{\rightarrowfill}}\limits^{#1} } } 
\newcommand{\maprb}[1]{%
  \smash{\mathop{%
    \hbox to 1cm{\rightarrowfill}}\limits_{#1} } } 
\newcommand{\mapleft}[1]{%
  \smash{\mathop{%
    \hbox to 1cm{\leftarrowfill}}\limits^{#1} } }
\newcommand{\maplb}[1]{%
  \smash{\mathop{%
    \hbox to 1cm{\leftarrowfill}}\limits_{#1} } }
\def\alt{\text{alt}}
\def\AC{{\cal A\cal C}}
\def\res{\operatorname{Res}}
\def\sd{\text{sd}}
\begin{document}

\title{Semi-algebraic chains on projective varieties and the Abel-Jacobi map for higher Chow cycles
} 
\author{Kenichiro Kimura}

\maketitle

\setcounter{tocdepth}{3}

\markboth
{}
{}

\thispagestyle{empty}
\setcounter{tocdepth}{1}
\section{Introduction}
The main object of study in this paper is the cohomology groups of smooth quasi-projective 
complex varieties. The reader may be skeptical about finding anything new in general about this
subject. What we are going to do is to describe the cohomology groups of a smooth quasi-projective
variety $U$ relative to a normal crossing divisor $\bf H$, in terms of  
$\delta$-{\it admissible chains.} 
Roughly speaking, a $\delta$-admissible chain on $U$ relative to a subvariety $Y$ of $U$ is a simplicial
semi-algebraic chain $\ga$ such that the support of $\ga$ and that of $\delta\ga$ (the boundary of $\ga$) meet $Y$
properly.   One of the merits of $\delta$-admissible chains is that 
they admit pull-back to $Y$. By this property, for a normal
crossing divisor $\bf H$ on $U$ we construct a certain complex $AC^*(U,\bf H)$ of $\delta$-admissible chains
such that we have an isomorphism
\[H^j(AC^*(U,{\bf H}))\simeq H^j(U,{\bf H};\ZZ).\]
The proof is not quite elementary, and we needed to use some sheaf theory.

\noindent We can describe the duality pairing between the de Rham and the singular cohomology via
integral on $\delta$-admissible chains. Let $\varphi$ be a smooth $p$-form on $U$ with compact support which has logarithmic
singularity along $\bf H$, and let $\ga$ be a $\delta$-admissible $(p+1)$-chain on $U$. Then in Proposition
\ref{C-S} we show that  the equality 
\[\int_{\part_{\bf H}\ga} \res \varphi=\int_{\delta\ga}\varphi-\int_\ga d\varphi\] 
holds. For the definition of the differential $\part_{\bf H}$ see Definition \ref{doubleAC}.  This
formula is a generalization of the Stokes formula, and we call this equality the Cauchy-Stokes formula. Note that the integral of a differential form with log poles 
on a $C^\infty$-
chain does not necessarily converge, even if the chain meets the faces properly.
See the example in Remark \ref{counter example} (1).
By the Cauchy-Stokes formula we have a certain pairing 
\[AC^*(U,{\bf H})\otimes A^*_c(U)(\log {\bf H})\to \C[-2\dim U]\]
which is a map of complexes, and this induces the duality pairing
\[H^{2\dim U-j}(U,{\bf H};\QQ)\otimes H^j(X-{\bf H},D;\C)\to \C.\]
Here $X$ is a smooth projective complex variety,  $D$ is a closed subset of $X$
such that $X-D$ is isomorphic to $U$.
As an application, in \S 4 we show that the Abel-Jacobi map for higher Chow cycles can be described 
in terms of $\delta$-admissible chains, 
which can be regarded as a natural generalization of 
the original definition by Griffiths of the Abel-Jacobi map for ordinary algebraic cycles. 
Details are in \S 4, but we point out one advantage of the admissible chains. Let $Y$
be a smooth projective complex variety, and 
$Z\in z^p(Y,n)^{\alt}$ be a higher Chow cycle such that $\part_\square Z=0$. See \S 4 for the definitions. As is described in
\cite{B3} and \cite{Sch}, the definition 
of the Abel-Jacobi map starts with defining a cohomology class of $Z$ in $H^{2p}_{|Z|}(\s\times Y,\part \s\times Y;\QQ(p))$, which is the cohomology
of $\s\times Y$ with support on $Z$, relative to the faces of $\square^n\times Y$. By using the admissible chains we can construct a certain 
complex which computes this cohomology group, such that the class of the cycle $Z$ is represented by $Z$ itself. If $Z$ is homologous to zero, then there is a chain $\Gamma$ which has $Z$ as the boundary. It is an immediate consequence
of the construction that the class of $\Gamma$ gives the Abel-Jacobi image of $Z$. This construction works also for open
varieties, and for  relative higher Chow cycles. As an example, we will see that the Hodge realization of the cycles of polylogarithms
constructed in \cite{BK} can be described in terms of the Abel-Jacobi map of certain open varieties.  
 
  The problem of describing the Abel-Jacobi maps for higher Chow cycles has been considered by other authors too. This paper
is partly inspired by \cite{KLM} and \cite{KL}. Our description of the Abel-Jacobi map can be regarded as a generalization 
of the {\it geometric interpretation} in \S 5.8 of \cite{KLM} to quasi-projective varieties.  Even in the case of projective varieties, 
our description is somewhat different from the one given in \cite{KLM}.

\section{The complex of admissible chains}

Let $k$ be a non-negative integer. A $k$-simplex in an Euclidean 
space $\R^n$ is the convex hull of 
affinely independent points $a_0,\cdots,
a_k$  in $\R^n$.
{\it A finite simplicial complex} of $\R^n$ is a finite set $K$ consisting of  simplexes 
 such that (1) for all $s\in K$, all the faces of $s$
belong to $K$,  (2) for all $s,t\in K$, $s\cap t$ is either the empty set
or a common face of $s$ and $t$.
We denote by $K_p$ the set of $p$-simplexes
of $K$.
For a finite simplicial complex $K$, the union of  simplexes
in $K$ as a subset of $\R^n$ is denoted by $|K|$.

As for the definition of semi-algebraic set and their fundamental
properties, see \cite{BCR}.
\begin{theorem}[\cite{BCR}, Theorem 9.2.1]
\label{thm:semi-algebraic triangulation}
Let $P$ be a compact semi-algebraic subset of $\R^m$. The set $P$ is triangulable, i.e. there exists
a finite simplicial complex $K$ and a semi-algebraic homeomorphism
$\Phi_K:\,|K|\to P$. Moreover, for a given finite family 
$S=\{S_j\}_{j=1,\cdots,q}$ of semi-algebraic
subsets of $P$, we can choose a finite simplicial complex $K$
and a semi-algebraic homeomorphism
$\Phi_K:\,|K|\to P$ such that every $S_j$ is the
union of a subset of $\{\Phi_K(\sigma^\circ)\}_{\sigma\in K}.$
Here $\si^\circ$ is the interior of $\si$, which is the complement of the union of all proper faces
of $\si$.
\end{theorem}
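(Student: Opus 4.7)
The plan is to argue by induction on the ambient dimension $m$. The case $m=0$ is trivial, and for $m=1$ a compact semi-algebraic subset of $\R$ is a finite union of points and closed intervals; one triangulates by taking the vertex set to consist of all interval endpoints together with the extra points needed so that every $S_j$ becomes a union of open simplices.

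For the inductive step $m \ge 2$, I would first perform a generic linear change of coordinates so that the projection $\pi:\R^m\to\R^{m-1}$ onto the first $m-1$ coordinates is a \emph{good direction} for $P$ and the family $\{S_j\}$: this means $\pi$ is proper on $P$, and after passing to a cylindrical algebraic decomposition of $\R^m$ adapted to $\{P,S_1,\dots,S_q\}$, every cell is expressible as the graph of a continuous semi-algebraic function $\xi_i(x')$ over a base cell $B\subset\R^{m-1}$, or as the open band between two consecutive such graphs. The existence of such a direction is standard: the set of bad directions is a proper semi-algebraic subset of the sphere, so a generic choice works. Next, enlarge the family in $\R^{m-1}$ to include the projections $\pi(S_j)$ together with all base cells $B$, and apply the induction hypothesis. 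This yields a finite simplicial complex $L$ together with a semi-algebraic homeomorphism $\Psi_L:|L|\to\pi(P)$ such that each open simplex $\Psi_L(\si^\circ)$ lies in exactly one base cell $B_\si$.

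The next step is to lift this triangulation to $P$. Over each simplex $\si$ of $L$ lying over a base cell $B$, list the continuous semi-algebraic functions $\xi_1<\xi_2<\cdots<\xi_r$ on $B$ whose graphs and the bands in between stratify $\pi^{-1}(B)\cap P$. Lift $\si$ along each graph via $x\mapsto(\Psi_L(x),\xi_i(\Psi_L(x)))$ to obtain horizontal simplices in $P$, and triangulate each ``prism'' $\si\times[i,i+1]$ by the standard product-of-simplices decomposition (fixing a linear ordering of the vertices of $\si$), mapping it into $P$ via the affine interpolation $(x,t)\mapsto(\Psi_L(x),(1-s)\xi_i(\Psi_L(x))+s\,\xi_{i+1}(\Psi_L(x)))$ with $s=t-i$. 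One then checks that the assembled complex $K$ is simplicial and that the assembled map $\Phi_K:|K|\to P$ is a semi-algebraic homeomorphism; compatibility with each $S_j$ is then automatic from the CAD adaptation.

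The main obstacle is ensuring that the lifting glues consistently across faces of $L$: when two simplices $\si_1,\si_2$ of $L$ share a face but lie over distinct base cells $B_1,B_2$, the stacks of functions $\xi_i$ over $B_1$ and over $B_2$ can coalesce or split as one crosses the common boundary, so the counts $r_1$ and $r_2$ need not agree. Controlling this forces one, before invoking the inductive hypothesis, to enlarge the family in $\R^{m-1}$ by the projections of the ``coincidence sets'' where two graphs meet, and to refine $L$ accordingly so that within the interior of each $\si$ the combinatorics of the stacks are constant and on each face of $\si$ they degenerate in a prescribed, compatible way. Once this bookkeeping is in place, the fixed linear ordering on vertices ensures that the product triangulations of adjacent prisms match along shared faces, and the resulting global map is a semi-algebraic homeomorphism by construction.
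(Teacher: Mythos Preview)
The paper does not prove this statement at all: Theorem~\ref{thm:semi-algebraic triangulation} is quoted verbatim from \cite{BCR}, Theorem 9.2.1, and is used as a black box throughout. So there is no ``paper's own proof'' to compare against.

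That said, your sketch is the standard approach and is essentially the argument given in \cite{BCR}: induction on the ambient dimension, a generic projection to $\R^{m-1}$, a cylindrical decomposition adapted to $P$ and the $S_j$, an application of the inductive hypothesis to the base, and then a lifting of the base triangulation via graphs and bands. Your identification of the main obstacle---consistency of the lifted triangulation across faces where the number of sheets changes---is exactly the point where the formal proof requires care; in \cite{BCR} this is handled by enlarging the base family to include the loci where two section functions coincide (so the combinatorics of the stack are constant over each open base simplex and degenerate compatibly on faces) and by fixing a vertex ordering to make the prism triangulations coherent. Your sketch names all of these ingredients but does not carry out the bookkeeping; as a proof outline it is correct, but for a complete argument you would need to verify in detail that (i) the enlarged base family is still finite and semi-algebraic, (ii) the lifted map is injective and continuous with semi-algebraic inverse, and (iii) the resulting $K$ is genuinely a simplicial complex (no two lifted simplices intersect except along a common face). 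None of this is conceptually new beyond what you have written, but it is where the work lies.
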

\begin{remark}
\label{facewise regular embedding}
\begin{enumerate}

\item By  \cite{BCR} Remark 9.2.3 (a), the map $\Phi_K$ can be taken so that the map 
$\Phi_K$ is {\it facewise regular embedding} i.e. for each $\sigma\in K$, 
$\Phi_K(\sigma^\circ)$  is a  regular
submanifold of $\R^m$. 

\item The pair $(K, \Phi_K)$ 
as in Theorem \ref{thm:semi-algebraic triangulation}
is called a 
{\it semi-algebraic triangulation}\index{semi-algebraic triangulation} of $P$; we will then identify $|K|$ with $P$. 
A projective real or complex variety $V$ is regarded  as a compact semi-algebraic subset 
of an Euclidean space by \cite{BCR} Theorem 3.4.4, thus the above theorem applies to $V$. 
\end{enumerate}
\end{remark}
We recall some terminology of piecewise-linear topology. Let $K$ be a simplicial complex
and $L$ be a subcomplex of $K$. 
$L$ is a {\it full subcomplex} of $K$, 
if all the vertices of a simplex $\si$ in $K$ belong to $L$, then $\si$ belongs to $L$.
a {\it derived subdivision of $K$ {\rm mod} $L$} is obtained by
starring each simplex of $K$ not contained in $L$. See for example \cite{RS} page 20 for
more detail. If we star each simplex not contained in $L$ at its barycenter, we obtain the {\it barycentric subdivision of
$K$ {\rm mod} $L$}, which is denoted by $\text{sd}K$ mod $L$. The {\it simplicial neighborhood of $L$ in $K$}, denoted by
$N(L,K)$, is defined to be $\{\si\in K|\,\exists \eta,\,\si\text{ is a face of } \eta \text{ and } \eta\cap L
\neq \emptyset\}$. Also the {\it simplicial complement of $L$ in $K$}, denoted by  $C(L,K)$, is defined to be $\{\si\in K|\, \si\cap L=\emptyset\}.$ The intersection $N(L,K)\cap
C(L,K)$ is denoted by $\dot{N}(L,K)$. Suppose that 
$L$ is a full subcomplex of $K$, and let $K'$ be a derived subdivision of $K$ mod $L\cup 
C(L,K)$. The polytope $|N(L,K')|$ is said to be a {\it regular neighborhood} of $|L|$ in $|K|$.

\begin{notation}
\label{loose notation}
Let  $K$ be a simplicial complex. 
For a subcomplex $L$ of $K$,  the 
space $|L|$ is a subspace of $|K|$.  
A subset of $|K|$ of the form
$|L|$ is also called a subcomplex. If a subset $S$ of $|K|$ is equal to $|M|$ for a subcomplex $M$
of $K$, then $M$ is often denoted by $K\cap S$.
For two simplexes $\si$ and $\eta$ we denote  $\si \prec \eta$ if $\si$ is a face of $\eta$. 
For a complex $K$ and its subcomplex $L$, by writing $L\lhd K$ we mean that $L$
is a full subcomplex of $K$.  

\end{notation}

Let $X$ be a smooth projective variety over $\CC$ of dimension $m$, $D$ be a closed subset of $X$ and 
${\bf H}=H_0\cup\cdots \cup H_t$ be a strict 
normal crossing divisor on $X$. We write $U=X-D$. The {\it faces} of $X$ are intersections of several $H_j$'s. 
For a subset $I$ of $T:=\{0,\cdots, t\}$, we write the face $\underset{i\in I}\cap H_i$
by $H_I$.
In the following we suppose that each triangulation $K$ of  $X$ is semi-algebraic , and that the subset
$ D$ is a subcomplex of $K$.  
Let $K$ be a  triangulation of $X$. We denote by $C_\bullet(K;\ZZ)$ resp. $C_\bullet(K,D;\ZZ)$ 
the chain complex resp. the relative chain 
complex of $K$. An element of $C_p(K;\ZZ)$ for $p\geq 0$ is written as $\sum a_\si \si$
where the sum is taken over $p$-simplexes of $K$. By doing so, it is agreed upon that an orientation
has been chosen for each $\si$.  By abuse of notation, an element of $C_\bullet(K,D;\ZZ)$ 
is often described similarly. 

\begin{definition}
 For an element $\gamma=\sum a_\si \sigma$ of
$C_{p}(K;\ZZ)$, we define
the support\index{support} $|\gamma|$ of $\gamma$ as the subset of $|K|$
 given by
\begin{equation}
\label{support}
|\gamma|=\bigcup_{
\substack{\sigma\in K_p\\ a_\sigma\neq 0}} \sigma.
\end{equation}
\end{definition}
For
an element $\gamma=\sum a_\si \sigma$ of
$C_{p}(K;\ZZ)$, 
$|\gamma|$ is sometimes regarded as a subcomplex of $K$.

\begin{definition}
\label{def:semi-alg current}
Let $p \geq 0$ be an integer.
\begin{enumerate}

\item(Admissibility) A semi-algebraic subset $S$ of $X$
is 
said to be admissible\index{admissible} if for each face $H$, 
the inequality 
$$
\dim(S\cap (H-D))\leq \dim S -2\,\codim H
$$
holds.  Here note that $\dim(S\cap (H- D))$ and $\dim S$ are the dimensions as semi-algebraic sets,
and $\codim H$ means the codimension of the subvariety $H$ of $X$.

\item Let $\gamma$ be an element of $C_p(K,D; \ZZ)$.
Then $\ga$ is 
said to be admissible if 
the support of a representative of $\ga$ in $C_p(K;\ZZ)$ is admissible. This condition is independent of the choice
of a representative. 
\item We set 
$$
AC_{p}(K,D;\ZZ)
=\{\gamma\in C_{p}(K,D;\ZZ))
\mid 
\gamma \text{ and } \delta \gamma \text{ are admissible }\}.
$$
\index{$AC_{p}(K,D;\Zz^{q})$}
We call an element of $AC_{p}(K,D;\ZZ)$ a $\delta$-admissible chain.
\end{enumerate}
\end{definition}

\subsection{Subdivision and inductive limit}

\begin{definition}
Let $(K,\,\Phi_K:\,|K|\to P)$ be a  triangulation of  a compact semi-algebraic set $P$. Another 
triangulation $(K',\,\Phi_{K'}:\,|K'|\to P)$ is {\rm a subdivision of}\index{subdivision} $K$ if :
\begin{enumerate}
\item The image of each simplex
of $K'$ under the map $\Phi_{K'}$ is contained in  the image of a simplex of $K$
under the map $\Phi_K$.
\item The image of each simplex of $K$ under the map $\Phi_K$ is the union of the images of
simplexes of $K'$ under $\Phi_{K'}$.
\end{enumerate}
\end{definition}
If $K'$ is a  subdivision of a  triangulation $K$, there is 
a natural homomorphism of complexes
$\lambda:
C_{\bullet}(K,D;\ZZ)\to 
C_{\bullet}(K',D;\ZZ)$ called the subdivision operator. See for example  
 \cite{Mu} Theorem 17.3 for the definition. For a simplex $\si$ of $K$, the chain $\la(\si)$ is carried
by $K'\cap \si$, and so that the map $\la$ sends $AC_{\bullet}(K,D;\ZZ)$ to 
$AC_{\bullet}(K',D;\ZZ)$. 
By Theorem \ref{thm:semi-algebraic triangulation} two  semi-algebraic triangulations
have a common subdivision. 
Since the map $\lambda$
and the differential $\delta$ commute, the complexes 
$C_{\bullet}(K,D;\ZZ)$ and 
$AC_{\bullet}(K,D;\ZZ)$ form
inductive systems indexed by triangulations $K$ of $X$.
\begin{definition}
\label{def:definition of AC with inductive limit}
We set
\begin{align*}
C_{\bullet}(X,D;\ZZ) =
\underset{\underset{K}\longrightarrow}{\lim } \ 
C_{\bullet}(K,D;\ZZ), \quad
AC_{\bullet}(X,D;\ZZ) =
\underset{\underset{K}\longrightarrow}{\lim } \ 
AC_{\bullet}(K,D;\ZZ).
\end{align*}
Here the limit is taken on the directed set of  triangulations.

\end{definition}
\index{$C_{\bullet}(X,\bold D;\Zz^{\bullet})$}
\index{$AC_{\bullet}(X,\bold D;\Zz^{\bullet})$}


A proof of the following Proposition is given in \cite{part II} Appendix A.
\begin{proposition}[Moving lemma] 
\label{prop: moving lemma}
The inclusion of complexes
\begin{equation}
\label{moving quasi-iso}
\iota:\,\,AC_{\bullet}(X, D; \ZZ) 
\to C_{\bullet}(X,D; \ZZ)
\end{equation}
is a quasi-isomorphism.
\end{proposition}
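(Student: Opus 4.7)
The strategy is to build, for each sufficiently fine semi-algebraic triangulation $K$ of $X$, an explicit chain-homotopy operator $h\colon C_\bullet(K,D;\ZZ)\to C_{\bullet+1}(K',D;\ZZ)$, valued in some refinement $K'$ of $K$, with the property that $\gamma-\delta h\gamma-h\delta\gamma$ lies in $AC_\bullet(K',D;\ZZ)$. Taking the colimit over triangulations, this inverts $\iota$ up to chain homotopy and hence furnishes both injectivity and surjectivity of $\iota_*$ on $H_\bullet$. By Theorem~\ref{thm:semi-algebraic triangulation} I may refine $K$ at the outset so that $D$ and every face $H_I$ is already a subcomplex, reducing the problem to that of moving an arbitrary chain into general position with respect to the strata $H_I$.

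The building block is local. For a single closed simplex $\sigma\in K$, I choose a small semi-algebraic regular neighborhood of $\sigma$ in $X$ and, inside it, a one-parameter semi-algebraic isotopy $h_t^\sigma\colon\sigma\hookrightarrow X$ with $h_0^\sigma=\id_\sigma$, such that both the trace $\sigma\times[0,1]\to X$ and the endpoint $h_1^\sigma(\sigma)$ meet every face $H_I$ in real dimension at most $\dim\sigma+1-2\codim H_I$ and $\dim\sigma-2\codim H_I$, respectively. This is the semi-algebraic avatar of the classical transversality theorem: parametrizing such isotopies by a semi-algebraic family, the locus where either inequality fails has strictly smaller dimension in the parameter space, so a generic choice works. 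A semi-algebraic triangulation of the prism $\sigma\times[0,1]$ compatible with $\mathbf{H}$ then converts the trace into a simplicial $(\dim\sigma+1)$-chain $H^\sigma$ whose boundary recovers $h_1^\sigma(\sigma)-\sigma$ together with the trace of $\delta\sigma$.

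The main obstacle, and the heart of the proof, is the globalization. The $h^\sigma$ must be chosen compatibly so as to agree on common faces $\sigma\cap\sigma'$, and the isotopy of any one simplex must not spoil the admissibility already achieved for the adjacent ones. I handle both issues by constructing $h$ inductively over the skeleton of $K$, starting from the $0$-skeleton: at each step the restriction of $h$ to $\delta\sigma$ is already fixed, and one extends it to $\sigma$ by a generic semi-algebraic extension of the boundary isotopy. Because the normal-crossing stratification is locally a product along each stratum, the admissibility conditions with respect to all the finitely many faces $H_I$ simultaneously cut out an open and semi-algebraically dense subset of the space of such extensions, so the induction closes and a global $h$ with the desired property exists on $K$ after one further subdivision.

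Applying $\id-\delta h-h\delta$ to a cycle $\gamma\in C_\bullet(K,D;\ZZ)$ produces a homologous admissible cycle, which gives surjectivity of $\iota_*$. Injectivity is obtained by the same construction performed relative to a prescribed admissible boundary: given $\gamma\in AC_p$ with $\gamma=\delta\tau$ for some $\tau\in C_{p+1}$, one deforms $\tau$ to an admissible $\tau'$ while keeping the boundary $\gamma$ fixed throughout. Passing to the colimit over triangulations in Definition~\ref{def:definition of AC with inductive limit} then completes the proof.
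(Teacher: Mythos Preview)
Your outline is correct and would yield a proof, though the semi-algebraic transversality step is only sketched. The paper itself does not prove this proposition---it refers to Appendix~A of \cite{part II}---but the intended method can be read off from the closely related argument in the proof of Proposition~\ref{resolution}. There the single tool is Zeeman's PL general-position theorem (quoted in the paper as Theorem~\ref{Zeeman moving}): for subpolyhedra $X_0\subset\mathcal{X}$ and $H$ of a compact PL manifold $M$ with $\mathcal{X}-X_0\subset\mathring{M}$, there is an \emph{ambient} PL isotopy of $M$ fixing $X_0$ and $\dot M$ and carrying $\mathcal{X}-X_0$ into general position with respect to $H$. One applies it with $M=X$, $H$ the union of faces, $\mathcal{X}=|\gamma|\cup D$ and $X_0=D$ for surjectivity, respectively $\mathcal{X}=|\tau|\cup|\gamma|\cup D$ and $X_0=|\gamma|\cup D$ for injectivity; the prism of the ambient isotopy is your $h$, and the endpoint is the admissible representative.

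The difference is one of packaging rather than substance. You build the isotopy simplex by simplex over the skeleta and must argue at each inductive step that the set of admissible extensions with prescribed boundary is nonempty and semi-algebraically generic; the paper's route outsources exactly this induction to Zeeman's theorem, whose proof performs the same skeleton-wise extension. Your approach is more self-contained but requires you to make precise (i) the semi-algebraic family of local isotopies you are parametrizing and (ii) why the bad locus in that family has strictly smaller dimension---points you currently assert by analogy with smooth transversality. The paper's approach is shorter but leans on a nontrivial black box. Your identification of the globalization step (compatibility across shared faces without spoiling admissibility already achieved) as the crux is exactly right, and is precisely what the ambient nature of Zeeman's isotopy handles for free.
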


\begin{definition}[Good triangulation]
\label{good triangulation}
We define a family $\cal L$ of subsets of $X$ by
$$
\cal L=\{H_{I_1}\cup \cdots\cup H_{I_k}\}_{(I_1, \dots, I_k)},
$$ 
where $H_{I_j}$ are  faces of $X$. In short, a member of $\cal L$ is the union of several
 faces.
A finite semi-algebraic triangulation $K$ of $X$ is called a good triangulation \index{good triangulation} if $K$
satisfies the following conditions.
\begin{enumerate}
\item The divisor $D$ is a subcomplex of $K$.
\item The map $\Phi_K:\,|K|\to X$ is facewise regular embedding. cf. Remark \ref{facewise regular embedding}.
\item 
\label{faces are full subcomplex}
Each element $L_i\in \cal L$ is a full subcomplex of $K$ i.e.  there exists a full subcomplex $M_i$ of $K$ such that $L_i=|M_i|$. 
\end{enumerate}

\end{definition}
In particular, if $K$ is a good triangulation,
then for any simplex $\si$ of 
$K$ and $L_i\in \cal L$,
the intersection $\si\cap  L_i$ is a (simplicial) face of $\si$. This is the primary reason to consider the condition (3).

\begin{remark} 
\label{barycentric subdiv is good}
If $L$ is a subcomplex
of $K$, then ${\rm sd} L$ is a full subcomplex of ${\rm sd} K$ (See for example Exercise 3.2 of \cite{RS}).   It follows that 
if  $K$ is a semi-algebraic triangulation of 
$X$ which  is a facewise regular embedding, and such that
$D$ and each $L_i\in \cal L$ are  subcomplexes of $K$, then ${\rm sd} K$ is a good triangulation.
\end{remark}

In the following, each triangulation $K$ of $X$ is assumed to be good in the sense of Definition
\ref{good triangulation}.

\subsection{The cap product with a Thom cocycle}
\label{subsec:cap prod and thom cocycle}
\subsubsection{Simplicial cap product}
\begin{definition}[Ordering of complex, 
good ordering]
\label{def:good simplical ordering}
Let  $K$ be a good triangulation of $X$. 
\begin{enumerate}
\item
A partial ordering on the set of vertices in $K$ is called 
an ordering\index{ordering} of $K$,
if the
     restriction of the ordering to each simplex is a total ordering.
\item  
Let $L$ be a subcomplex of $K$.
An ordering of $K$ is said to be good with respect to $L$ if it satisfies the following condition.
If  a vertex $v$ is on $L$  and  $w\geq v$ for a vertex $w$, then  $w \in L$. 
\index{good ordering}
\end{enumerate}
\end{definition}

We denote by $[a_0,\cdots, a_k]$\index{$[a_0,\cdots, a_k]$} the simplex spanned by $a_0,\cdots, a_k$.
Let $\co$ be a good ordering of $K$ with respect to a subcomplex $L$.
We recall the definition of the cap product\index{cap product}
\newline $\overset{\co}\cap:C^p(K)\otimes C_k(K) \to C_{k-p}(K)$. 
For a simplex $\al=[v_0,\cdots,v_k]$ such that $v_0< \cdots <v_k$ and $u\in C^p(K)$, we define 
\begin{equation}
\label{def:simplicial cap product}
u\overset{\co}\cap \alpha=u([v_0, \dots, v_p])[v_{p},\dots, v_k].
\end{equation}
 One has the boundary formula
\begin{equation}
\label{boundary formula}
\delta(u\overset{\co}\cap \alpha)=(-1)^p(u\overset{\co}\cap (\delta\alpha)
-(du)\overset{\co}\cap \alpha)
\end{equation}
where $du$ denotes the coboundary of $u$, see \cite{Hat}, p.239 (note the difference
in sign convention from \cite{Mu}). 
Thus if $u$ is a cocycle, $\delta(u\overset{\co}\cap \alpha)
=(-1)^pu\overset{\co}\cap (\delta\alpha).$

\subsubsection{Cohomology class of a subvariety}

For a subvariety $V$ of $X$ of codimension $p$,  there exists a cohomology class $cl(V)\in H^{2p}_V(X, \ZZ)$.
In the case of simplicial cohomology, it is described as follows. Let $K$ be a triangulation of $X$ such that 
there exists a full subcomplex $L$ of $K$ with $V=|L|$. Let $K'$ be a derived subdivision of $K$ mod
$L\cup C(L,K)$. Set $N=N(L,K')$ and $C=C(L,K')$. In this situation the cohomology group $H^{2p}_V(X,\ZZ)$
is equal to $H^{2p}(C^*(K,C;\ZZ))$, and by Lefschetz duality Theorem 3.43 \cite{Hat}  we have an isomorphism
\[\cap\eta_X:\,H^{2p}(C^*(K,C;\ZZ))\to H_{2(m-p)}(C_*(N;\ZZ)).\]
Here $\eta_X$ is the fundamental cycle of $X$. The element $cl(V)$ is the one such that
$cl(V)\cap \eta_X $ equals the homology class of the cycle $V$. A cocycle in $C^{2p}(K,C;\ZZ)$ which
represents $cl(V)$ is called a Thom cocycle of $V$, and is denoted by $T_V$.

\begin{proposition}
\label{basic property of face map 1}
Let $V$ be a subvariety of $X$ of codimension $p$, $K$ a triangulation of $X$ for which $V=|L|$ for a full subcomplex $L$.
Let $K'$ be a derived subdivision of $K$ mod $L\cup C(L,K)$,  $T_V\in C^{2p}(K, C(L,K');\ZZ)$ be a Thom cocycle of $V$  and $\co$ a good ordering of $K'$ with respect to $L$.
\begin{enumerate}

\item The map $T_V\overset{\co}{\cap}$ 
and the topological differential $\delta$ commute. 
\item 
The image of the homomorphism 
$T_V\overset{\co}{\cap} $ is contained in $C_{k-2p}(L;\ZZ)$,
 As a consequence, we have a homomorphism of
 complexes
\begin{equation}
\label{cap prod no cond}
T_V\overset{\co}{\cap} :\,\,C_k(K', D; 
\ZZ) \to
C_{k-2p}(L, L\cap D; \ZZ).
\end{equation}
\end{enumerate}
\end{proposition}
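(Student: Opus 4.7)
The plan is to prove both parts directly from the simplicial cap product formula \eqref{def:simplicial cap product} together with two ingredients: the cocycle property of $T_V$ for part (1), and the combinatorial interplay between the good ordering and the support condition on $T_V$ for part (2).

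For part (1), I would plug $u = T_V$ into the boundary formula \eqref{boundary formula}. Since $T_V$ is a Thom cocycle we have $dT_V = 0$, and since $2p$ is even the sign $(-1)^{2p}$ is $+1$. The boundary formula then collapses to
\[
\delta(T_V \overset{\co}{\cap} \alpha) = T_V \overset{\co}{\cap} \delta\alpha,
\]
which is the required commutation. This step is formal.

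For part (2), the real content, I would evaluate $T_V \overset{\co}{\cap} \alpha$ on an ordered simplex $\alpha = [v_0,\dots,v_k]$ with $v_0 < \dots < v_k$. By the definition the result is $T_V([v_0,\dots,v_{2p}]) \cdot [v_{2p},\dots,v_k]$. If the coefficient vanishes there is nothing to check, so assume it is nonzero. Because $T_V$ lies in $C^{2p}(K',C(L,K');\ZZ)$, its value on $[v_0,\dots,v_{2p}]$ can only be nonzero when this face is \emph{not} in $C(L,K')$, i.e.\ when $[v_0,\dots,v_{2p}] \cap |L| \neq \emptyset$. Since $L$ is a full subcomplex, this forces at least one vertex $v_i$ with $0 \le i \le 2p$ to lie in $L$. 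Now the key step: the good ordering property says that any vertex $\geq v_i$ is again in $L$, so in particular $v_{2p},v_{2p+1},\dots,v_k \in L$. Fullness of $L$ then guarantees that the whole simplex $[v_{2p},\dots,v_k]$ belongs to $L$, establishing that the image is contained in $C_{k-2p}(L;\ZZ)$.

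Finally, to obtain the relative statement, I would observe that if $\alpha$ is a simplex supported in $D$ then all of its vertices lie in $D$; combined with the previous paragraph, $v_{2p},\dots,v_k$ lie in $L \cap D$, whence by fullness $[v_{2p},\dots,v_k] \subset L \cap D$. This shows the cap map sends $C_k(K' \cap D;\ZZ)$ into $C_{k-2p}(L \cap D;\ZZ)$, so it descends to the relative chain complexes. Combined with part (1), this yields the asserted map of complexes \eqref{cap prod no cond}. I expect the main obstacle, and the only nonformal point, to be the argument that forces $v_i \in L$ for some $i \le 2p$ from the support condition on $T_V$ and then propagates membership in $L$ upward via the good ordering; all other steps are bookkeeping with the definitions.
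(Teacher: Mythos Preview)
Your proof is correct and follows essentially the same approach as the paper. The paper phrases part (2) as a dichotomy on the single vertex $v_{2p}$ (if $v_{2p}\notin L$ then, by the good ordering, none of $v_0,\dots,v_{2p}$ lie in $L$, so the front face is in $C(L,K')$ and $T_V$ vanishes on it; if $v_{2p}\in L$ then the good ordering and fullness give $[v_{2p},\dots,v_k]\subset L$), which is the contrapositive of your direct argument, and the paper leaves the relative statement implicit while you spell it out.
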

\begin{proof}
(1). Since $T_V$ is a cocycle of even degree, we have
$\delta(T_V\overset{\co}\cap \sigma)=T_V\overset{\co}\cap (\delta\sigma)$ for a simplex $\si$.

\noindent (2). Let $\si=[v_0,\cdots, v_n]\in C_n(K')$ with $v_0<\cdots <v_n.$ If $v_{2p} \notin   V$, then $[v_0,\cdots,v_{2p}] \cap  V=\emptyset$ and
 $T_V([v_0,\cdots,v_{2p}])=0$ since the cochain $T$ vanishes on $C(L,K')$.
If $v_{2p} \in L$, then the vertices $v_{2p},\cdots, v_{n}$ are on $  L$, and we have $[v_{2p},\dots, v_n]\subset  L$
since $  L\lhd K$.
Thus the assertion holds.
\end{proof}

\subsubsection{Independence of $T_V$ and  ordering}
\label{independece of c}

\begin{proposition}
\label{prop face map first properties}
Let $K$ be a good triangulation of $X$ and $H$ be a codimension $p$ face of $X$. Set $L=K\cap H$. 
Let $T_H\in C^{2p}(K', C(L,K');\ZZ)$ be a Thom cocycle of $H$, $\co$ a good ordering of $K'$ with respect to $L$,
and  $\gamma$ be an element of $AC_{k}(K',D;\ZZ)$.
Then we have the following. 
\begin{enumerate}
\item
The chain  $T_H\overset{\co}{\cap} \gamma$
is an element in $AC_{k-2p}(L,L\cap D; \ZZ)$.
\item
\label{indep of T for face map}
The chain $T_H\overset{\co}{\cap} \gamma$ is
independent of the choice of a 
Thom cocycle $T_H$ and a good ordering $\co$. 
Thus the map 
$$
T_H\overset{\co}\cap:
AC_k(K, D; 
\ZZ) \to
AC_{k-2p}(L, L\cap D; \ZZ).
$$
induced by (\ref{cap prod no cond})
 is denoted by $T_H\cap$.
\item
\label{comatibility for subdivision}
Let $M$ be a good subdivision of $K$.
Let $M'$ be a derived subdivision of $M$ mod $(M\cap H)\cup C(M\cap H, M)$ and $T'_H
\in C^{(2p)}(M', C(M\cap H, M');\ZZ)$ be a Thom cocycle of $H$. Then we have the following 
commutative diagram.
\begin{equation}
\label{commutative diagram for subdivision}
\begin{matrix}
AC_k(K,D;\ZZ)
&\xrightarrow{T_H\cap}&
    AC_{k-2p}(L,L\cap D; \ZZ) 
\\
\lambda \downarrow & & \downarrow\lambda
\\
AC_k(M',D;\ZZ)
&\xrightarrow{T'_H\cap}&
    AC_{k-2p}(M'\cap H,M'\cap (D\cap H); \ZZ)
\end{matrix}
 \end{equation}
where the vertical maps $\lambda$ are subdivision operators.
\end{enumerate}
\end{proposition}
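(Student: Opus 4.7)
The plan is to handle (1), (2), (3) in order, with the central technical input being a vanishing lemma for cap products of lower-degree cochains on admissible simplexes.

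For (1), Proposition \ref{basic property of face map 1}(2) gives $|T_H\overset{\co}{\cap}\gamma|\subset|\gamma|\cap H$. Writing $H=H_I$ and denoting by $H_{I\cup J'}$ (with $J'\subset T\setminus I$) the faces of $H$ induced by the restricted normal crossing divisor, the admissibility of $\gamma$ yields
\[\dim\bigl(|T_H\overset{\co}{\cap}\gamma|\cap(H_{I\cup J'}-D)\bigr)\leq\dim\bigl(|\gamma|\cap(H_{I\cup J'}-D)\bigr)\leq k-2|I\cup J'|=(k-2p)-2|J'|,\]
which is exactly the admissibility bound for a $(k-2p)$-chain in $H$. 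Since $\delta(T_H\overset{\co}{\cap}\gamma)=T_H\overset{\co}{\cap}\delta\gamma$ by Proposition \ref{basic property of face map 1}(1) and $\delta\gamma$ is admissible, the same estimate applied to $\delta\gamma$ gives admissibility of the boundary.

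For (2), my key step is the following vanishing lemma: \emph{for any $u\in C^{2p-1}(K',C(L,K');\ZZ)$, any good ordering $\co$, and any admissible $k$-simplex $\sigma=[v_0,\dots,v_k]$, one has $u\overset{\co}{\cap}\sigma=0$.} Indeed, admissibility forces $\dim(\sigma\cap H)\leq k-2p$; since $L$ is a full subcomplex of $K'$, $\sigma\cap H$ is the simplicial face of $\sigma$ spanned by its $L$-vertices, so $\sigma$ has at most $k-2p+1$ such vertices; the good ordering forces these into an upper segment, whence $v_{2p-1}\notin L$, so $[v_0,\dots,v_{2p-1}]\in C(L,K')$ and $u$ vanishes on it. Applied to each simplex of $\gamma$ and of $\delta\gamma$ (both admissible), the lemma gives $u\overset{\co}{\cap}\gamma=u\overset{\co}{\cap}\delta\gamma=0$. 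For two Thom cocycles with $T_H-T_H'=du$, the boundary formula \eqref{boundary formula} yields
\[T_H\overset{\co}{\cap}\gamma-T_H'\overset{\co}{\cap}\gamma=\delta(u\overset{\co}{\cap}\gamma)+u\overset{\co}{\cap}\delta\gamma=0,\]
so the chain is independent of the Thom cocycle. For independence from the good ordering I would construct a standard simplicial chain homotopy of degree $2p-1$ between $T_H\overset{\co}{\cap}$ and $T_H\overset{\co'}{\cap}$ (decomposing if necessary into swaps of adjacent vertices); its formula on a $k$-simplex involves evaluating a cochain on a front face of dimension $2p-1$, and the same admissibility argument shows the homotopy vanishes on $AC_k$.

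For (3), the subdivision operator $\lambda$ commutes with $\delta$ and, at the chain level, with the cap product when the cochain on $M'$ is the subdivision pullback $\lambda^{*}T_H$, which is itself a Thom cocycle for $H$ in $M'$ since subdivision preserves its cohomology class. Part (2) then gives $\lambda^{*}T_H\overset{\co'}{\cap}\lambda(\gamma)=T_H'\overset{\co'}{\cap}\lambda(\gamma)$, so $\lambda(T_H\overset{\co}{\cap}\gamma)=T_H'\overset{\co'}{\cap}\lambda(\gamma)$ and the diagram commutes. The main obstacle is the good-ordering portion of (2): constructing a chain homotopy whose vanishing on admissible chains mirrors the Thom-cocycle coboundary argument requires delicate combinatorial sign bookkeeping, whereas the Thom-cocycle case reduces cleanly to the single vanishing lemma.
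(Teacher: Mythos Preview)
Your argument for (1) is correct and coincides with the paper's. For (2) and (3) the paper defers entirely to \cite{part II}, Sections B.2 and B.3, so there is no in-paper proof to compare against; I comment on your argument on its own terms.

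Your vanishing lemma for (2) is the right idea and the Thom-cocycle independence follows from it as you wrote. One small repair: your dimension bound uses $\dim(\sigma\cap H)\le k-2p$, but admissibility only gives $\dim(\sigma\cap(H-D))\le k-2p$. If $\sigma\cap H\subset D$ the count of $L$-vertices can fail; however in that case $[v_{2p-1},\dots,v_k]\subset\sigma\cap H\subset D$, so $u\overset{\co}{\cap}\sigma=0$ in $C_{k-2p+1}(K',D)$ anyway. With this remark the argument is complete. The ordering independence you leave as a sketch; the homotopy you allude to exists (e.g.\ via acyclic models or an explicit prism-type formula for adjacent transpositions), and your observation that its terms involve front $(2p-1)$-faces is exactly what makes the same vanishing lemma apply, but you have not actually written it down.

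The genuine gap is in (3). There is no ``subdivision pullback'' $\lambda^{*}T_H\in C^{2p}(M')$: the subdivision operator on chains goes $C_*(K')\to C_*(M')$, so its dual on cochains goes the other way, and there is no canonical chain-level map $C^*(K')\to C^*(M')$. More seriously, $M'$ need not even be a subdivision of $K'$ (both are subdivisions of $K$, via different routes), so the sentence ``$\lambda$ commutes at the chain level with the cap product when the cochain on $M'$ is $\lambda^{*}T_H$'' has no meaning as stated. What is true is that $\lambda\circ(T_H\overset{\co}{\cap})$ and $(T_H'\overset{\co'}{\cap})\circ\lambda$ are two chain maps $C_*(K,D)\to C_{*-2p}(M'\cap H,M'\cap H\cap D)$ inducing the same map on homology; to get equality on $AC_*$ you must produce a chain homotopy between them and show, simplex by simplex, that it vanishes on admissible chains by the same front-face mechanism as your vanishing lemma. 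That is the missing step, and it is of the same flavor as the ordering-independence homotopy you already flagged as the main obstacle.
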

\begin{proof} (1). 
For an element $z\in C_k (K', D;\QQ)$, 
we have $T_H\overset{\co}{\cap} z\in  C_{k-2p} (L, D;\QQ)$
by Proposition \ref{basic property of face map 1} 
(2). By the definition of the cap product, we see that 
the set $|T_H\overset{\co}{\cap} z|\subset |z|\cap  H$. It follows that if  $z$ is admissible i.e. $|z|-D$ meets all
the faces properly, then  $|T_H\overset{\co}{\cap} z|- D $ meets all  the 
faces of $  H$ properly.  Similarly, if  the chain  $\delta z$ is admissible, then  $T_H\overset{\co}{\cap}(\delta  z)$
is admissible in $  H$.  By Proposition \ref{basic property of face map 1} (1) we have the equality $\delta(T_H\overset{\co}{\cap} z)=
T_H\overset{\co}{\cap}(\delta  z)$.

(\ref{indep of T for face map}) A proof is given in \cite{part II} Section B.2.

(3)  A proof is given in \cite{part II} Section B.3.

 \end{proof}
By taking the inductive limit 
of the homomorphism
$$
T_H\cap :AC_{\bullet}(K,D;\QQ)\to
   AC_{\bullet-2p}(L, D\cap L; \QQ).
$$
for subdivisions,
we get 
a homomorphism
\begin{equation}
 \label{face map simple sheaf}
T_H\cap:\,AC_\bullet(X, D;\QQ)\to 
AC_{\bullet-2p}(H, D\cap H; \QQ).
\end{equation}

\begin{definition}
\label{face map}
The map $T_H\cap$ of (\ref{face map simple sheaf}) is denoted by $\part_H$, and called the
face map of the face $  H$.
\end{definition}
\begin{proposition}
Let $H_1$ resp. $H_2$ be a face of codimension $p_1$ resp. $p_2$
 which meet properly with each other. Set $H_{12}=H_1\cap H_2$. We have the equality
\[\part_{H_1}\part_{H_2}=\part_{H_2}\part_{H_1},\,\, AC_j(K,D;\QQ)\to AC_{j-2(p_1+p_2)}(K\cap H_{12}, D\cap H_{12}; \QQ).\]
\end{proposition}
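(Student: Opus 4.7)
The plan is to show that both compositions equal a common third map, namely the face map $\partial_{H_{12}}$ attached to the codimension-$(p_1+p_2)$ face $H_{12}=H_1\cap H_2$; this face map is well-defined since $H_{12}$ has pure codimension $p_1+p_2$ by the proper intersection hypothesis. The main tool is Proposition~\ref{prop face map first properties}(2), which asserts that $\partial_H$ is independent of the Thom cocycle and good ordering used to define it.

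I first pass to a common good subdivision $K$ in which $H_1$, $H_2$, and $H_{12}$ are all full subcomplexes, set $L_i=K\cap H_i$, take a derived subdivision $K'$, and choose Thom cocycles $T_i\in C^{2p_i}(K',C(L_i,K');\ZZ)$ for $H_i$ together with a good ordering $\mathcal{O}$ of $K'$ simultaneously with respect to $L_1$ and $L_2$. A direct comparison of the simplicial front/back-face formulas for the cup and cap products yields, for any such ordering and any ordered simplex $\sigma=[v_0,\ldots,v_k]$,
\[T_1\overset{\mathcal{O}}{\cap}\bigl(T_2\overset{\mathcal{O}}{\cap}\sigma\bigr)=(T_2\cup T_1)\overset{\mathcal{O}}{\cap}\sigma,\]
since both sides reduce to $T_2([v_0,\ldots,v_{2p_2}])\,T_1([v_{2p_2},\ldots,v_{2p_2+2p_1}])\,[v_{2p_2+2p_1},\ldots,v_k]$ (cf.\ \cite{Hat} p.239). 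Hence $\partial_{H_1}\partial_{H_2}\gamma=(T_2\cup T_1)\cap\gamma$ and $\partial_{H_2}\partial_{H_1}\gamma=(T_1\cup T_2)\cap\gamma$. Next I verify that $T_2\cup T_1$ (and, symmetrically, $T_1\cup T_2$) is a Thom cocycle for $H_{12}$: its cohomology class in $H^{2(p_1+p_2)}_{H_{12}}(X;\ZZ)$ equals $cl(H_1)\cup cl(H_2)=cl(H_{12})$ by the proper intersection formula in cohomology, while its vanishing on $C(L_{12},K')$ follows from the simultaneous goodness of $\mathcal{O}$ together with the fullness of $L_{12}$: for any $\sigma$ disjoint from $L_{12}$, one of the two evaluations factored by $T_2\cup T_1$ is forced to vanish by examining whether $v_{2p_1+2p_2}$ lies in $L_1$ and in $L_2$. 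Invoking Proposition~\ref{prop face map first properties}(2) applied to the face $H_{12}$ then identifies both cap products with $\partial_{H_{12}}\gamma$, giving $\partial_{H_1}\partial_{H_2}\gamma=\partial_{H_{12}}\gamma=\partial_{H_2}\partial_{H_1}\gamma$.

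The main obstacle is producing the simultaneously good ordering $\mathcal{O}$: a priori, a simplex of $K'$ might carry a vertex in $L_1\setminus L_{12}$ together with a vertex in $L_2\setminus L_{12}$, and on such a simplex the two goodness constraints (the $L_1$-vertex above the $L_2$-vertex and vice versa) are incompatible as total orderings. The remedy is to first replace $K$ by a sufficiently fine stellar subdivision along $L_{12}$ so that every simplex meeting both $L_1$ and $L_2$ already meets $L_{12}$; by Proposition~\ref{prop face map first properties}(3) the face maps are unaffected by this subdivision. After this reduction, placing the vertices of $L_{12}$ at the top, then the vertices of $L_1\setminus L_{12}$ and of $L_2\setminus L_{12}$ (which by construction no longer coexist in any simplex) in any consistent way, and finally the remaining vertices, furnishes the required ordering and closes the argument.
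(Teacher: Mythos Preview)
The paper does not give its own proof of this proposition; it defers entirely to \cite{part II}, Section B.4. So there is no in-paper argument to compare against, and one can only assess your proposal on its own terms.

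Your strategy---show that each composite equals $\partial_{H_{12}}$ by realising it as $(\text{cup product of Thom cocycles})\cap\gamma$ and then invoking Proposition~\ref{prop face map first properties}(\ref{indep of T for face map})---is sound and is the natural way to proceed. The verification that $T_2\cup T_1$ vanishes on $C(L_{12},K')$ goes through exactly as you indicate: if $v_{2p_2}\notin L_2$ the first factor dies, and if $v_{2p_1+2p_2}\in L_1$ while $v_{2p_2}\in L_2$ then goodness with respect to $L_2$ forces $v_{2p_1+2p_2}\in L_2$ as well, hence into $L_{12}$. Note also that simultaneous goodness with respect to $L_1$ and $L_2$ automatically gives goodness with respect to $L_{12}$, which you need when you invoke Proposition~\ref{prop face map first properties}(\ref{indep of T for face map}) for $H_{12}$.

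Two points deserve tightening. First, ``stellar subdivision along $L_{12}$'' does not achieve what you want: it leaves the simplicial complement $C(L_{12},K)$ untouched, and that is precisely where the offending simplices (meeting $L_1$ and $L_2$ but not $L_{12}$) live. A single barycentric subdivision does work: a simplex of $\sd K$ is a chain $\sigma_0<\cdots<\sigma_k$ of faces of $K$, and if $\sigma_i\in L_1$, $\sigma_j\in L_2$ with $i\le j$, then $\sigma_i$ is a face of $\sigma_j\in L_2$, so $\sigma_i\in L_1\cap L_2=L_{12}$. This then yields your claim that no simplex carries vertices in both $L_1\setminus L_{12}$ and $L_2\setminus L_{12}$, via the edge argument you sketch. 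Second, when you write $\partial_{H_1}(\partial_{H_2}\gamma)=T_1\cap(T_2\cap\gamma)$, the $T_1$ on the left is \emph{a priori} the Thom cocycle for $H_1\cap H_2$ inside $H_2$, not for $H_1$ inside $X$. You should remark that the restriction $T_1|_{L_2'}$ is such a Thom cocycle (this is where the proper-intersection hypothesis enters, via naturality of the Thom class under transverse pullback), and that the simplicial cap product with $T_1$ on a chain supported on $L_2'$ agrees with the cap product with $T_1|_{L_2'}$.
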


\begin{proof} A proof is given in \cite{part II} Section B.4.
\end{proof}

\begin{definition} 
\label{doubleAC}
Let $AC^{*,*}(U)$ be the double complex defined by
\[AC^{p,q}(U)=\underset{\sharp I=p}\oplus AC_{2m-2p-q}(H_I,D\cap H_I;\ZZ)\]
where the first differential  $\part_{\bf H}$ is defined by
\[(\part_{\bf H}\ga)_{(\al_0<\cdots<\al_p)}=\sum_{i=0}^p(-1)^i\part_{H_{\al_i}}(\ga_{(\al_0<\cdots \widehat{\al_i}
\cdots <\al_p)})\]
and the second differential is $\delta$ (the topological differential). For a finite set $I$, $\sharp I$ denotes the cardinality of $I$. 
The simple complex associated to $AC^{*,*}(U)$ is denoted by $AC^*(U,{\bf H};\ZZ)$.
\end{definition}

\begin{theorem}
\label{cohom isom}
For a non-negative integer $j$, we have an isomorphism
\[H^j(U,{\bf H};\ZZ)\simeq H^j(AC^*(U,{\bf H};\ZZ)).\]
Here cohomology on the  left hand side is the singular cohomology of $U$ relative to $\bf H$.
\end{theorem}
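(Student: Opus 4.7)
The plan is to filter the double complex $AC^{*,*}(U)$ by its first index $p$ and compare the resulting spectral sequence with the standard normal-crossing spectral sequence for $H^*(U,\mathbf{H};\ZZ)$. With this filtration $d_0 = \delta$, so
\[
E_1^{p,q} = \bigoplus_{\sharp I = p} H_{2m-2p-q}\bigl(AC_\bullet(H_I, D \cap H_I; \ZZ)\bigr).
\]
Applying the moving lemma (Proposition \ref{prop: moving lemma}) to each pair $(H_I, D\cap H_I)$, where $H_I$ is smooth projective of complex dimension $m-p$, this becomes $\bigoplus_{\sharp I=p} H_{2(m-p)-q}(H_I, D \cap H_I; \ZZ)$. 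Classical Lefschetz duality for the compact oriented manifold $H_I$ with closed subset $D \cap H_I$ then yields
\[
E_1^{p,q} \cong \bigoplus_{\sharp I = p} H^q(H_I \cap U; \ZZ),
\]
with the convention $H_\emptyset = X$, so that the $p = 0$ column is $H^q(U;\ZZ)$.

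Next I would identify $d_1$ with a \v{C}ech-style restriction differential. Since the face map $\partial_{H_j}$ is by construction cap product with the Thom cocycle $T_{H_j}$, the key input is the standard naturality statement that under Poincar\'e--Lefschetz duality, cap product with the Thom class of a smooth closed submanifold corresponds to pullback of cohomology classes along the inclusion of that submanifold. Applied to the transverse codimension-one inclusions $H_{I\cup\{j\}} \hookrightarrow H_I$, this identifies $d_1$ with the alternating sum of restriction maps
\[
\bigoplus_{\sharp I=p} H^q(H_I \cap U;\ZZ) \longrightarrow \bigoplus_{\sharp J=p+1} H^q(H_J \cap U;\ZZ),
\]
which is precisely the $E_1$-differential of the Mayer--Vietoris/NCD spectral sequence converging to $H^{p+q}(U, \mathbf{H}\cap U; \ZZ) = H^{p+q}(U, \mathbf{H}; \ZZ)$.

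To promote this identification of spectral-sequence pages to an actual isomorphism of abutments, I would construct a genuine morphism from $AC^*(U,\mathbf{H};\ZZ)$ to a complex manifestly computing $H^*(U,\mathbf{H};\ZZ)$, for instance the total complex built from singular cochains on each $H_I \cap U$ with the \v{C}ech and coboundary differentials. This is where sheaf theory enters, as the introduction signals: one sheafifies the admissible-chain constructions on $X$, verifies a local moving lemma and a local Poincar\'e--Lefschetz duality in that sheafy setting, and shows that the local quasi-isomorphisms glue compatibly with the face maps $\partial_{H_j}$ to yield the desired global comparison map.

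The main obstacle is precisely this last step: producing an honest map of complexes (not merely an $E_1$-page identification) that intertwines both differentials $\delta$ and $\partial_{\mathbf{H}}$. In particular, one must carefully track the signs introduced by the good orderings used to define the cap products and verify compatibility with the sign conventions of the standard NCD spectral sequence; the \v{C}ech differential and the alternating face differential of Definition \ref{doubleAC} must be shown to match up to a chain homotopy respecting the admissibility constraint.
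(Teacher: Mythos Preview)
Your spectral-sequence heuristic is sound: the $E_1$-identification via the moving lemma and Lefschetz duality is correct, and the expectation that $d_1$ becomes restriction (because cap with a Thom class dualizes to pullback) is the right intuition. But, as you yourself flag, an $E_1$-match is not a proof without a genuine map of filtered complexes, and this is where your proposal stops. The paper does not complete your spectral-sequence comparison; it takes a different route that builds the comparison map first and never needs the $E_1$ computation.

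Concretely, the paper sheafifies both the ordinary and the admissible chain complexes to get sheaves $\mathcal{C}^*$ and $\mathcal{AC}^*$ on $X$, and proves that $\mathcal{AC}^*[-2m]$ is a resolution of the constant sheaf $\ZZ$. The local acyclicity here is the substantive step: it uses Zeeman's PL general-position theorem to move a bounding chain into admissible position while fixing its boundary---this is the ``local moving lemma'' you allude to, but the specific tool is not obvious. A second subtlety you do not anticipate: the paper does \emph{not} show that $\mathcal{AC}^*$ is fine (admissibility is not stable under arbitrary truncation), but instead shows the unconstrained sheaf $\mathcal{C}^*$ is fine via an explicit partition of unity built on a tower of barycentric subdivisions. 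These two facts, combined through Godement resolutions and a diagram chase, yield an honest quasi-isomorphism $AC_{2m-*}(X,D;\ZZ)\to \Gamma(U,G\mathcal{AC}^*[-2m])$. The face maps $\partial_{H_i}$ sheafify and pass to the Godement level, so one obtains a map of double complexes from $AC^{*,*}(U)$ to a double complex of global sections of flabby resolutions of $\ZZ$ on the strata $H_I\cap U$; the total complex of the latter computes $H^*(U,\mathbf H;\ZZ)$ directly as hypercohomology of the \v{C}ech complex of constant sheaves. No separate check that ``$d_1=$ restriction'' is required: once everything is a resolution of $\ZZ$, the face maps automatically induce the correct maps on $H^0$ of the sheaves.

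One further point: your suggestion of mapping $AC^*$ to a total complex of singular \emph{cochains} on the $H_I\cap U$ would require a chain-level Lefschetz-duality map, which does not exist in any natural way---duality is only an isomorphism on (co)homology. The paper sidesteps this by never leaving the chain side; the comparison with cohomology happens entirely through the statement that both $\mathcal{AC}^*[-2m]$ and the Godement resolution resolve the same constant sheaf.
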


\begin{proof} 
\begin{definition}
Let $V$ be an open subset of $X$. For a semi-algebraic triangulation $K$ of $X$ , we denote by $C_*(K, (X-V))$ resp. $AC_*(K, (X-V))$
the quotient of $C_*(K)$ resp. $AC_*(K)$ by the chains 
contained in $(X-V)$.  More precisely,
\[\begin{array}{ll}
&AC_j(K,(X-V))\\
=&
\dis\frac{\{\gamma\in C_j(K)\,|\,|\ga|-D\text{ and }|\delta\ga|-D\text{ meet the faces 
properly}\}}{\{\text{ those }\ga\text{ such that }|\ga|\subset X-V\}.}
\end{array}\]
The associated sheaf to the presheaf $V\mapsto 
\underset{K}\varinjlim \,C_*(K, (X-V))$ resp. $V\mapsto 
\underset{K}\varinjlim \,AC_*(K, (X-V))$ is denoted by $\cal C_*$ resp. $\AC_*$. 
Here the limit is taken over semi-algebraic subdivisions of $X$. We define the associated cohomological
complex  by $\cal C^i=\cal C_{-i}$ resp. $\AC^i=\AC_{-i}$.
\end{definition}

\begin{proposition}
\label{resolution}
The complexes of sheaves $\cal C^*[-2m]$ and $\AC^*[-2m]$ are resolutions of the constant sheaf
$\ZZ$.

\end{proposition}

\begin{proof}
We give a proof of the case of $\AC^*[-2m]$. The proof for
$\cal C^*[-2m]$ is similar and simpler. We recall the following theorem.
\begin{theorem}{(\cite{Ze} Ch.6, Theorem 15}) 
\label{Zeeman moving}
Let $M$ be a compact $PL$-manifold.  Let $\cal X$,
$X_0$ and $H$ be subpolyhedra of $M$ such that $X_0\subset \cal X$ and ${\cal X}-X_0
\subset \overset\circ{M}$ (the interior of $M$). Then there exists
an ambient $PL$ isotopy $h:\,\,M\times [0,1]\to M$ which fixes $X_0$ and $\dot{M}$ (the boundary of $M$), and such that $h_1({\cal X}-X_0)$ is in general
position with respect to $H$ i.e. the inequality
\[\dim (h_1({\cal X}-X_0)\cap H)\leq \dim ({\cal X}-X_0)+\dim H-\dim M\]
holds. Here $h_t(m)=h(m,t)$ for $m\in M$ and $t\in [0,1]$.
\end{theorem}

The isotopy $h$ can be made arbitrarily small in the following sense. Given a positive number
$\e>0$, there exists a $PL$ isotopy $h$ as above such that for any point $(x,t)\in M\times [0,1]$,
the inequality $|h_0(x)-h_t(x)|<\e$ holds. Here $|\cdot|$ is the norm of the Euclidean space in which  $M$ is
contained. 

For a point $x\in X$ and its neighborhood $V_x$, let $\ga\in AC_j(K, (X-V_x))$ be a cycle i.e.
$|\delta\ga| \subset (X-V_x)$. 
There exists a subdivision $K'$ of $K$ of which  $x$ is a vertex of 
$K'$, and the simplicial neighborhood of $x$ in $K'$ is contained in $V_x$. We replace $V_x$ with
the interior of the simplicial neighborhood of $x$.
By excision, we have
$$H_j(X, (X-V_x);\ZZ)\simeq H_j(V_x, \delta V_x;\ZZ)
=\begin{cases}
0& j\neq 2m\\
\ZZ &j=2m.
\end{cases}
$$  It follows that if $j\neq 2m$, there exists $\Gamma\in C_{j+1}(K')$ such that 
$|\delta\Gamma-\ga|\subset  (X-V_x).$  
By applying Theorem \ref{Zeeman moving} to the case where
${\cal X}=|\Gamma|\cup |\ga|\cup D$ and $X_0=|\ga|\cup D$, 
we can move $\Gamma$ to a chain
$\Gamma'$ keeping $|\ga|\cup D$ fixed, so that $|\Gamma'|-D$ and 
$|\delta\Gamma'|-D$
meet the faces properly, and for a smaller neighborhood $V_x'$ of $x$, we have
$|\delta\Gamma'-\gamma| \subset (X-V_x')$. For $j=2m$, the fundamental 
class of $X$ gives the inclusion $\ZZ\to \AC_{2m}$, the image of which is isomorphic
to $H_{2m}(V_x, \delta V_x;\ZZ)$.

\end{proof}

\begin{proposition}
\label{fineness}
Let $j:\,U\to X$ be the inclusion map. The complex of sheaves $j^*\cal C^*$ is a complex of fine sheaves.
\end{proposition}
\begin{proof}
We recall the definition of a fine sheaf from \cite{Sw} page 74 Definition. A sheaf 
$\cal F$ 
on a space $X$ is fine
if, for every locally finite covering $\{U_\al\}$ of $X$, there exist endomorphisms $l_\al:\, \cal F\to \cal F$
such that
\begin{enumerate}
\item The support of $l_\al$ is contained in the closure of $U_\al$.
\item $\sum l_\al=\text{id}.$
\end{enumerate}
Let $\{U_\al\}$ be a locally finite covering of $U$. Let $(K,M)$ be a semi-algebraic 
triangulation of the pair $(X,D)$ such that $M\lhd K$.  Let
$\pi:\, K\to [0,1]$ be the simplicial map defined on each vertex $v$ by
\begin{equation}
\label{projection}
\pi(v)=
\begin{cases}
0& v\in M\\
1& v\notin M.
\end{cases}
\end{equation} 
Since $M\lhd K$, $\pi^{-1}\{0\}=M$. By abuse of notation, the map of polytopes $X=|K|\to [0,1]$
induced by $\pi$ is also denoted by $\pi$. Inductively
we will construct subdivisions $K_n$ $(n=1,2,\cdots)$ of $K$ with the following property.

\noindent{\it  There is a full subcomplex $L_n$ of $K_n$ such that $|L_n|\supset \pi^{-1}([1/n,1])$ 
and each simplex $\si\in L_n$ is contained in a $U_\al.$}

\noindent Let $\text{sd}K$ be the barycentric 
subdivision of $K$. Since the polytope $|\pi^{-1}\{1\}|=|C(M,K)|$ is compact,  there exists an $m$ such that each simplex in 
$\text{sd}^m(\pi^{-1}\{1\})$ is contained in a $U_\al$. The complex $K_1$ is defined to be $\text{sd}^mK$ 
for the smallest $m$ with the property, and $L_1$ is defined to be $\text{sd}^m(\pi^{-1}\{1\}).$
Suppose that $K_n$ and $L_n$ has been constructed.  We denote by $\text{sd}^m K_n \text{ mod }L_n$
the $m$-th barycentric subdivision of $K_n \text{ mod }L_n$. 
\begin{lemma}
For sufficiently large $m$, each simplex $\si\in N(\pi^{-1}[\frac1{n+1},1],\,\, 
{\rm sd}^m K_n  \mod \,L_n)$ is contained in a $U_\al.$
\end{lemma}

\begin{proof}
For a simplex $\si$ of $K_n$, $\si\cap L_n$ is either empty or is a face of $\si$ 
since $L_n\lhd K_n$. 
Suppose that $\si\cap L_n=:\tau$ is not empty. If $\si\notin L_n$, 
then $\si$ is the join $\tau\ast \eta $ for a face $\eta$ with 
$\eta\cap L_n=\emptyset.$
Let $pr:\,\,\si\to [0,1]$ be the simplicial map such that $pr^{-1}\{0\}=\tau$ and 
$pr^{-1}\{1\}=\eta$. Then we see by induction on $k$ that $N(\tau, \,\,\text{sd}^k\si \text{ mod }
\tau)\subset pr^{-1}([0,(\frac {\dim \si}{\dim \si+1})^k])$. By the induction hypothesis $\tau$ is contained
in a $U_\al$. Hence
for a sufficiently large $m$ $|N(\tau, \,\,\text{sd}^k\si \text{ mod }
\tau)|\subset U_\al$.  The set $|N(\pi^{-1}[\frac1{n+1},1],\,\,\text{sd}^m K_n \text{ mod }L_n
)|$ is a compact subset of $U$. It follows that for a possibly
larger $m$ each simplex of $N(\pi^{-1}[\frac1{n+1},1],\,\,\text{sd}^m K_n \text{ mod }L_n)$ 
is contained in a $U_\al$. By Lemma 3.3 (a) \cite{RS},
There is a subdivision of $\text{sd}^m K_n \text{ mod }L_n$ of which  $N(\pi^{-1}[\frac1{n+1},1],\,\,\text{sd}^m K_n \text{ mod }L_n)$ is a full subcomplex. Let $K_{n+1}$ be such a subdivision,
and $L_{n+1}$ is defined to be 
\newline $N(\pi^{-1}[\frac1{n+1},1], \,\,\text{sd}^m K_n \text{ mod }L_n)$.
\end{proof}
Let $L_\infty=\underset{n}\cup L_n$. Then $L_\infty$ is a triangulation of $U$
each simplex of which is contained in a $U_\al$. For each $\si\in L_\infty$,
choose one such $\al_0$ and define $l_\al(\si)=
\begin{cases}\si& \al=\al_0\\
0& \al\neq \al_0
\end{cases}.$  Let $x\in U$ be a point, and $s\in (\cal C_j)_x$ an element of
the stalk of the sheaf $\cal C_j$ at $x$. $s$ is the restriction
of an element $S\in C_j(K, (X-V_x))$ for a neighborhood $V_x$ of $x$ and for a triangulation
$K$ of $X$. We can assume that $V_x\subset |L_n|$ for an $n$. $S$ is a sum
$\sum a_\si \si$. Taking $V_x$ smaller if necessary, we can assume that each $\si$ contains $x$. Take a common subdivision $K'$ of $K$ and $K_n$. 
For each simplex $\tau$ of $K$ and $\eta$ of $K_n$, the intersection
$\tau\cap \eta$ is the union of interior of several simplexes of $K'$. For a simplex
$\si\in K$ with $a_\si \neq 0$, we have $\si\cap L_n=\sum \tau_m \text{ mod }
X-V_x$ with each $\tau_m$ a $j$-simplex of $K'$ since $V_x$ is an neighborhood of $x$ in $U$. 
For each $\tau_m$, let $\eta_m$ the smallest simplex of $K_n$ which contains $\tau_m$. 
 Then we define
\[l_\al(\tau_m)=\begin{cases}
\tau_m & l_\al(\eta_m)=\eta_m\\
0 & l_\al(\eta_m)=0.
\end{cases}\]
This definition is compatible with subdivisions since 
we have $\tau_m^\circ \subset \eta_m^\circ.$  
\end{proof}
Now we can prove Theorem \ref{cohom isom}. For a sheaf $\cal F$, we denote by $G\cal F$ the canonical resolution of Godement of $\cal F$. 
As in the proof of  Proposition \ref{fineness}, 
let $(K,M)$ be a semi-algebraic 
triangulation of the pair $(X,D)$ such that $M\lhd K$ and
$\pi:\, K\to [0,1]$ be the simplicial map defined on each vertex $v$ by
\[\pi(v)=
\begin{cases}
0& v\in M\\
1& v\notin M.
\end{cases}
\]  We denote by $C_D$ the polytope $|C(M,K)|$, and by $i_D$ the inclusion of
$C_D$ into $U$. 
We have the following commutative diagram with exact rows.
\[
\begin{matrix}
0&\xrightarrow{} &\Ker i_D^*&\xrightarrow{}& \Gamma(X, \cal C^*[-2m])&
\xrightarrow{\scriptsize{i_D^*}}& \Gamma(C_D, \cal C^*[-2m])&\xrightarrow{}&0\\
&&\uparrow{\text{\scriptsize{$\al_1$}}}&&\uparrow{\text{\scriptsize{$\al_2$}}}&&\uparrow{\text{\scriptsize{$\al_3$}}}&&\\
0&\xrightarrow{}&C_{2m-*}(D;\ZZ)&\xrightarrow{}&C_{2m-*}(X;\ZZ)&\xrightarrow{}&C_{2m-*}(X,D;\ZZ)&\xrightarrow{}&0
\end{matrix}
\]
See Definition \ref{def:definition of AC with inductive limit} for the definition of $C_{2m-*}(D;\ZZ)$ and $C_{2m-*}(X;\ZZ)$.
\begin{lemma}
\label{D-qis}
The map $\al_1$ is a quasi-isomorphism.
\end{lemma}
\begin{proof}
We denote by $V_n$ the polytope $\pi^{-1}[0, 1-1/n]$.  By \cite{Mu} Lemma 70.1
$D$ is a deformation retract of $V_n$, so that the natural inclusion
\[C_{2m-*}(D;\ZZ)\to C_{2m-*}(V_n;\ZZ)\] is a quasi-isomorphism. 
Since $V_n$ is compact, the natural
map
\[C_{2m-*}(V_n;\ZZ)\to \Gamma(V_n, \cal C^*[-2m])\]
is an isomorphism. We have the assertion
because $\Ker i_D^*=\underset{n}\varinjlim \Gamma(V_n, \cal C^*)$.
\end{proof}
Since $X$ is compact, the map $\al_2$ is an isomorphism. It follows that $\al_3$ is a quasi-isomorphism.
 Since  $C_D$ is a deformation retract of $U$, by Proposition \ref{fineness}
the restriction map $\Gamma(U,\cal C^*)\to \Gamma(C_D, \cal C^*)$ is a quasi-isomorphism. Hence
the natural map $\al_4:\,C_{2m-*}(X,D;\ZZ)\to \Gamma(U,\cal C^*[-2m])$ is quasi-isomorphic,
and the map $\beta:\,\Gamma(U,\cal C^*)\to \Gamma(U,G\cal C^*)$
is quasi-isomorphic since $j^*\cal C^*$ is a complex of fine sheaves.
We  have a commutative diagram
\begin{equation}
\label{q-isom}
\begin{matrix}
\Gamma(U,G\AC^*[-2m])&\xrightarrow{\text{\scriptsize{$\iota_1$}}}& \Gamma(U, G\cal C^*[-2m])\\
\uparrow{\scriptsize{r}}&&\uparrow{\text{\scriptsize{$\beta$}}}\\
\Gamma(U, \AC^*[-2m])&\xrightarrow{}&\Gamma(U,\cal C^*[-2m])\\
\uparrow{\text{\scriptsize{$\al$}}}&&\uparrow{\text{\scriptsize{$\al_4$}}}\\
AC_{2m-*}(X,D;\ZZ)&\xrightarrow{\text{\scriptsize{$\iota_2$}}}&C_{2m-*}(X,D;\ZZ)
\end{matrix}
\end{equation}
Since the maps $\iota_1$, $\beta\circ \al_4$ and $\iota_2$ are quasi-isomorphisms,
the map $r\circ \al$ is also a quasi-isomorphism. 
The face maps 
$\part_{H_i}:\,AC_*(X,D;\ZZ)\to AC_{*-2}(H_i, H_i\cap D;\ZZ)$ induce face maps
$\part_{H_i}:\,\Gamma(U, G\AC[-2m])\to \Gamma(H_i\cap U, G\AC[-2(m-1)])$. 
\begin{notation}
\label{simple complex}
For a double complex $C^{*,*}$, we denote by $s(C^{*,*})$ the simple complex associated to $C^{*,*}.$
\end{notation} 
We see that
the complex $AC^*(U,{\bf H})$ is quasi-isomorphic to the simple complex associated to
the double complex 
\[\AC^{p,q}(U)=\underset{\sharp I=p}\oplus \Gamma(H_I\cap U, G\AC[-2(m-p)]^q).\]
 The cohomology of the complex $s(\AC^{*,*}(U))$ 
is equal to $H^*(U,{\bf H};\ZZ)$.
\end{proof}

\section{The duality}
In this section we will show that the duality between the de Rham cohomology
and the singular cohomology can be described via integral on admissible chains.
\begin{definition}
\label{res}
Let $\varphi$ be a smooth $p-$form on $X$ with logarithmic singularity along $\bf H$.
\begin{enumerate}
\item Let $H_\al$ be a codimension one face of $X$. The Poincare residue of $\varphi$ at $H_\al$, which
is denoted by $\res_{\{\al\}}\varphi$, is defined as follows. If $z=0$ is a local equation of $H_\al$
and $\varphi=\frac{dz}z\we \psi+\eta$ where $\eta$ does not contain $dz$, then $\res_{\{\al\}}\varphi
=\psi|_{H_\al}.$ Note the different sign convention from the one defined in \cite{Gr}.

\item 
For a subset $I=\{i_1<\cdots <i_k\}$ of $T$, the residue of $\varphi$ at $H_I$,
which is denoted by $\res_I (\varphi)$, is defined by the succession of Poincare residues
\[\res_{H_{i_1}}\circ \cdots \circ\res_{H_{i_k}}(\varphi).\]

\item For an element $\ga$ of $AC_{p-\sharp I}(H_I,D\cap H_I;\ZZ)$, the integral
\[(2\pi i)^{\sharp I}\int_\ga \res_I(\varphi)\]
is denoted by $(\ga,\varphi)$.

\end{enumerate}
\end{definition} 

\begin{proposition}
\label{C-S}
Let $X$ be a smooth projective variety over $\C$, $D$ a closed subset of $X$, $U=X-D$, ${\bf H}=H_0\cup \cdots \cup
H_t$ a strict normal crossing divisor on $X$ and $\varphi$ be a smooth $p$-form on $U$ with compact support 
and with logarithmic singularity along $\bf H$. 
\begin{enumerate}

\item  For an admissible $p-\sharp I$-simplex $\si$ on $H_I$, the integral
\[\int_\si\res_I(\varphi)\]
converges absolutely.

\item Let $\ga$ be an element of $AC_{p-\sharp I+1}(H_I,D\cap H_I;\ZZ)$. Under the notation of Notation \ref{res},  we have an equality
\begin{equation}
\label{Cauchy-Stokes}
(\part_{\bf H} \ga, \varphi)=(\delta\ga, \varphi)-(-1)^{\sharp I}(\ga, d\varphi).
\end{equation}
\end{enumerate}
\end{proposition}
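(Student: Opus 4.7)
The plan for part (1) is a local estimate. Working in a chart on $H_I$ around a point of $H_I\cap H_K$ (with $K$ disjoint from $I$), I choose local analytic coordinates $z_1,\dots,z_n$ so that $H_I\cap H_j=\{z_j=0\}$ for $j\in K$. In these coordinates the form $\res_I(\varphi)$ is bounded by $C\prod_{j\in K}|z_j|^{-1}$, and the problem reduces to showing that this function is integrable on $\sigma$. The admissibility inequality $\dim(\sigma\cap H_{I\cup K})\le \dim\sigma-2|K|$ supplies precisely the needed dimension drop: passing to polar coordinates $z_j=r_je^{i\theta_j}$ and applying Fubini together with the facewise-regular embedding of $\sigma$ from Remark \ref{facewise regular embedding}, one proves absolute convergence by induction on $|K|$, each log pole being integrable once $\sigma$ drops two real dimensions across the corresponding divisor.

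For part (2), I treat $H_I$ as the ambient variety and apply a delete-tubes-and-Stokes argument to the smooth form $\omega:=\res_I(\varphi)$ on $H_I\setminus\bigcup_{j\notin I}H_j$. For small $\epsilon>0$ let $T_\epsilon^j$ be an $\epsilon$-tubular neighborhood of $H_I\cap H_j$ inside $H_I$ and set $\gamma_\epsilon:=\gamma\setminus\bigcup_{j\notin I}T_\epsilon^j$. Applying ordinary piecewise-smooth Stokes simplex-by-simplex gives
\[
\int_{\gamma_\epsilon}d\omega \;=\; \int_{\delta\gamma\cap\gamma_\epsilon}\omega \;-\; \sum_{j\notin I}\int_{\gamma\cap\partial T_\epsilon^j}\omega.
\]
As $\epsilon\to 0$, the first two terms converge to $\int_\gamma d\omega$ and $\int_{\delta\gamma}\omega$ respectively by (1) (applied both to $\varphi$ and to $d\varphi$, which is again a log form). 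For the tube terms, locally expand $\omega=\tfrac{dz_j}{z_j}\wedge\psi+\eta$ with $\psi,\eta$ smooth and pass to polar coordinates in $z_j$; the $d\theta$-integration yields the factor $2\pi i$, so
\[
\int_{\gamma\cap\partial T_\epsilon^j}\omega \;\longrightarrow\; 2\pi i\int_{\gamma\cap(H_I\cap H_j)}\res_j(\omega).
\]

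To repackage this into the stated formula, I identify the set-theoretic intersection $\gamma\cap(H_I\cap H_j)$ (oriented as the limit of the tube boundaries) with the chain $\partial_{H_j}\gamma$ defined by the Thom-cocycle cap product. Since $\gamma$ is admissible and meets $H_I\cap H_j$ properly, Proposition \ref{basic property of face map 1} and the construction in \S \ref{subsec:cap prod and thom cocycle} ensure that these two semi-algebraic chains agree and carry matching orientations. Multiplying the limiting identity by $(2\pi i)^{|I|}$, using $\res_I\circ d=(-1)^{|I|}d\circ\res_I$ to convert $(2\pi i)^{|I|}\int_\gamma d\omega$ into $-(-1)^{|I|}(\gamma,d\varphi)$, and collecting the alternating signs of $\partial_{\bf H}$ from Definition \ref{doubleAC} against the $(-1)^k$ sign produced when $j$ is inserted into the ordered set $I\cup\{j\}$ gives exactly (\ref{Cauchy-Stokes}).

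The main obstacle lies at the interface between the analytic residue calculation and the semi-algebraic nature of $\gamma$. There are two delicate points: first, in (1), the local estimates must be valid on a general semi-algebraic simplex, not a smooth submanifold; this is handled by combining the facewise-regular embedding with the admissibility dimension bound, so that near each stratum of $\bf H$ one can slice by radial coordinates and apply Fubini without smoothness of $\sigma$. Second, in (2), one must verify that the combinatorial $\partial_{H_j}\gamma$ carries the correct orientation to match the Stokes boundary $\gamma\cap\partial T_\epsilon^j$; this sign check, though not deep once Proposition \ref{prop face map first properties}\,\eqref{indep of T for face map} is granted, is the place where one must be most careful to align conventions.
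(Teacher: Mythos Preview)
Your overall strategy is in the same family as the paper's---a limiting Stokes argument in which the singular part of the log form is isolated near each divisor and produces a $2\pi i$ times a residue. But the execution differs in a way that matters, and there are two points where your sketch is genuinely incomplete.

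\textbf{Cutoff versus tube deletion.} The paper does \emph{not} excise tubular neighborhoods. Instead it multiplies $\varphi$ by a smooth bump $\rho_\epsilon$ vanishing near $H$, so that $\rho_\epsilon\varphi$ is smooth on all of $\gamma$ and ordinary Stokes applies with no boundary along a tube. The limiting identity then comes from $\int_\gamma d\rho_\epsilon\wedge\varphi$, and the key observation is that $d(\rho_\epsilon\tfrac{1}{2\pi i}\tfrac{dz_H}{z_H})$ is a de Rham representative of the Thom class of $H$. This immediately connects the analytic limit to the \emph{definition} of $\partial_H$ as cap product with a Thom cocycle. Your tube-deletion approach yields the same residue on the tube boundary, but you are then left with a separate problem: why does the oriented geometric slice $\gamma\cap(H_I\cap H_j)$, as obtained in the limit from $\gamma\cap\partial T_\epsilon^j$, coincide with the simplicial chain $T_{H_j}\overset{\co}\cap\gamma$ of Definition~\ref{face map}? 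Proposition~\ref{basic property of face map 1} only tells you $T_{H_j}\cap\gamma$ is supported on $H_j$; it does not identify it with the geometric intersection. This is more than a sign check---it is precisely the content that the paper's choice of $dc_\epsilon$ as Thom cocycle builds in for free.

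\textbf{Higher-codimension strata.} Your Stokes computation writes the tube contributions as a sum $\sum_{j\notin I}\int_{\gamma\cap\partial T_\epsilon^j}\omega$, which is only clean when the tubes $T_\epsilon^j$ are disjoint on $\gamma$, i.e.\ when $|\gamma|$ avoids ${\bf H}_c=\bigcup_{j\ne k}H_j\cap H_k$. The paper treats this case first (by decomposing $\gamma$ into pieces $\gamma_k$, each meeting a single $H_k$, and then further into pieces $\gamma^{(\tau)}$), and then runs a separate limit argument, deferred to \cite{part II}~\S4.4, to handle the general case. You do not address what happens when $\gamma$ hits a corner of $\bf H$; the tube boundaries there overlap and the individual limits $\int_{\gamma\cap\partial T_\epsilon^j}\omega$ need not exist separately, even though their sum does.

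For part (1), your outline (polar coordinates plus the admissibility dimension drop) is the right idea and matches the paper's direction; the paper carries it out via a sector decomposition of $\mathbb C$, a specific change of variables $(r,\tau)$ with $\tau=y/x$, and an appeal to a push-forward estimate (Proposition~2.5 of \cite{part I}) to handle the semi-algebraic, rather than smooth, nature of $\sigma$. Your ``Fubini plus facewise regular embedding'' is a fair shorthand for this, but the actual argument needs that push-forward lemma.
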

\begin{remark}
\label{counter example}
\begin{enumerate}
\item The convergence of integrals as in the assertion (1) fails in general for $C^\infty$-chains.
For example, consider the chain $D$ in $\CC^2$ given  by 
\[\{(x,y)\,|\,(x,y)\in \RR^2,\,1\leq x\leq 2,\,\,e^{-\frac{1}{x-1}}\leq y\leq e^{-1}\}.\]
If we set the face of $\CC^2$ to be $\{z_1=0\}\cup \{z_2=0\}$, then the chain $D$ meets the faces 
of $\CC^2$ properly, but the integral $\dis \int_D\frac{dz_1}{z_1}\we 
\frac{dz_2}{z_2}$ diverges. $D$ is not $\delta$-admissible
in the sense that the boundary of $D$ does not meet the face $\{z_2=0\}$
properly, but adding some chains in the imaginary direction we obtain a 
$\delta$-admissible chain.  

\item A consequence of Proposition \ref{C-S} is that an element of $AC_*(U,\bf H)$
defines a normal current of intersection type along $\bf H$. See \cite{Ki} and \cite{KL}
for the definition of normal currents of intersection type along $\bf H$.
\end{enumerate}
\end{remark}
\begin{proof} First we consider (1). The same proof as that of \cite{part I} Theorem 4.4 works, but we need some modification.
The proof of the convergence is reduced to showing ``allowability'' of a certain semi-algebraic 
set. The argument given in Section 4 of \cite {part I} should be modified as follows. We need to consider the following
situation: Let $\CC^n\times\CC^k$ with coordinates $(z_1,\cdots, z_{n+k})$. Let $ H_i, 1=1,\cdots, n$ be the coordinate
hyperplanes. A face of $\CC^n\times \CC^k$ is a subset of the form $H_I=\underset{i\in I}\cap H_i$
where $I$ is a subset of $\{1,\cdots, n\}$. A closed semi-algebraic subset $A$ of  $\CC^n\times \CC^k$ is said to be
admissible if for any face $H_I$, an inequality
\[\dim(A\cap H_I)\leq \dim A-2\sharp I\]
holds. We need to show the following:

\quad

\noindent {\bf Claim}. {\it Let $A$ be a compact admissible semi-algebraic subset of $\CC^n\times \CC^k$ of dimension
$h$. If $\varphi$ is an $h$-form on $\CC^n\times \CC^k$ of the shape
\[\frac{dz_1}{z_1}\we \cdots \we \frac{dz_n}{z_n}\we \eta\]
where $\eta$ is a smooth $(h-n)$-form on $\CC^n\times \CC^k$, then the integral
\[\int_A\varphi\]
converges absolutely.} 

\quad

\noindent We divide the complex plane $\CC$ in to four sectors
\[S_i=\{z=re^{i\theta}|0\leq r<\infty,\,\,(i-2)\pi/4\leq \theta\leq i\pi/4\},\,\,i=0,1,2,3.\]
and make a coordinate change as follows: For $z=x+iy$, we set $r=|z|,\,\,\tau=y/x$. We have
\[x=\frac{r}{\sqrt{\tau^2+1}},\quad y=\frac{r\tau}{\sqrt{\tau^2+1}}.\]
We have a continuous semi-algebraic map
\[\pi:\,\,\RR_{\geq 0}\times [-1,1]\to S_0,\quad (r,\tau)\mapsto (x,y).\]
We also have equalities
\[ \frac{dz}{z}=\frac{dr}{r}+i\frac{d\tau}{\tau^2+1},\,\,\frac{dz}{z}\we d\bar{z}=\frac{-2(\tau+i)}{(\tau^2+1)^{3/2}}
dr\we d\tau.\] 

The form $\varphi$ is the sum of  forms of type
\begin{equation}
\label{original form}
f \frac{dz_1}{z_1}\we \cdots \we \frac{dz_n}{z_n}\we \underset{k\in R}\bigwedge d\bar{z}_k
\we( du_1\we \cdots \we du_{h-n-|R|}).
\end{equation}
Here $R$ is a subset of $\{1,\cdots, n\}$, $\{u_1,\cdots, u_{h-n-|R|}\}\subset \{x_{n+1},y_{n+1},\cdots,
x_{n+k}, y_{n+k}\}$ and $f$ is a smooth function on a neighborhood of $A$. 

Writing $\tilde{\CC}_+=\RR_{\geq 0}\times [-1,1]$, let
\[\pi=\pi_{\al_1,\cdots,\al_n}\times (\text{identity}):\,\tilde{\CC}_+^n\times \CC^k\to (S_{\al_1}\times \cdots \times S_{\al_n})
\times \CC^k\]
be the product of the maps $\pi:\,\,\tilde{\CC}_+\to S_{\al_i}$ and the identity of $\CC^k$, still denoted by the same letter. 
The pull-back of the form (\ref{original form}) is the sum of the forms
\[
f'\varphi_{P,Q,R}\we (  du_1\we \cdots \we du_{h-n-|R|})
\]
where
\begin{equation}
\label{decomposed form}
\varphi_{P,Q,R}=\underset{i\in P}\bigwedge \frac{dr_i}{r_i}\we \underset{j\in Q}\bigwedge d\tau_j\we 
\underset{k\in R}\bigwedge (dr_k\we d\tau_k)
\end{equation}
where $(P,Q)$ varies over partitions of $\{1,\cdots, n\}-R$, and $f'$ is a smooth function.

Consider the map
\[\tilde{\CC}_+^n\times \CC^k\to \RR^P_{\geq 0}\times [-1,1]^Q\times \tilde{\CC}_+^R\times \RR^{h-n-|R|}\]
given by the product of the maps
\[
\begin{array}{ll}
\tilde{\CC}^P_+\to \RR^P_{\geq 0},& (r_i,\tau_i)\mapsto r_i,\\
\tilde{\CC}^Q_+\to [-1,1]^Q,& (r_j,\tau_j)\mapsto \tau_j\\
\tilde{\CC}^R_+\to \tilde{\CC}^R_+,& \text{the identity map}\\
\CC^k\to \RR^{h-n-|R|}, &(z_{n+1},\cdots, z_{n+k})\mapsto (u_1,\cdots, u_{h-n-|R|}).
\end{array}
\]
Taking the product of these maps we obtain a map
\[q:\,\,\tilde{\CC}_+^n\times \CC^k\to (\RR)^P\times (\RR)^Q\times (\RR^2)^R\times \RR^{h-n-|R|}.\]
We need to show the absolute convergence of the integral

\[\int_{\pi^{-1}(A)}\varphi_{P,Q,R}\we (  du_1\we \cdots \we du_{h-n-|R|}).\]
Let $\varphi'_{P,Q,R}$ be the form on $\RR^P\times \RR^Q\times (\RR^2)^R$ given by the same formula
as (\ref{decomposed form}). Applying Proposition 2.5 \cite{part I} to the set $\pi^{-1}(A)$ and the map $q$,
we are reduced to showing the absolute convergence of the integral
\[\int_{q\pi^{-1}(A)} \varphi'_{P,Q,R}\we (du_1\we\cdots \we du_{h-n-|R|}).\]
The argument after this is the same as that in Section 4 of \cite{part I}. Note that since the form we consider 
has a compact support contained in $X-D$, we can assume that $A\cap D=\emptyset$. So the argument is simpler
than the case of loc. cit.

Next we prove the assertion (2).  First we assume that the set $I$ is empty. 
The proof  is in the same line as that of Theorem Theorem 4.3 \cite{part II} with 
some modification. Let ${\bf H}_c$ be the union of higher codimensional faces i.e.
\[{\bf H}_c=\underset{i\neq j}\cup( H_i\cap H_j).\]
 First we consider the case where
$|\ga|\cap {\bf H}_c=\emptyset$, and then prove the general case by a limit argument.
So suppose that $|\ga|\cap {\bf H}_c=\emptyset.$  Let us write $\ga=\sum a_\si \si$. For a codimension one face
$H_k$, set $\dis \ga_k=\sum_{\si\cap H_k\neq\emptyset} a_\si\si.$ Then we have
\[\ga=\sum_k\ga_k+\ga'\]
where $|\ga'|\cap \bf H=\emptyset$. since $|\ga|\cap{\bf H}_c=\emptyset$, $|\ga_k|\cap H_j=\emptyset$
for $j\neq k$, and so that $\ga_k\in AC_r(K,D)$ for each $k$. Here we denote $r=p-\sharp I+1$ for short.
It suffices to prove $(2)$ for each $\ga_k$. So we consider the case
where $|\ga|\cap {\bf H}\subset H_0=H$. For a simplex $\tau\in \ga\cap H$, let
\[\ga^{(\tau)}=\sum_{\si\cap H=\tau}a_\si\si.\]
Since $K$ is a good triangulation, the intersection of each simplex $\si$ of $K$ with $F$ is a simplicial face of $\si$. So we have
\[\ga=\sum_{\tau\in \ga\cap H}\ga^{(\tau)}.\]
By Proposition 4.11 of \cite{part II}, we have $\ga^{(\tau)}\in AC_r(K,D)$. It suffices to prove $(2)$ for each $\ga^{(\tau)}$. So we assume that $\ga=\ga^{(\tau)}$ for a simplex $\tau\in H$. 
By taking sufficiently fine subdivision, we can assume that $|\ga|$ is contained in a coordinate
neighborhood $V$ of $X$ on which $H$ is defined by an equation $z_H=0$ for a function $z_H$. 
Under the comparison isomorphism of de Rham and singular cohomology, the class of $H\cap V$
in $H^2_{dR, H}(V,\CC)$ is equal to $\delta (\frac1{2\pi i}\frac{dz_H}{z_H})$ where
\[\delta:\,H^1_{dR}(V-H)\to H^2_{dR,H}(V)\]
is the boundary map. For $\epsilon>0$, let $\rho_\e:\,\CC\to [0,1]$ be a $C^\infty$-function
such that
\[\rho_\e(z)=
\begin{cases}
0& |z|<1/2\e \\
1& |z|>\e
\end{cases}
\]
and set $c_\e=\rho_\e(z_H)\frac1{2\pi i}\frac{dz_H}{z_H}.$ Put $C=|C(H,K)|$. For $\e$ sufficiently small 
we have $c_\e=\frac1{2\pi i}\frac{dz_H}{z_H}$ on $C$, and $\delta (\frac1{2\pi i}\frac{dz_H}{z_H})$
and $dc_\e$ defines the same class of $H^2_{dR}(V,C)\simeq H^2_{dR}(V,V-H)=
H^2_{dR,H}(V)$. We use the form $dc_\e$ as our Thom cocycle. 

\noindent By the Stokes formula, we have
\[\int_\ga d(\rho_\epsilon\varphi)-\int_{\delta\ga}\rho_\epsilon \varphi=0\]
and so
\[ \int_\ga d\rho_\epsilon\we \varphi=\int_{\delta\ga}\rho_\epsilon \varphi-\int_\ga \rho_\epsilon d\varphi.\]
By Lebesgue convergence theorem the right hand side of this equality converges to the  right hand side of
(\ref{Cauchy-Stokes}) as $\e\to 0.$  So we need to show that 
\begin{equation}
\label{limit}
\underset{\e\to 0}\lim \int_\ga d\rho_\epsilon\we \varphi=2\pi i\int_{\part \ga}\res \varphi.
\end{equation}
The rest of the argument is the same as the proof of Proposition 4.7 \cite{part II}.
The limit argument which is necessary to prove the general case is the same as the one given 
in Section 4.4 of \cite{part II}.  We consider a general case. Let $\ga$ be an element of $AC_{p-\sharp I+1}(H_I,D\cap H_I;\ZZ)$
with $I=\{\al_0<\cdots <\al_k\}$. If $z_i=0$ for $0\leq i\leq k$ are local equations of $H_{\al_i}$, and
$$\varphi=\frac{dz_k}{z_k}\we \frac{dz_{k-1}}{z_{k-1}}\we \cdots \we \frac {dz_0}{z_0}\we \psi+\eta$$ where
$\eta$ does not conatin $dz_k\we \cdots \we dz_0$, then $\res_I\varphi=\psi|_{H_I}$. 
If
 $$I'=\{\al_0<\cdots <\al_{i-1}<\beta<\al_i<\cdots <\al_k\},$$ 
then we have
$$2\pi i\int_{\part_{H_\beta}\ga}\res_{\{\beta\}}(\res_I \varphi)=\int_{\delta\ga}\res_I\varphi-\int_\ga d(\res_I\varphi) .$$
Since we have $\res_{I'}\varphi=(-1)^i\res_{\{\beta\}}(\res_I \varphi)$ and
$d(\res_I\varphi)=(-1)^{\sharp I}\res_I d\varphi$, we have the assertion.
\end{proof}

\begin{theorem}
\label{duality}
The map
\[AC^*(U,{\bf H})\otimes A^*_c(U)(\log {\bf H})\to \C,\quad \ga\otimes \varphi\mapsto (-1)^{\e_1(\ga)}(\ga,\varphi)\]
is a map of complexes, and 
induces the duality pairing
\[H^*(U,{\bf H};\QQ)\otimes H^*_{dR}(X-{\bf H},D;\C) \to \C.\]
Here the function $\e_1(\ga)$ is defined as follows. If $\ga$ is on the face $H_I$, then $\e_1(\ga)=\e_1(\dim\ga, \sharp I)$
where $\e_1(x,y)=\frac{x(x+1)}2+xy$. 
\end{theorem}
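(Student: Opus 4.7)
The plan is to split the theorem into the cochain-map claim and the duality claim.

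For the cochain-map claim, I would work componentwise. Fix a single-component chain $\ga \in AC_{2m-2p-q}(H_I, D\cap H_I;\ZZ)$ with $\sharp I = p$, so that $|\ga| = p+q$ in the simple complex and $d := \dim\ga = 2m-2p-q$. The total differential is $d_{\rm tot}\ga = \part_{\bf H}\ga + (-1)^p\delta\ga$. Applying the Cauchy--Stokes formula of Proposition \ref{C-S}(2) to rewrite $(\part_{\bf H}\ga, \varphi)$, the $(\delta\ga,\varphi)$ contributions in the cochain identity cancel because $\e_1(d-2, p+1) - \e_1(d-1, p) \equiv p+1 \pmod 2$, and the $(\ga, d\varphi)$ contributions cancel because $\e_1(d-2, p+1) - \e_1(d, p) \equiv d+1 \pmod 2$ combined with the parity $d \equiv q \pmod 2$ (which follows from $d = 2m-2p-q$). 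Both parities are immediate from $\e_1(x,y) = x(x+1)/2+xy$, and the specific form of $\e_1$ is engineered precisely for these cancellations.

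For the duality claim, I would use the weight filtrations on the two complexes. Filter $AC^*(U, {\bf H})$ by the face index $F^p = \bigoplus_{\sharp J\geq p} AC_\bullet(H_J, D\cap H_J;\ZZ)$; by Theorem \ref{cohom isom} applied to each closed face $H_J$ with divisor $D\cap H_J$, the $E_1$ page is $\bigoplus_{\sharp J = p} H^q(H_J, D\cap H_J;\QQ)$. Filter $A^*_c(U)(\log {\bf H})$ by pole order; iterated Poincar\'e residues identify its $E_1$ page with a corresponding sum of compactly supported de Rham cohomologies of the strata $H_J - D$. The pairing $\mu$ respects both filtrations, and on each face $H_J$ its $E_1$-pairing is the integration pairing of admissible chains against smooth forms, i.e., the classical Poincar\'e--Lefschetz duality between the two relevant cohomologies of the quasi-projective smooth variety $H_J - D$ of complex dimension $m-p$. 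Since Poincar\'e--Lefschetz duality is perfect and $\mu$ is compatible with all differentials, a standard spectral sequence comparison yields the perfect duality pairing $H^{2m-j}(U, {\bf H};\QQ) \otimes H^j(X - {\bf H}, D;\C) \to \C$.

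The principal obstacle is matching the $E_1$-differentials under the pairing: one must verify that the face map $\part_{\bf H}$ on chain cohomology is Poincar\'e--Lefschetz dual to the Poincar\'e residue on log-form cohomology. This compatibility is built into the construction of the face map via cap product with a Thom cocycle (Definition \ref{face map}), since the Thom cocycle represents the cohomology class of the face $H$ and the Poincar\'e residue is its de Rham avatar under the Thom--Gysin isomorphism. Once this $E_1$-level duality is established, the spectral sequence comparison is routine.
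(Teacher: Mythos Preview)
Your argument for the cochain-map claim matches the paper's: both deduce it from the Cauchy--Stokes formula (Proposition~\ref{C-S}), and your sign bookkeeping is correct.

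For the duality claim, your approach is correct but genuinely different from the paper's. You filter both sides by the $\bf H$-face index $\sharp I$ and reduce at $E_1$ to Poincar\'e duality between $H^\bullet(H_I-D;\QQ)$ and $H^\bullet_{c,\mathrm{dR}}(H_I-D;\CC)$ for the open smooth strata $H_I-D$. The paper instead first blows up to make $D$ a simple normal crossing divisor (Lemma~\ref{blow up}), then runs a spectral sequence over the $D$-strata $D_J$, reducing to the theorem for each \emph{projective} $D_J$; that compact case is then proved by an explicit PL-topological induction on the number of components of $\bf H$ (Proposition~\ref{isom}), which exhibits a chain-level quasi-isomorphism $C_*(C(L,K);\QQ)\to AC^{2m-*}(X,{\bf H};\QQ)$ and thereby identifies the pairing with the classical de Rham pairing on the complement of a regular neighborhood of $\bf H$. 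Your route is shorter and needs only standard open-manifold Poincar\'e duality as input; the paper's route is heavier but yields the concrete simplicial model of Proposition~\ref{isom}, which has independent value. One remark: what you call ``the principal obstacle'' --- adjointness of $\part_{\bf H}$ and Poincar\'e residue on $E_1$ --- is in fact automatic once the total pairing is a map of complexes respecting the filtrations, so no separate verification via Thom classes is needed. Also, your $E_1$-term should be written $H^q(H_J-D;\QQ)$ rather than $H^q(H_J,D\cap H_J;\QQ)$; the latter reads as relative cohomology of the pair.
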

\begin{proof} The fact that the above map is compatible with the differential is a consequence of 
Proposition \ref{C-S}. For the second assertion, first we reduce the problem to the case where $D=\emptyset.$ There exists a sequence of
blow-ups
\[\pi:\,\widetilde{X}\to X\]
such that $\pi^{-1}({\bf H}\cup D)$ is a simple normal crossing divisor and $\pi$ induces  an isomorphism
$\pi^{-1}(U)\to U$. 
\begin{lemma}
\label{blow up}
There is a quasi-isomorphism 
$$\pi_*:\,\,AC_*(\widetilde{X},\pi^{-1}(D))\to AC_*(X,D)$$
which is compatible with the pairing $(\bullet,\bullet)$: for $\varphi\in
A^p_c(U)(\log {\bf H})$ and $\ga\in AC_p(\widetilde{X},\pi^{-1}(D))$, we have
$(\ga,\varphi)=(\pi_*(\ga),\varphi)$.
\end{lemma}
\begin{proof} Let $K$ be a good triangulation of $\widetilde{X}$. For a $j$-simplex
$\si\in K$,  there is a good triangulation  $K'$ of $X$ such that $\pi(\si)$ is the union
of the interior of some simplexes of $K'$ by Theorem  \ref{thm:semi-algebraic triangulation}.
We define $\pi_*(\si)$ to be the sum of $j$-simplexes of $K'$ contained in $\pi(\si)$.
The orientation of each simplex is defined by the compatibility with that of $\si$. 
By (\ref{q-isom}) we have a commutative diagram
\[
\begin{matrix}
\Gamma(U, G\AC^*[-2m])&\xrightarrow{=}&\Gamma(U,G\AC^*[-2m])\\
\uparrow{\text{\scriptsize{$\widetilde{r\circ \al}$}}}&
&\uparrow{\text{\scriptsize{$r\circ \al$}}}\\
AC_{2m-*}(\widetilde{X},\pi^{-1}(D);\ZZ)&\xrightarrow{\text{\scriptsize{$\pi_*$}}}&AC_{2m-*}(X,D;\ZZ)
\end{matrix}
\] From this we see that $\pi_*$ is a quasi-isomorphism.
The compatibility of $\pi_*$ and the pairing $(\bullet,\bullet)$ follows from the definition.
\end{proof}
 By Lemma \ref{blow up} we can replace $X$ resp. $\bf H$ with $\widetilde{X}$
resp. the proper transform of $\bf H$.
Let $D_j\,(j=0,\cdots, s)$ be the irreducible components of $D$. We denote
the index set $\{0,\cdots, s\}$ by $S$. For a subset $J$ of $S$ we denote $\underset{j\in J}\cap D_j$
by $D_J$.

\noindent Let ${}_{dR}C^{*,*}$ be the double complex defined by 
\begin{equation}
\label{de Rham double}
{}_{dR}C^{pq}=\underset{p=\sharp J}\oplus A^q(D_J)(\log {\bf H}),\ d_1=d_D,\,d_1=d_{dR}\,(\text{the exterior derivative})
\end{equation}
where the differential $d_D$ is defined as follows: for an element $\omega=\underset{J\subset S}\oplus \omega_J$ of
${}_{dR}C^{*,*}$, we have
\[(d_D\omega)_{(\al_0<\cdots<\al_k)}=\sum_{i=0}^k(-1)^i\iota_{\al_i}^*\omega_{(\al_0<\cdots \widehat{\al_i}\cdots<\al_k)}.\]
Here the map $\iota_{\al_i}:\,D_{(\al_0<\cdots <\al_k)}\to D_{(\al_0<\cdots \widehat{\al_i}\cdots <\al_k)}$ is the inclusion map.
\begin{lemma}
\label{compact-total}
We have an isomorphism $H^j(A^*_c(U)(\log {\bf H}))\simeq H^j(s({}_{dR}C^{*,*}))$.
\end{lemma}
\begin{proof}
We denote by $\cal A^*(D_J)(\log \bf H)$ the complex of sheaves of $C^\infty-$ forms on $D_J$ with
logarithmic singularity along  $\bf H$. 
${}_{dR}\cal C^{*,*}$ be the double complex of sheaves
defined by
\[{}_{dR}\cal C^{pq}=\underset{p=\sharp J}\oplus i_*\cal A^q(D_J)(\log {\bf H})\]
where the map $i$ is the inclusion of $D_J$ into $X$, and the differentials are defined in the same way as (\ref{de Rham double}).
Let 
$\pi:\,\,X\to [0,1]$ be the map defined in (\ref{projection}), and set 
$V_n=\pi^{-1}([1/n,1])$ for $n\geq 1$. Let $\cal A^*_{V_n}(X)(\log {\bf H})$ be the complex of sheaves on $X$ of $C^\infty-$forms with log singularity along $H$ whose supports are contained
in $V_n$. Let $\cal A_c^* (X)(\log {\bf H})
=\underset{n}\varinjlim \,\cal A^*_{V_n}(X)(\log {\bf H})$. Then we have the equality
$A^*_c(U)(\log {\bf H})=\Gamma( X, \cal A_c^* (X)(\log {\bf H}))$. We will show that the inclusion
\[\cal A^*_c(U)(\log {\bf H})\hookrightarrow s({}_{dR} \cal C^{*,*})
\] is a quasi-isomorphism of complexes of sheaves. Then since these are complexes of fine sheaves, this inclusion
induces an isomorphism of cohomology groups
\[H^i(A^*_c(U)(\log {\bf H}))\to H^i(s({}_{dR}C^{*,*} ))\simeq H^i(X-\bf H,D,\CC).\]
The problem is local,  so let $x$ be a point on $D$. Near $x$ $X$ is isomorphic to
$U_x:=B^J\times B^K $ with $x$ the origin  where $B$ is the open unit disc in $\CC$, and $D=\{z_1\cdots z_{\sharp J}=0\}$. Suppose first that $J=\{1\}$. The complex 
\[\text{Cone} (i^*:\,\cal A^*(U_x)(\log {\bf H})
\to A^*(D_1)(\log {\bf H}))\]
 where $i$ is the inclusion map of $D_1$ into $U_x$, is quasi-isomorphic
to $\text{Ker}\,i^*=:\cal A_0^*$. We need to show that the quotient $\cal A_0^*/\cal A^*_c(\log {\bf H})$
is acyclic. We proceed as the proof of Poincare Lemma.  Let $F:\,[0,1]
\times U_x\to U_x$ be the deformation retract defined by $(t, (z_1,\cdots, z_{1+K}))
\mapsto ((1-t)z_1,z_2,\cdots, z_{1+K})$.  There are maps
\[g_0,\,g_1:\,U_x\to [0,1]\times U_x,\, g_i(p)=(i,p).\]
Let $h:\, A^{j+1}([0,1]\times U_x)
\to A^j(U_x)$ be the map given by the integration along the fibers of the projection
$[0,1]\times U_x\to U_x.$ Then we have the equality
\begin{equation}
\label{homotopy}
g_0^*-g_1^*=\pm(dh-hd).
\end{equation}  Suppose that $\phi$ is a cocycle of  $A_0^*(U_x)/A^*_c(U_x)(\log {\bf H})$ i.e.
$d\phi\in A^*_c(U_x)$. Taking $U_x$ sufficiently small we can assume that $d\phi=0$.
By applying \ref{homotopy} to the form $F^*\phi$ we see that
\[\phi=\pm dhF^*\phi .\] Since the restriction of $F$ to $D_1$ is constant with respect
to $t$, we see that $hF^*\phi\in A_0^j(U_x)$. This concludes the proof for $J=\{1\}$. 
we can proceed by induction on $\sharp J$. Suppose the assertion holds for the case
$\sharp J=k$ and consider the case $\sharp J=k+1$. We have the equality
\[\begin{array}{ll}
&s({}_{dR}C^{*,*})\\
= &
 \text{Cone}\bigl(i_1^*:\,s(\underset{K\subset J-\{1\}}\bigoplus \cal A^*(D_K))\to 
s(\underset{K\subset J-\{1\}}\bigoplus \cal A^*(D_{K\cup \{1\}}))\bigr).
\end{array}\]
By induction hypothesis, This is quasi-isomorphic to the complex
\[\text{Cone}\bigl(i_1^*:\,\cal A_c^*(X-\underset{j\neq 1}\cup D_j) \to
\cal A_c^*(D_1-\underset{j\neq 1}\cup D_j)\bigr).\]  By the same argument as in the proof of the
case where $\sharp J=1$, we can show that this complex is quasi-isomorphic to
$\cal A_c^*(U_x)$. 
\end{proof}
The complex $C_*(X)/C_*(D)$ is quasi-isomorphic to the simple complex associated to the
double complex $C(1)_{*,*}$ defined by
\[C(1)_{a,b}=\underset{J\subset S, \sharp J=a}\oplus C_b(D_J).\]
Here the second differential is $\delta$, the topological boundary, and 
 the first differential $\part_D$ is defined by 
\begin{equation}
\label{part-D}
\part_D( \ga_{(\al_0<\cdots <\al_p)})=\sum_{i=0}^p(-1)^i \iota_{\al_i}(\ga_{(\al_0<\cdots <\al_p)})
\end{equation}
where $\iota_{\al_i}:\,H_{\{(\al_0,\cdots, \al_p)\}}\hookrightarrow H_{\{\al_0,\cdots,\widehat{\al_i},
\cdots,\al_p\}}$ is the inclusion map. 
By Proposition \ref{prop: moving lemma},  $C(1)_{*,*}$ is quasi-isomorphic to the complex $C(2)_{*,*}$
defined by
\[C(2)_{a,b}=\underset{J\subset S, \sharp J=a}\oplus AC_b(D_J).\]
Hence the complex $AC^*(U,{\bf H})$ is quasi-isomorphic to the simple complex $s(C(3)^{*,*})$ associated
to the double complex $C(3)^{*,*}$ defined by
\[C(3)^{p,q}=\underset{\substack{I\subset T,\,\sharp I=p\\
J\subset S,\,\sharp J+i=2m-2p-q}}\oplus AC_i(H_I\cap D_J)),\,\,d_1=\part_{\bf H},\,d_2=\part_D+(-1)^{\sharp J}\delta.\]
The  differential of $s(C(3)^{*,*})$ is equal to
\[\part_{\bf H}+(-1)^{\sharp I}\part_D+(-1)^{\sharp I+\sharp J}\delta\]
on $AC_*(H_I\cap D_J)$.  
Let ${}_{top}C^{p,q}$  be the double complex defined by
\[{}_{top}C^{pq}=\underset{p=-\sharp J, q=2m-\sharp I-i}\oplus AC_i(H_I\cap D_J),\, d_1=\part_D,\,d_2=\part_{\bf H}+(-1)^{\sharp I}\delta\]
The map $s(C(3)^{*,*})\to s({}_{top}C^{*,*})$ defined by $\ga\mapsto (-1)^{\sharp I\sharp J}\ga$ for $\ga\in AC_*(H_I\cap D_J)$ 
is an isomorphism of complexes.
By Proposition \ref{C-S}, the pairing of Notation \ref{res} (2) induces a pairing
\[s({}_{top}C^{*,*})\otimes s({}_{dR}C^{*,*}) \to \CC[-2m],\,\,\ga\otimes \varphi\mapsto (-1)^{\e_1(\ga)+\e_2(\ga)}(\ga,\varphi)\]
which is a map of complexes.  Here the target is the complex $\CC$ concentrated in degree $2m$,
and the function $\e_2$ is defined as follows. If $\ga$ is on $D_J\cap H_I$, then
$\e_2(\ga)=\e_2(\dim \ga-\sharp I, \sharp J)$ where $\e_2(x,y)=xy+\frac{y(y+1)}2.$
We need to show that this pairing induces perfect pairing on cohomology.
The above pairing induces a map of double complexes
\[{}_{top}C^{pq}\to ({}_{dR}C^{-p\,2m-q})^\vee\]
which is compatible with the differentials up to sign.
There are spectral sequences
\[{}_{top}E_0^{pq}=\underset{p=-\sharp J, q=2m-\sharp I-j}\oplus AC_j(H_I\cap D_J)\Rightarrow H^{p+q}(s({}_{top}C^{*,*}))\]
and
\[{}_{dR}E_0^{pq}=
\underset{p=\sharp J}\oplus A^q(D_J)(\log {\bf H})\Rightarrow H^{p+q}(s({}_{dR}C^{*,*})).\]
The above pairing induces a map of spectral sequences 
\[{}_{top}E_r^{pq}\to ({}_{dR}E_r^{-p\,2m-q})^\vee\]
compatible with the differentials up to sign. So it suffices to show that the map
\[{}_{top}E_1^{pq}\to ({}_{dR}E_1^{-p\,2m-q})^\vee\]
is an isomorphism for each $(p,q).$ This follows from the assertion of Theorem \ref{duality} for each $D_J$
since 
$${}_{dR}E_1^{-p\,2m-q}=\underset{p=-\sharp J}\oplus H^{2m-q}_{dR}(D_J-{\bf H})$$
and 
\[{}_{top}E_1^{p\,q}=\underset{p=-\sharp J}\oplus H^{2(m+p)-(2m-q)}( D_J, {\bf H};\ZZ).\]

Until the end of the proof of Theorem \ref{duality} we assume that $D=\emptyset.$

\begin{lemma}
\label{reflexivity}
\begin{enumerate}

\item Let $K$ be a complex and $L$ be a full subcomplex of $K$. Let $K'$ be a derived subdivision
of $K$ mod $L\cup C(L,K)$. Then $|N(C(L,K), K')|$ is a regular neighborhood of $|C(L,K)|$.

\item Suppose that $L$ is a full subcomplex of $K$. If $K'$ is a subdivision of $K$ inducing a 
subdivision $L'$ of $L$, then we have $L'\triangleleft K'$.
\end{enumerate}
\end{lemma}
\begin{proof} (1). Since $L$ is a full subcomplex of $K$, we have
$L=C(C(L,K),K)$, and the assertion follows from the definition of a regular 
neighborhood.

(2). This is  \cite{RS} Lemma 3.3 (b).
\end{proof}
\begin{lemma}
\label{res of neighbor}
Let $K$ be a complex and let $L_1$ and $L_2$ subcomplexes of $K$ such that $L_2$ and $L_1\cup L_2$ are full subcomplexes of $K$. Then $N(L_1, K)\cap L_2$
equals $N(L_1\cap L_2, L_2)$.
\end{lemma}

\begin{proof}
Suppose that $\tau,\eta\in K$ such that $\tau\prec \eta$, $\tau\in L_2$ and $\eta\cap L_1\neq \emptyset.$ By the assumption $\si:=\eta\cap (L_1\cup L_2)$
is a face of $\eta$. If $\si\in L_2$ we are done since $\tau\prec \si$ and $\si\cap L_1\neq 
\emptyset$. If $\si\in L_1$ there is a vertex $v\in \si\cap L_2$ since $\eta\cap
L_2\neq \emptyset$. The simplex spanned by $\tau$ and $v$ is in $L_2$ since $L_2$ is a full subcomplex
of $K$.
\end{proof}
\begin{proposition}
\label{isom}
Let $(K,L)$ be a good triangulation of the pair
 $(X,\bf H)$ such that $|N(L,K)|$ is a regular neighborhood of $\bf H$ in $X$. 
Then the natural inclusion $C_*(C(L,K);\QQ)\to AC^{2m-*}(X,\bf H;\QQ)$ is a quasi-isomorphism. 
\end{proposition} 
\begin{proof}  Recall that we set ${\bf H}=H_0\cup \cdots \cup H_t$. We proceed by induction on $t$.  When $t<0$ i.e. $\bf H$ is empty there is nothing to prove. 
Assume that the assertion holds for $t=k$ and consider the case where $t=k+1$. We denote $H_0\cup \cdots \cup H_k$ by
$F_1$ and $H_{k+1}$ by $F_2$. We set $L_1=K\cap F_1$ and
$L_2=K\cap F_2$. Since $K$ is a good triangulation, we have 
 $L_1\lhd K$, $L_2\lhd K$ and $L_1\cup L_2\lhd K$. 

For a complex $K$ and a subcomplex $L$ of $K$, a derived subdivision
$K'$ of $K$ mod $L\cup C(L,K)$ is called a {\it derived of }$K$ {\it near} $L$. cf. 3.5 
\cite{RS}. If $L\lhd K$, then the polytope $|N(L,K')|$ is a regular neighborhood
of $|L|$ in $|K|$.
\begin{lemma}
\label{Lefschetz duality}
Let $K'$ be a derived of $K$ near $L_1$, and put $N_1=N(L_1,K')$ and $C_1=C(L_1,K')$. 
Let $\mu_{C_1}\in C_*(C_1,\part C_1)$ be the fundamental chain of $C_1$. Then the map
\[\cap \mu_{C_1},\, C^*(C_1,\dot{N_1};\QQ)\to C_*(C_1, \QQ)\]
is quasi-isomorphic. Here we denote $N_1\cap C_1$ by $\dot{N_1}$.
\end{lemma}
\begin{proof}
 By Lemma \ref{reflexivity} 
and Proposition 3.10 \cite{RS} $C_1:=C(C(L_1,K),K')$ is a manifold with the boundary
$\dot{N_1}$.  The assertion follows from Lefschetz duality Theorem 3.43 \cite{Hat}.
\end{proof}

\begin{notation}
\label{induced subdivision}
 Let $K$ be a complex and $L$ be a subcomplex of $K$. If $K'$ is a subdivision
of $K$, the subdivision of $L$ induced by $K'$ is denoted by $L'$. Similar notations will be used
for other superscripts than ${}'$.
\end{notation}
Let $\text{sd}C_1$ be the barycentric subdivision of $C_1$.  
We see that 
\begin{equation}
\label{full subity}
\text{sd}(L_2'\cap C_1)\cup \sd\dot{N_1} \lhd \sd C_1.
\end{equation} 
Let $C_1^{(2)}$ be a derived 
of $\sd C_1$ near $\sd(L_2'\cap C_1)$, and set
$N_2:=N(\sd (L_2'\cap C_1), C_1^{(2)})$. 
A simplex $\si$ of $N_1$ which meets $L_1$ but not contained in $L_1$ is of the
form $\tau_1\ast \tau_2$ where $\tau_1\in L_1$ and $\tau_2\cap L_1=\emptyset$, since $L_1\lhd K$. In $\dot{N_1}^{(2)}$ $\tau_2$ is replaced with a complex $T$. Let ${N_1}^{(2)}$ be the subdivision
of $N_1$ obtained by replacing each such simplex $\si$ with $\tau_1\ast T$. The union
$K^{(2)}:=C_1^{(2)}\cup N_1^{(2)}$ is a subdivision of $K$, and we have $N_1^{(2)}\cup N_2=N(L_1\cup L_2^{(2)},K^{(2)})$. 
\vskip 1cm

\begin{center}

\includegraphics[width=10cm]{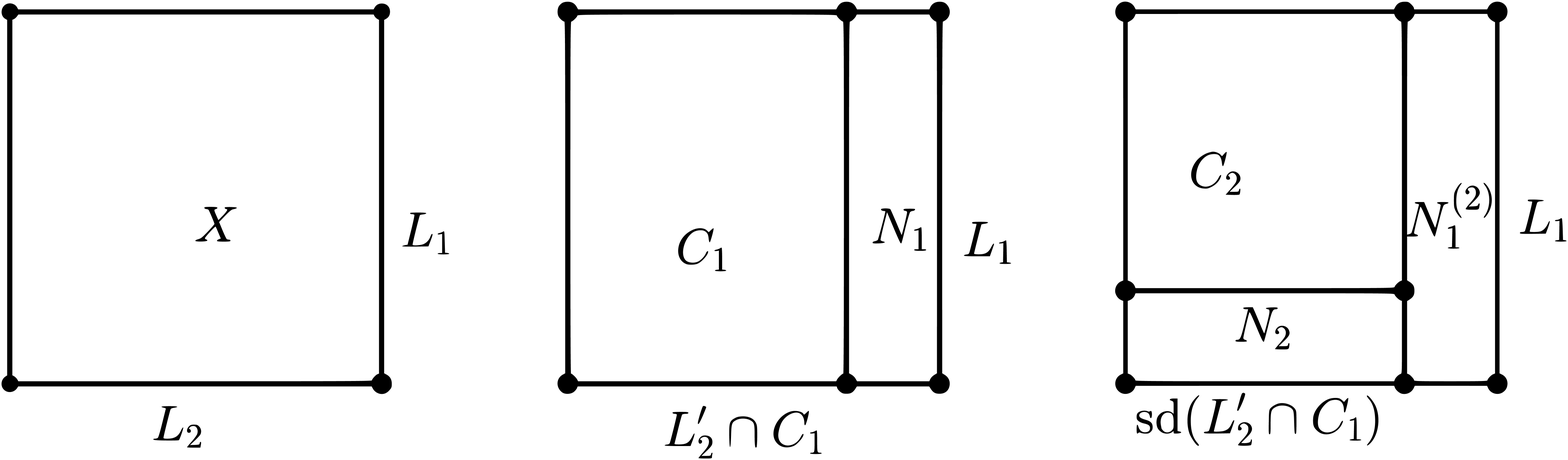}

\end{center}
\vskip 1cm

\begin{lemma}
\label{n2l2}
\begin{enumerate}
\item  Let
$C_2:=C({\rm sd}(L_2'\cap C_1), C_1^{(2)};\QQ)$ and set $\mu_{C_2}\in C_*(C_2, \part C_2;\QQ)$  be the fundamental cycle of $C_2$. Then the map
\[\cap \mu_{C_2}:\, C^*(K^{(2)},(N_1)^{(2)}\cup N_2;\QQ)\to C_*(C_2, \QQ)\]
is quasi-isomorphic.

\item The map
\[i^*:\,C^*(N_2, \dot{N_1}^{(2)}\cap N_2;\QQ)\to C^*({\rm sd}(L_2\cap C_1), (\dot{N_1}\cap L_2)^{(2)};
\QQ)\]
given by the restriction is a quasi-isomorphism. 
\end{enumerate}
\end{lemma}
\begin{proof} 
(1). By the construction
we have 
\[C^*(K^{(2)},(N_1)^{(2)}\cup N_2;\QQ)=C^*(C_2^{(2)}, \part C_2^{(2)};\QQ).\]
The assertion follows from Lefschetz duality Theorem 3.43 \cite{Hat}.

(2). By (\ref{full subity}) and Lemma \ref{reflexivity} (2) we see that 
\[{\rm sd}(L_2'\cap C_1)\cup \dot{N_1}^{(2)} \lhd C_1^{(2)}.\]
Hence by Lemma \ref{res of neighbor} we see that 
$$\dot{N_1}^{(2)}\cap N_2=N(\sd (L_2\cap \dot{N_1}), \dot{N_1}^{(2)})$$
 and so  $|\dot{N_1}^{(2)}\cap N_2|$ is a regular neighborhood of
$F_2\cap |\dot{N_1}|$ in $|\dot{N_1}|$. The assertion follows from this, since 
a polytope is a deformation retract of its regular neighborhood .
\end{proof}
 
Let $K^{(3)}$ be a derived of $K^{(2)}$ near $L_1$, and set
${{\cal N}_1}:=N(L_1, K^{(3)})$. We see that $|{{\cal N}_1}|$ is a regular neighborhood of $F_1$
in $X$. We denote
${{\cal C}_1}=C(L_1, K^{(3)})$. 
Since $|N_1|$ and $|{{\cal N}_1}|$ are regular neighborhoods of $F_1$, the map 
\[C^*(K^{(3)},{N_1};\QQ)\to C^*(K^{(3)},{{\cal N}_1};\QQ)\]
induced by the inclusion ${{\cal N}_1}\to N_1$ is quasi-isomorphic. By Lemma \ref{res of neighbor}
we have $ N_1\cap L_2'=N(L_1\cap L_2', L_2')$ and $ {\cal N}_1\cap L_2^{(3)}=
N(L_1\cap L_2^{(3)}, L_2^{(3)})$, so that $ |N_1\cap L_2'|$ and
$ |{\cal N}_1\cap L_2^{(3)}|$ are regular neighborhoods of $F_1\cap F_2$ in $F_2$.
 Hence the natural map
\begin{equation}
\label{neighbor comparison}
C^*\bigl(L_2^{(3)},({N_1}\cap L_2)^{(3)};\QQ\bigr)\to C^*\bigl(L_2^{(3)},({{\cal N}_1}\cap L_2)^{(3)};\QQ\bigr)
\end{equation}
is also quasi-isomorphic.
\begin{lemma}
There exists a subdivision $K^{(4)}$ of $K^{(3)}$ which satisfies the following:
There is a subcomplex $\widetilde{C}_0$ resp. 
$\widetilde{C}$ of $C_*(C_2^{(4)}, \QQ)$ resp. of $C_*({{\cal C}_1}^{(4)}, \QQ)$ which satisfies the following conditions.
\begin{enumerate}
\item The inclusion of $C_2^{(4)}$ into $\cal C_1^{(4)}$ induces  
an inclusion of $\widetilde{C}_0$ into $\widetilde{C}.$
\item The inclusion $i_0:\,\,\widetilde{C}_0\to C_*(C_2^{(4)}, \QQ)$
resp. $i_2:\,\,\widetilde{C}\to C_*({{\cal C}_1}^{(4)}, \QQ)$ is a quasi-isomorphism.

 \item For each chain $\ga\in \widetilde{C}$, $|\ga|$ and $|\delta \ga|$ meet all faces 
properly.
\end{enumerate}
\end{lemma} 
\begin{proof}

For each $j\geq 0$ take a basis 
of $H_j(C_2^{(3)},\QQ)$ as follows. First take a basis 
${\bf k}_j=\{k_{j,\al}\}_\al$ of the kernel of the map 
$(\iota_2)_*:\,H_j(C_2^{(3)},\QQ)\to H_j(C^{(3)}_1, \QQ)$ induced by the inclusion. 
Extend this to a basis ${\bf k}_j\cup {\bf l}_j=\{k_{j,\al}\}_\al\cup \{l_{j,\beta}\}_\beta$ of $H_j(C_2^{(3)},\QQ)$. Take a lift ${\bf K}_j\cup {\bf L}_j=\{K_{j,\al}\}_\al\cup \{L_{j,\beta}\}_\beta$
of ${\bf k}_j\cup {\bf l}_j$ in $C_j(C_2^{(3)},\QQ)$ and define $\widetilde{C_0}$ to be the complex generated by them for each $j$. 
Extend $\iota_2({\bf l}_j)$ to a basis $\iota_2({\bf l}_j)\cup {\bf m}_j
=\{\iota_2(l_{j,\beta})\}_\beta\cup \{m_{j,\xi}\}_\xi$ of $ H_j(C_1^{(3)}, 
\QQ)$.
Take a lift ${\bf M}_j=\{M_{j,\xi}\}_\xi$ of ${\bf m}_j$ in $ C_j(C_1^{(3)}, 
\QQ)$. 
Let ${\bf \Gamma}_j=\{\Gamma_{j,\al}\}_\al$ be chains in $ C_{j+1}(C_1^{(3)}, 
\QQ)$ such that $\delta \Gamma_{j,\al}=\iota_2(K_{j,\al})$ for each $\al$.
Set ${\cal X}=\{\underset{j,\al}\cup \iota_2(|K_{j,\al}|)\}\cup 
\{\underset{j, \beta}\cup \iota_2(| L_{j,\beta}|)\}\cup \{\underset{j,\xi}\cup |M_{j,\xi}|\}
$ and $X_0=\{\underset{j,\al}\cup \iota_2(|K_{j,\al}|)\}\cup 
\{\underset{j,\beta}\cup \iota_2(|L_{j,\beta}|)\}$. By the construction
$\cal X$ does not meet $|{\cal N}_1|$ so that  ${\cal X}-X_0\subset \overset{\circ}{|{\cal C}_1|}.$  Hence 
by Theorem \ref{Zeeman moving} 
$\cal X$ can be  moved in $\overset{\circ}{|{\cal C}_1|}$ by an isotopy which keeps $X_0$ fixed, so that $h_1({\cal X}-X_0)$ meet $F_2$ properly. Similarly the chains $\Gamma_{j,\al}$ can be moved keeping $\{\underset{j}\cup \iota_2(|{\bf K}_j|)\}\cup 
\{\underset{j}\cup \iota_1(|{\bf L}_j|)\}$ fixed, so that
$h_1(|\Gamma_{j,\al}|)$ meet $F_2$ properly.  Let $\widetilde{C}$ be the complex generated in degree $j$ by $\{h_1(\Gamma_{j-1,\al})\}_\al, 
\{\iota_2(K_{j,\al})\}_\al, \{\iota_2(L_{j,\beta})\}_\beta$ and $\{h_1(M_{j,\xi})\}_\xi$.
\end{proof}

We take a good ordering of $K^{(4)}$ with respect to $L_2^{(4)}.$ We have the following diagram.
\[
\begin{matrix}
C^*(K, N_1\cup N_2)&\xrightarrow[]{\iota_0}&\text{Cone}\,\bigl(C^*(K, N_1)&\xrightarrow{\text{res}}&C^*(N_2, N_1\cap N_2)\bigr)\\
\phantom{=}\downarrow{=}&&\phantom{\text{n}_2}\downarrow{\text{n}_2}
&&\phantom{\text{n}_1}\downarrow{\text{n}_1}\\
C^*(K, N_1\cup N_2)&\xrightarrow[]{\iota_1}&\text{Cone}\,\bigl(C^*(K,{{\cal N}_1})&\xrightarrow{i^*}&C^*(L_2,{{\cal N}_1}\cap L_2)\bigr)\\
\phantom{\cap \mu_{C_2}}\downarrow{\cap \mu_{C_2}}&&\phantom{\cap \mu_{{{\cal C}_1}}}\downarrow{\cap \mu_{{{\cal C}_1}}}&
H&\phantom{\cap \mu_{L_2}}\downarrow{\cap \mu_{L_2}}\\
C_*(C_2)&\xrightarrow[]{\iota_2}&\text{Cone}\,\bigl(C_*({\cal C}_1)&\xrightarrow{\cap T_{F_2}}& C_*(L_2\cap {{\cal C}_1})\bigr)\\
\phantom{i_0}\uparrow{i_0}&&\phantom{i_2}\uparrow{i_2}&&\uparrow{=}\\
\widetilde{C_0}&\xrightarrow{\iota_3}&\text{Cone}\,\bigl(\widetilde{C}\quad\quad\quad&\xrightarrow{\cap T_{F_2}}&C_*(L_2\cap  {{\cal C}_1})\bigr)\\
\phantom{i_0}\downarrow{i_0}&&\phantom{\text{incl}}\downarrow{\text{incl}}&&\phantom{\iota_5}\downarrow{\iota_5}\\
C_*(C_2)&\xrightarrow{\iota_4}&\text{Cone}\,\bigl(AC'(X,F_1)\quad&\xrightarrow{\cap T_{F_2}}&AC(F_2, F_1\cap F_2)\bigr)
\end{matrix}
\] 
Here we omitted the superscript ${}^{(4)}$ which should be put on every simplicial complex
appearing in the diagram, and the coefficient $\QQ$.
The chain $\mu_{C_2}$ resp. $\mu_{\cal C_1}$ resp. $\mu_{L_2}$ is the fundamental cycle of $C_2$ resp. $\cal C_1$ resp. $L_2$. By Lefschetz duality Theorem 3.43 \cite{Hat}, 
the maps
$\cap\mu_*$ for $*=C_2$, $\cal C_1$ and $L_2$ are quasi-isomorphic. The map res
is the restriction. The map $\text{n}_2$ is the inclusion, and $\text{n}_1$ is the composition of $i^*$ of Lemma \ref{n2l2} (2) and 
the map in (\ref{neighbor comparison}). 
The maps $\iota_1$ through $\iota_5$ are induced by  inclusions.  All the squares except for the one with an $H$ in it is commutative. The one
with $H$ is homotopy commutative. The complex $AC'(X,F_1)$ is defined as follows:
Let $AC(X,F_1)$ be the complex defined as in Definition \ref{doubleAC}
for the face $F_1=H_0\cup \cdots \cup H_k$. $AC'(X,F_1)$ is the subcomplex of
$AC(X,F_1)$ which consists of the chains $\ga$ such that $|\ga|$ and $|\delta\ga|$
meet $F_2$ properly. By Theorem \ref{Zeeman moving} $AC'(X,F_1)$ 
is quasi-isomorphic to $AC(X,F_1)$. The map $\iota_0$ is quasi-isomorphic. Since 
$\text{n}_1$ and $\text{n}_2$ are quasi-isomorphic, $\iota_1$ is quasi-isomorphic. 
The maps $\iota_2$ and $\iota_3$ are quasi-isomorphic by similar reasons.

\noindent We have the following commutative diagram.
\[
\begin{matrix}
C_*({{\cal C}_1})&\xrightarrow{i_1}&AC(X,F_1)\\
\phantom{i_2}\uparrow{i_2}&&\uparrow{i_3}\\
\widetilde{C}&\xrightarrow{\text{incl}}&AC'(X,F_1)
\end{matrix}
\]
Here the maps $I_1$ and $i_3$ are natural inclusions. The map $i_1$ is a quasi-isomorphism by the induction hypothesis, and the maps $i_2$ and $i_3$ are quasi-isomorphic by the
construction. Hence the map incl is also quasi-isomorphic. Since the map 
$\iota_5$ is quasi-isomorphic by the induction hypothesis,  the map $\iota_4$ is a quasi-isomorphism. The complex 
\[\text{Cone}\,\bigl(AC'(X,F_1)\xrightarrow{\cap T_{F_2}}AC(F_2, F_1\cap F_2)\bigr)\]
is $AC(X, \bf H)$ for $t=k+1$. 
Let $(K^{(2)})'$ be a derived of $K^{(2)}$ near $L_1\cup L_2^{(2)}$. Set $\ov{N_2}=N(L_1\cup L_2, (K^{(2)})')$
and $\ov{C_2}=C(L_1\cup L_2, (K^{(2)})')$.  Then by Lemma \ref{reflexivity} and the assumption that $L_1\cup L_2\lhd K$, 
$|\ov{C_2}|$ is a regular neighborhood of $|C_2|$. It follows that
the map of complexes
\[C_*(C_2^{(2)})\to C_*(\ov{C_2})\]
induced by the inclusion $C_2^{(2)}\to \ov{C_2}$ is a quasi-isomorphism.
By the uniqueness of regular neighborhood Theorem 3.24 \cite{RS}, we have the assertion. 
\end{proof}
\noindent We have the following commutative diagram.
\[
\begin{matrix}
A^*(X)(\log {\bf H})&\otimes& AC^{2m-*}(X,\bf H)&\xrightarrow{(\bullet, \bullet)}&\CC\\
\phantom{=}\uparrow{=}&&\uparrow&&\phantom{=}\uparrow{=}\\
A^*(X)(\log {\bf H})&\otimes&C_*(\ov{C_2})&\xrightarrow{(\bullet, \bullet)}&\CC.
\end{matrix}
\]
Here the map $(\bullet, \bullet)$ is the one defined in Notation \ref{res}. In this diagram all the vertical arrows are quasi-isomorphic, and the bottom line of this diagram induces the duality pairing
\[H^k_{dR}(X-{\bf H})\otimes H_k(|\ov{C_2}|;\CC)\to \CC\]
by de Rham Theorem.
Hence the pairing of the first line also induces  perfect pairings on cohomology groups.
\end{proof}
\section{The Abel-Jacobi map for higher Chow cycles}  In the following we consider 
the case where  $X=Y\times (\PP^1)^n$
with $Y$ a smooth projective complex variety of dimension $m$. Let $z_i$ $(i=1,\cdots,n)$ be the affine 
coordinates of $(\PP^1)^n$. The faces are intersections of subvarieties $\{z_i=0\}$ and $\{z_i=\infty\}$ for $i=1,\cdots, n$. 
The divisor at infinity $D$ equals $\overset{n}{\underset{i=1}\cup}\{z_i=1\}$. The complement
$(\PP^1)^n-D$ is denoted by $\square^n$, and the union of all faces of $\s$ is denoted
by $\part\s$.
For $1\leq i\leq n$ and $\beta\in \{0,\infty\}$, let $H_{i,\beta}$
be the face $\{z_i=\al\}$. We define the ordering of codimension one faces of $\square^n$ by
\[H_{1,0}<H_{1,\infty}<H_{2,\infty}<H_{2,0}<H_{3,0}<H_{3,\infty}<H_{4,\infty}<\cdots.\]
Here the numbering starts from 0. We denote the set of the indices
$\{(1,0),(1,\infty),\cdots, (n,\infty)\}$ of codimension one faces of $\s$ by $T$.
For a subset $I$ of $T$, the face $\underset{i\in I}\cap H_i$ is denoted by $H_I$.

\noindent The group $G_n= \{\pm 1\}^n\rtimes S_n$\index{$G_n$} acts naturally on  $\s$ as follows. The subgroup
$\{\pm 1\}^n$ acts by the inversion of the coordinates $z_i$, and  the symmetric group $S_n$ acts by
permutation of $z_i$'s. 
Let $\sign: G_n \to \{\pm 1\}$ be the character which sends 
$(\eps_1, \cdots, \eps_n; \sigma)$ to $\eps_1\cdot \cdots\cdot\eps_n\cdot\sign(\sigma)$. 
The idempotent $\Alt=\Alt_n:=({1}/{|G_n|})\sum_{g\in G_n} \sign(g) g $ \index{$\Alt$}
in the group ring $\QQ[G_n]$ is called the alternating 
projector. For a $\Q[G_n]$-module M, 
the submodule
$$
M^{\alt}=\{\al \in M\mid \Alt \al=\al\}=\Alt(M)
$$
\index{$M^{\alt}$}
is called the alternating part of $M$. The product $\s\times Y$ resp. $\part\s\times Y$
will be written $\s_Y$ resp. $\part\s_Y$ for short.
We recall the definition of the cubical version of higher Chow groups and of the Abel-Jacobi map.
We denote by $z^p(X,n)$ the free $\Q$-vector space
generated by subvarieties of $\s_Y$ meeting faces properly. The differential  is defined to be 
$$\part_\square=\sum_{j=1}^n(-1)^{j-1}(\part_0^j-\part_\infty^j):\,z^p(Y,n)^{\alt}\to z^p(Y,n-1)^{\alt}$$
Here the map $\part_i^0$ resp. $\part_i^\infty$ is the face map of the face $H_{i,0}$ resp. $H_{i,\infty}.$
By definition 
\[CH^p(Y,n)=H_n(z^p(Y,*)^{\alt}).\]

\noindent 
\begin{lemma}
\label{purity}
 Let $Z$ be an element of $z^p(Y,n)^{\alt}$.
\begin{enumerate}
\item $H^j_{|Z|}(\square^n_Y, \part \square^n_Y;\QQ(p))=0$ for $j<2p$.

\item There is an isomorphism
$$H^{2p}_{|Z|}(\square^n_Y, \part \square^n_Y;\QQ(p))= \underset{i}\cap \Ker( \part_i: \,\,
H^{2p}_{|Z|}(\s_Y,\QQ(p))\to H^{2p}_{|Z|\cap {H_i}_Y}({H_i}_Y,\QQ(p)))$$
Here the intersection on the right hand side is taken over all codimension one faces of $\s$.
\end{enumerate}
\end{lemma}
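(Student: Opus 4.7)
The plan is to deduce both statements from the Mayer--Vietoris spectral sequence associated to the closed cover $\partial\square^n_Y=\bigcup_i (H_i)_Y$, combined with the cohomological purity for the local cohomology groups $H^*_{W}(V;\QQ(p))$ that holds whenever $W$ is a closed subvariety of codimension $\geq p$ in a smooth variety $V$.

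First I would record the codimension estimates that follow from $Z\in z^p(Y,n)^{\alt}$. Since $Z$ meets each face properly, for every nonempty subset $I\subseteq T$ the intersection $|Z|\cap (H_I)_Y$ has codimension exactly $p$ in the smooth ambient variety $(H_I)_Y$. By purity this gives
\[
H^b_{|Z|\cap (H_I)_Y}\bigl((H_I)_Y;\QQ(p)\bigr)=0\quad\text{for }b<2p,
\]
and in degree $b=2p$ the group is freely generated by the irreducible components of $|Z|\cap(H_I)_Y$. In particular $H^b_{|Z|}(\square^n_Y;\QQ(p))=0$ for $b<2p$.

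Next I set up the Mayer--Vietoris spectral sequence for the closed cover of $\partial\square^n_Y$ by the $(H_i)_Y$, with supports on $|Z|\cap\partial\square^n_Y$:
\[
E_1^{a,b}=\bigoplus_{|I|=a+1}H^b_{|Z|\cap (H_I)_Y}\bigl((H_I)_Y;\QQ(p)\bigr)\ \Longrightarrow\ H^{a+b}_{|Z|\cap\partial\square^n_Y}\bigl(\partial\square^n_Y;\QQ(p)\bigr),
\]
whose $d_1$ differential is the alternating sum of restrictions. The vanishing just established forces $E_1^{a,b}=0$ whenever $b<2p$. Hence for total degree $j<2p$ the spectral sequence vanishes, and in total degree $2p$ only $E_1^{0,2p}=\bigoplus_i H^{2p}_{|Z|\cap(H_i)_Y}((H_i)_Y;\QQ(p))$ survives. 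Consequently
\[
H^j_{|Z|\cap\partial\square^n_Y}(\partial\square^n_Y;\QQ(p))=0\ \text{for }j<2p,
\]
and $H^{2p}_{|Z|\cap\partial\square^n_Y}(\partial\square^n_Y;\QQ(p))$ injects into $\bigoplus_i H^{2p}_{|Z|\cap(H_i)_Y}((H_i)_Y;\QQ(p))$ as the kernel of $d_1$.

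Finally I would combine this with the long exact sequence of the pair $(\square^n_Y,\partial\square^n_Y)$ with support on $|Z|$:
\[
\cdots\to H^{j-1}_{|Z|\cap\partial\square^n_Y}(\partial\square^n_Y)\to H^j_{|Z|}(\square^n_Y,\partial\square^n_Y)\to H^j_{|Z|}(\square^n_Y)\to H^j_{|Z|\cap\partial\square^n_Y}(\partial\square^n_Y)\to\cdots
\]
For $j<2p$ both flanking groups vanish, which proves (1). For $j=2p$ the same sequence yields the exact sequence
\[
0\to H^{2p}_{|Z|}(\square^n_Y,\partial\square^n_Y;\QQ(p))\to H^{2p}_{|Z|}(\square^n_Y;\QQ(p))\xrightarrow{\ \rho\ } H^{2p}_{|Z|\cap\partial\square^n_Y}(\partial\square^n_Y;\QQ(p)),
\]
and identifying $\rho$ with the collection of restrictions to the codimension one faces gives (2). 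The one point to be careful about is the identification of $\rho$ with $\sum_i\partial_i$: by functoriality of local cohomology the composition of $\rho$ with the edge map into $E_1^{0,2p}$ is exactly $(\partial_i)_{i}$, and because the edge map is injective (all higher $E_1^{a,2p-a}$ vanish) the kernel of $\rho$ coincides with $\bigcap_i\ker(\partial_i)$. This verification of the edge map is the only delicate point; the rest is bookkeeping.
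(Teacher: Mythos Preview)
Your argument is correct and is essentially the same as the paper's, just organized slightly differently. The paper runs a single spectral sequence
\[
E_1^{a,b}=\bigoplus_{\sharp I=a} H^b_{|Z|\cap H_I}(H_I;\QQ(p))\ \Longrightarrow\ H^{a+b}_{|Z|}(\square^n_Y,\partial\square^n_Y;\QQ(p)),
\]
where $I=\emptyset$ is allowed (so $E_1^{0,b}=H^b_{|Z|}(\square^n_Y)$), and reads off both statements from the vanishing $E_1^{a,b}=0$ for $b<2p$. You instead split this into two pieces: a Mayer--Vietoris spectral sequence for $\partial\square^n_Y$ and the long exact sequence of the pair. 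These are the same computation, with your version making the role of the pair sequence and the edge map more explicit. One minor quibble: the intersection $|Z|\cap(H_I)_Y$ need only have codimension $\geq p$ in $(H_I)_Y$ (it could be empty), not ``exactly $p$'', but this is all that purity requires.
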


\begin{proof}
There is a spectral sequence
\[E^{ab}_1=\underset{\sharp I=a}\oplus H^b_{|Z|\cap H_I}(H_I;\QQ(p))\Rightarrow H^{a+b}_{|Z|}(\square^n_Y,\part \square^n_Y;\QQ(p)).\]
Since $|Z|$ meets every face properly, we have
$H^b_{|Z|\cap H_I}(H_I;\QQ(p))=0$  for  $j<2p$  and 
$$
\begin{array}{ll}
&H^{2p}_{|Z|}(\square^n_Y, \part \square^n_Y;\QQ(p))\\
=&E_2^{0\,2p}\\
=&\Ker(\underset{i}\oplus \part_i:\,H^{2p}_{|Z|}(\s_Y,\QQ(p))\to \underset{i}\oplus 
H^{2p}_{|Z|\cap {H_i}_Y}({H_i}_Y,\QQ(p))).
\end{array}
$$ The assertion follows from this.
\end{proof}   
There is a localization sequence of mixed Hodge structures
\[
\begin{array}{l}
0\to H^{2p-1}(\square^n_Y, \part \square^n_Y;\QQ(p))\overset{\beta}\to  H^{2p-1}(\square^n_Y-|Z|, \part \square^n_Y-|Z|;
\QQ(p))\\\to H^{2p}_{|Z|}(\square^n_Y, \part \square^n_Y; \QQ(p))
\overset{\iota}\to  H^{2p}(\square^n_Y, \part \square^n_Y; \QQ(p)).
\end{array}\]
As we will see shortly, there is an isomorphism 
$H^j((\square^n_Y, \part \square^n_Y; \QQ(p))\simeq H^{j-n}(Y,\QQ(p))$. The map $\beta$
induces an isomorphism
\begin{equation}
\label{Int Jac}
\begin{array}{rl}
J^{p,n}(Y):=&\displaystyle\frac{H^{2p-1-n}(Y,\CC)}{F^pH^{2p-1-n}(Y,\CC)+H^{2p-1-n}(Y,\QQ(p))}\\
\simeq &\displaystyle \frac{H^{2p-1}(\square^n_Y, \part \square^n_Y;\CC)}{F^pH^{2p-1}(\square^n_Y, \part \square^n_Y;\CC)
+H^{2p-1}(\s_Y,\part \s_Y;\QQ(p))}\\
\simeq& \displaystyle \frac{H^{2p-1}(\square^n_Y-|Z|, \part \square^n_Y-|Z|; \CC)}
{F^pH^{2p-1}(\square^n_Y-|Z|, \part \square^n_Y-|Z|; \CC)+H^{2p-1}(\s_Y, \part \s_Y;\QQ(p))}.
\end{array}
\end{equation}
 Suppose that $Z\in \Ker\part_\square\subset z^p(Y,n)^{\alt}.$ Since $Z\in z^p(Y,n)^{\alt}$, we have $\part_i Z=0$ for each codimension one face
$H_i$ and so we have a class $cl(Z)\in H^{2p}_{|Z|}(\square^n_Y, \part \square^n_Y;\QQ(p))$ .   If $Z$ is homologous to zero
i.e. $\iota(cl(Z))=0$,  there is a class $\widetilde{cl(Z)}\in H^{2p-1}(\square^n_Y-|Z|, \part \square^n_Y-|Z|;
\QQ(p))$ whose image under the boundary map is $cl(Z)$. 
We denote  by $z^p_{\hom}(Y,n)$ the subspace $\Ker \part_\square\cap \Ker \iota$
of $z^p(Y,n)^{\alt}$.
\begin{definition}
\label{Abel-Jacobi}
The map 
$$\Psi^{p,n}: z^p_{\hom}(Y,n)\to J^{p,n}(Y) ,\,\,Z\mapsto \rm{ the\,\, class\,\, of\,\, } \widetilde{cl(Z)}$$
is called the Abel-Jacobi map.
\end{definition}
\begin{remark} One can also define the Abel-Jacobi map in terms of Deligne-Beilinson cohomology.
The equality of the two definitions is proved in Theorem 7.11 \cite{E-V}.
\end{remark}
 
\subsection{An explicit description of the Abel-Jacobi map}
We will give a more explicit description of the map $\Psi^{p,n}$. Let $Z$ be an element of
$z^p_{\hom}(Y,n)$.  We replace the complex $AC^*(\square^n_Y,
\part\square^n_Y)$ with a smaller complex $AC^*_\square(\s_Y)$. In the following, the coefficients of the complex
$AC^*$ will be $\QQ$.
\begin{definition}
\label{cubical complex}
Let $AC_\square^*(\square^n_Y)$ be the simple complex associated to the double complex
\[AC_\square^{a,b}(
\square^n_Y)= AC_{2(m+n-a)-b}((\PP^1)^{n-a}_Y, D_Y)^{\alt} \quad (a\geq 0)\]
where the first  differential is $\part_\square$ and the second differential  is $\delta$. 
\end{definition}
 
\begin{proposition}

Let the map $\sum:AC^k(\s_Y,\part\s_Y)\to AC_\square^k(\s_Y)$ be the map
defined as 
$$\ga_{(\al_0<\cdots <\al_p)}\mapsto (-1)^{\overset{p}{\underset{j=0}\sum}\al_j}\ga.$$
Then the map $\Alt\circ \sum$ is a quasi-isomorphism.
\end{proposition}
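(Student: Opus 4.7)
My plan has three stages: verify $\mathrm{Alt}\circ\sum$ is a chain map by tracking signs, compute the cohomology of each side independently via spectral sequences, and then identify the induced map on cohomology.

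For the chain-map step, the canonical order-preserving identification $H_I\cong(\mathbb{P}^1)^{n-|I|}_Y$ on each summand intertwines $\delta$ with $\delta$, so I only need to compare the horizontal differentials. For $\gamma$ on $H_I$ with $I=(\alpha_0<\cdots<\alpha_p)$, $\partial_{\mathbf H}$ inserts a new index $\beta\in T\setminus I$ with Cech sign $(-1)^i$ (where $i$ is the position of $\beta$ in the reordered tuple), while $\partial_\square=\sum_j(-1)^{j-1}(\partial_0^j-\partial_\infty^j)$ uses the coordinate index $j$ and the side of $\beta\in\{0,\infty\}$. The prescribed ordering $(1,0)<(1,\infty)<(2,\infty)<(2,0)<\cdots$ on $T$ is designed precisely so that the weight $(-1)^{\sum\alpha_j}$ in $\sum$ converts one sign system into the other; a single-insertion bookkeeping check finishes this step, and $\mathrm{Alt}$ preserves the chain-map property by $G_{n-p}$-equivariance of the differentials of $AC^*_\square$.

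For the cohomology computation, Theorem \ref{cohom isom} gives
$$H^*(AC^*(\square^n_Y,\partial\square^n_Y))\cong H^*(\square^n_Y,\partial\square^n_Y;\QQ)\cong H^{*-n}(Y;\QQ),$$
the latter by K\"unneth together with the fact that $H^*(\square^n,\partial\square^n;\QQ)$ is concentrated in degree $n$ with value $\QQ$. For the target, I would filter $AC^*_\square$ by the first index; Theorem \ref{cohom isom} applied to each $(\mathbb{P}^1)^{n-a}_Y$ with empty $\mathbf H$ and divisor $D_Y$ identifies
$$E_1^{a,b}=H^{2(m+n-a)+b}\bigl((\mathbb{P}^1)^{n-a}_Y-D_Y;\QQ\bigr)^{\mathrm{Alt}_{G_{n-a}}}.$$
Since $(\mathbb{P}^1)^{n-a}-D_{n-a}\cong\mathbb{A}^{n-a}$ is contractible, this cohomology is $H^*(Y;\QQ)$ on which $G_{n-a}$ acts trivially, so $\mathrm{Alt}_{G_{n-a}}$ kills every column with $a<n$ and leaves only $E_1^{n,b}=H^b(Y;\QQ)$. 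The spectral sequence collapses, yielding $H^{n+b}(AC^*_\square)\cong H^b(Y;\QQ)$. Both sides therefore compute $H^{*-n}(Y;\QQ)$.

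The main obstacle is the final step, showing that the induced map on cohomology is an isomorphism rather than merely an abstract coincidence of dimensions. My plan is to trace a K\"unneth fundamental class: a top-dimensional admissible chain representing the generator of $H^n(\square^n,\partial\square^n;\QQ)$ distributes under iterated $\partial_{\mathbf H}$ across the columns of the source double complex, and its $a=n$ component (a chain on $Y$ itself) is sent by $\sum$ to a generator of the $a=n$ column of the target with the correct sign dictated by Step 1, after which $\mathrm{Alt}$ acts as the identity since $G_0$ is trivial. Naturality in $Y$ then forces the induced map $H^{*-n}(Y)\to H^{*-n}(Y)$ to be the identity, completing the quasi-isomorphism. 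This final identification is where the careful sign conventions of the chain-map step play their critical role.
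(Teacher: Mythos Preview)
Your approach is essentially the same as the paper's: compute both sides via spectral sequences filtered by the face-index $a$, see that each collapses to $H^{*-n}(Y;\QQ)$, and identify the induced map. The paper's execution of the final step is cleaner than your ``trace a fundamental class'' plan, though. Since $\Alt\circ\sum$ sends the summand indexed by $I$ with $|I|=a$ to the single summand $AC_*((\PP^1)^{n-a}_Y,D_Y)^{\alt}$ at the same $a$, it is a \emph{filtered} map and hence induces a morphism of spectral sequences; one then only has to observe that the map on $E_2^{n,b}$ (where both sides are $H^b(Y;\QQ)$) is an isomorphism. This replaces your naturality-and-generator argument with a one-line structural observation.

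Two small points. First, your displayed $E_1^{a,b}=H^{2(m+n-a)+b}(\cdots)$ has the wrong superscript: Theorem~\ref{cohom isom} with empty $\mathbf H$ gives $H^b(\square^{n-a}_Y;\QQ)^{\alt}$, not $H^{2(m+n-a)+b}$; your subsequent use of contractibility of $\square^{n-a}$ is correct regardless. Second, your argument that ``$\Alt$ kills every column with $a<n$'' is right and matches the paper, but note this already gives vanishing at $E_1$ on the target side, not just $E_2$, which is what makes the $E_2$-comparison so easy once you use the map-of-spectral-sequences viewpoint.
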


\begin{proof}
We can argue as in \cite{Sch} 1.7. That the map $\Alt\circ \sum$ is a map of complexes can be checked directly.
The cohomology group $H^k(AC(\s_Y,\part\s_Y))$ can be computed in terms of the
spectral sequence
\[E_0^{ab}=\underset{I\subset T,\, \sharp I=a}\oplus
AC_{2(m+n-a)-b}((\PP^1)^{n-a}_Y,D_Y).\]
By the homotopy invariance one sees that 
$$E_1^{a b}=\underset{I\subset T,\, \sharp I=a}\oplus H^b(Y, \QQ)$$ 
and 
$$E_2^{ab}=H^b(Y,\QQ)\otimes H^a(\square^n, \part \square^n;\QQ)=
\begin{cases}
0& a\neq n\\
 H^b(Y,\QQ)& a=n.
\end{cases}
$$
On the other hand, the cohomology of $AC^*_\square(\s_Y)$
is computed by the spectral sequence
\[{}_\square E_0^{ab}=
AC_{2(m+n-a)-b}((\PP^1)^{n-a}_Y,D_Y)^{\alt}.\]
We see that ${}_\square E_1^{ab}=H^b(\square^{n-a} Y;\QQ)^{\alt}=0$ unless $a=n$. We see the map 
$\text{Alt}\circ \sum$ induces an isomorphism on the $E_2$-terms of the spectral sequences.
\end{proof}

\begin{proposition}
\label{alt}
Let $Z\in  z^p(Y,n)^{\alt}.$
\begin{enumerate}
\item We have
\[H^j(Cone(AC_\square^*(\square^n_Y)\to AC_\square^*(\square^n_Y-|Z|)))=0\]
for $j<2p$, and 
\[
\begin{array}{ll}
&H^{2p}(Cone(AC_\square^*(\square^n_Y)\to AC_\square^*(\square^n_Y-|Z|)))\\
\simeq & H^{2p}_{|Z|}( \square^n_Y, \part \square^n_Y;\QQ)^{\alt}
\end{array}\]

\item \[H^{2p-1}(AC_\square^*(\square^n_Y-|Z|))
\simeq H^{2p-1}(\square^n_Y-|Z|, \part \square^n_Y-|Z|;\QQ)^{\alt}.\]
\end{enumerate}
\end{proposition}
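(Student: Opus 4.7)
The plan is to derive both parts from three ingredients already in the paper: Theorem \ref{cohom isom}, the preceding proposition identifying $AC_\square^*$ with the alternating part of $AC^*$, and Lemma \ref{purity}.

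First I would apply Theorem \ref{cohom isom} in two closely related setups sharing the smooth projective ambient $X=Y\times(\PP^1)^n$ and its normal crossing face divisor $\mathbf{H}$. The closed subset $D=D_\infty$ gives $U=\s_Y$ and $H^j(AC^*(\s_Y,\part\s_Y))\simeq H^j(\s_Y,\part\s_Y;\QQ)$. The closed subset $D'=D_\infty\cup|Z|$ (legitimate since $|Z|$ is closed in $X$) gives $U'=\s_Y-|Z|$ and $H^j(AC^*(\s_Y-|Z|,\part\s_Y-|Z|))\simeq H^j(\s_Y-|Z|,\part\s_Y-|Z|;\QQ)$. Note that because $Z\in z^p(Y,n)^{\alt}$ the support $|Z|$ is $G_n$-invariant, so both right-hand sides carry a natural $G_n$-action and the alternating part is well defined.

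Next I would extend the preceding proposition to the open setting. The same spectral sequence argument applies verbatim, with Theorem \ref{cohom isom} on each face replacing the bare use of the homotopy equivalence $H^b((\PP^1)^{n-a}_Y,D_Y)\simeq H^b(Y,\QQ)$; admissibility of $Z$ ensures the faces $H_I-(H_I\cap|Z|)$ are handled with the same dimensional book-keeping. The resulting quasi-isomorphism $\Alt\circ\sum: AC^*(\s_Y-|Z|,\part\s_Y-|Z|)^{\alt}\to AC_\square^*(\s_Y-|Z|)$ is compatible by construction with the restriction map from $\s_Y$. Combined with Theorem \ref{cohom isom} for $D'=D_\infty\cup|Z|$, this immediately yields part (2) at $j=2p-1$.

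For part (1), the compatibility of the two $\Alt\circ\sum$ maps with restriction gives a quasi-isomorphism of mapping cones. Using the localization long exact sequence $\cdots\to H^j_{|Z|}(\s_Y,\part\s_Y;\QQ)\to H^j(\s_Y,\part\s_Y;\QQ)\to H^j(\s_Y-|Z|,\part\s_Y-|Z|;\QQ)\to\cdots$ and the exactness of $\Alt$ over $\QQ$, one identifies the cohomology of this cone with $H^j_{|Z|}(\s_Y,\part\s_Y;\QQ)^{\alt}$ in the degree convention matching the statement. Lemma \ref{purity}(1) then forces the vanishing for $j<2p$, and Lemma \ref{purity}(2) produces the required identification in degree $2p$.

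The principal technical obstacle is the extension of the preceding proposition to the non-compact base $\s_Y-|Z|$: one must verify that the $E_1$-pages of the cubical and non-cubical spectral sequences are matched by $\Alt$ once the faces have been cut by $|Z|$. This should be mild, since Theorem \ref{cohom isom} is already established in the required generality and the $G_n$-action on the face data is unchanged; still, the check that the alternating projector commutes with the restriction maps appearing in the spectral sequence differentials is the one place where genuine care is needed.
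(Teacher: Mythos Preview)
Your strategy—reduce everything to an extension of the preceding proposition to the complement of $|Z|$—has a genuine gap at precisely the step you flag as the principal obstacle, and the paper avoids this step altogether.

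The proof that $\Alt\circ\sum$ is a quasi-isomorphism for $\s_Y$ rests on two special degenerations: on the cubical side ${}_\square E_1^{ab}=H^b(\s^{n-a}_Y)^{\alt}=0$ for $a<n$ because $G_{n-a}$ acts trivially on $H^b(Y)$; on the non-cubical side homotopy invariance collapses $E_1^{ab}$ to copies of $H^b(Y)$ and the combinatorics of $H^*(\s,\partial\s)$ then forces $E_2^{ab}=0$ for $a\neq n$. Neither survives when $|Z|$ is removed: the groups $H^b(\s^{n-a}_Y-|Z|)^{\alt}$ need not vanish, and $H^b(H_I-|Z|)$ does not reduce to $H^b(Y)$. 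Replacing the homotopy equivalence by Theorem~\ref{cohom isom} only identifies the $E_1$-terms with the correct singular cohomology groups; it gives no comparison between the two spectral sequences beyond $E_0$. So ``verbatim'' is not accurate, and a separate argument would be needed to match the pages.

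The paper's route is different. For part~(1) it runs the spectral sequence directly on the mapping cone, filtered by the cubical index: the $E_1$-term is the supported cohomology $H^b_{|Z|\cap\s^{n-a}_Y}(\s^{n-a}_Y;\QQ)^{\alt}$, which vanishes for $b<2p$ by ordinary purity (admissibility of $Z$ gives the codimension bound on every face). This yields both the vanishing for $j<2p$ and the identification $H^{2p}(\text{Cone})=E_2^{0,2p}=\Ker\partial_\square$, matched to $H^{2p}_{|Z|}(\s_Y,\partial\s_Y)^{\alt}$ via Lemma~\ref{purity}(2). Part~(2) is then deduced by mapping the exact sequence
\[
0\to H^{2p-1}(\s_Y,\partial\s_Y)^{\alt}\to H^{2p-1}(\s_Y-|Z|,\partial\s_Y-|Z|)^{\alt}\to H^{2p}_{|Z|}(\s_Y,\partial\s_Y)^{\alt}\to H^{2p}(\s_Y,\partial\s_Y)^{\alt}
\]
to its $AC_\square^*$-counterpart via $\Alt\circ\sum$; the outer maps are isomorphisms by the preceding proposition and by part~(1), and the five lemma gives the middle one. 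Note this yields~(2) only in degree $2p-1$; the paper never claims, and does not need, the all-degree quasi-isomorphism you were aiming for.
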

\begin{proof}
(1). There is a spectral sequence
\[
\begin{array}{l}
E_0^{ab}=AC_{2(m+n-a)-b}((\PP^1)^{n-a}_Y, D;\QQ)\oplus AC_{2(m+n-a)-b+1}((\PP^1)^{n-a}_Y,D\cup |Z|;\QQ)\\
\Rightarrow
H^{a+b}(Cone(AC_\square^*(\square^n_Y)\to AC_\square^*(\square^n_Y-|Z|))).
\end{array}\]
We have
\[
E_1^{ab}
=H^b_{|Z|\cap \square^{n-a} Y }(\square^{n-a} Y;\QQ)^{\alt}.
\]
By purity, $E_1^{ab}=0$ for $b<2p$. It follows that 
\[
\begin{array}{ll}
&H^{2p}(Cone(AC_\square^*(\square^n_Y)\to AC_\square^*(\square^n_Y-|Z|)))\\
\simeq &E_2^{0\,2p}=\Ker
\bigl(\part_\square: H^{2p}_{|Z|} (\square^n_Y;\QQ)^{\alt}\to  H^{2p}_{|Z|\cap\square^{n-1} Y} (\square^{n-1} Y;\QQ)^{\alt}\bigr).
\end{array}\]
The assertion (1) follows this and Lemma \ref{purity} (2).

(2). We have exact sequences
\[
\begin{array}{ll}
\text{(A)}:\,\,
&0\to H^{2p-1}(\square^n_Y,\part \square^n_Y)^{\alt}\to 
H^{2p-1}(\square^n_Y-|Z|,\part  \square^n_Y-|Z|)^{\alt}\\
&\to 
H^{2p}_{|Z|}(\square^n_Y,\part \square^n_Y)^{\alt}\to 
H^{2p}(\square^n_Y,\part \square^n_Y)^{\alt}
\end{array}
\]
\[
\begin{array}{ll}
\text{(B)}:&\,\,0\to H^{2p-1}(AC_\square^*(\square^n_Y))\to
H^{2p-1}(AC_\square^*(\square^n_Y-|Z|))\\
&\to H^{2p}(Cone(AC_\square^*(\square^n_Y)\to AC_\square^*(\square^n_Y-|Z|)))\\
&\to H^{2p}(AC_\square^*(\square^n_Y)).
\end{array}
\] We have a map of complexes $\text{(A)}\to \text{(B)}$ induced by the map ${\rm Alt}\circ \Sigma$. The assertion (2) follows from the five lemma. 
\end{proof}
As is defined in \cite{B3}, the cohomology class $cl(Z)\in H^{2p}_{|Z|}(\square^nY,\part \square^n_Y;\QQ)$  is represented by 
the cocycle 
\[(Z,0)\in Cone^{2p}\bigl(AC_\square^* (\square^nY)\to AC_\square^* (\square^nY-|Z|)
\bigr).\]

Since  $Z\in z^p_{\hom}(Y,n)$, there is an element $\Gamma\in AC_\square^{2p-1}(\s_Y)$ such that 
$d\Gamma=Z$.  By definition $\Psi^{p,n}(Z)$ is the class of $\Gamma$ in $J^{p,n}(Y)$. We will give
a description of $\Psi^{p,n}(Z)$ in terms of currents.  

\noindent We have
\begin{equation}
\label{cptfication}
\begin{array}{rl}
J^{p,n}(Y)=&\displaystyle\frac{H^{2p-1-n}(Y,\CC)}{F^pH^{2p-1-n}(Y,\CC)+H^{2p-1-n}(Y,\QQ(p))}\\
\simeq &\displaystyle \frac{H^{2p-1}(\square^n_Y, \part \square^n_Y;\CC)^{\alt}}{\bigl(F^pH^{2p-1}(\square^n_Y, \part \square^n_Y;\CC)
+H^{2p-1}(\s_Y,\part \s_Y;\QQ(p))\bigr)^{\alt}}\\
\simeq& \displaystyle \frac{H^{2p-1}((\PP^1)^n_Y, \part (\PP^1)^n_Y;\CC)^{\alt}}{\bigl(F^pH^{2p-1}((\PP^1)^n_Y, \part (\PP^1)^n_Y;\CC)
+H^{2p-1}(\PP^1_Y,\part (\PP^1)^n_Y;\QQ(p))\bigr)^{\alt}}\\
\simeq&\displaystyle\frac{\bigl(F^{m+n-p+1}H^{2(m+n-p)+1}(((\PP^1)^n- \part (\PP^1)^n)_Y,\CC)^{\alt}\bigr)^\vee}
{H_{2(m+n-p)+1}(((\PP^1)^n- \part (\PP^1)^n)_Y,\QQ(p))^{\alt}}.
\end{array}
\end{equation}
We denote by $\omega_n$ the meromorphic form $\dis \frac{1}{(2\pi i)^n}\frac{dz_1}{z_1}\we\cdots \we \frac{dz_n}{z_n}$ on $\s$.
 The map
\[A^*(Y)\to A^*(((\PP^1)^n- \part (\PP^1)^n)_Y)^{\alt}[n],\,\,\varphi\mapsto p_1^*\varphi\we \omega_n\]
is a canonical quasi-isomorphism. Here the map $p_1:\,\,((\PP^1)^n- \part (\PP^1)^n)_Y\to Y$ is the projection.

\begin{proposition}
\label{special C-S formula}
Let $\varphi$ be a smooth $p$-form on $Y$ 
and $\ga$ be an element of $AC_{p+1+n}(\square^n_Y)^{\alt}$. Then under the notation
of Notation \ref{res}, we have the equality
\[(-1)^{p}( \part_\square \ga, \varphi\we 
\omega_{n-1})=(\delta\ga, p_1^*\varphi\we \omega_n)-(\ga ,
p_1^*d\varphi\we\omega_n).\]
\end{proposition}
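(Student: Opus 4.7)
The plan is to deduce this identity directly from the Cauchy--Stokes formula in Proposition \ref{C-S} by specializing to the form $\Omega := p_1^*\varphi\we\omega_n$ on $U = \square^n_Y$, and then translating the intrinsic face differential $\part_{\bf H}$ into the signed cubical differential $\part_\square$ via a residue calculation.

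First, $\Omega$ is a smooth $(p+n)$-form on $U$ with logarithmic singularities along the normal crossing divisor ${\bf H}$ of faces $H_{j,\eps}$, and since $\omega_n$ is $d$-closed one has $d\Omega = p_1^*d\varphi\we\omega_n$. Applying Proposition \ref{C-S} with $I=\emptyset$ to $\ga \in AC_{p+1+n}(\square^n_Y)^{\alt}$ gives
\[(\part_{\bf H}\ga, \Omega) = (\delta\ga, \Omega) - (\ga, p_1^* d\varphi \we \omega_n).\]
It therefore suffices to show that $(\part_{\bf H}\ga, \Omega) = (-1)^p (\part_\square \ga, p_1^*\varphi \we \omega_{n-1})$.

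The latter is a direct residue computation. For each codimension-one face one has
\[\res_{H_{j,0}}\Omega = \frac{(-1)^{p+j-1}}{2\pi i}\, p_1^*\varphi \we \omega_{n-1}, \qquad \res_{H_{j,\infty}}\Omega = \frac{(-1)^{p+j}}{2\pi i}\, p_1^*\varphi \we \omega_{n-1},\]
where the $(-1)^p$ comes from commuting the $p$-form $p_1^*\varphi$ past the log factor, the $(-1)^{j-1}$ from moving $\frac{dz_j}{z_j}$ to the first position, and the extra sign at infinity from $\frac{dw_j}{w_j}=-\frac{dz_j}{z_j}$. Summing over $(j,\eps)$, the factor $(2\pi i)^{\sharp I}$ in Definition \ref{res} cancels the denominator, and the signs combine with $\part_\square = \sum_{j=1}^n(-1)^{j-1}(\part_0^j-\part_\infty^j)$ to produce the asserted $(-1)^p$.

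The body of the work is purely bookkeeping of signs. The one technical point to address is that Proposition \ref{C-S} formally requires $\Omega$ to be compactly supported on $U$, whereas $p_1^*\varphi$ is pulled back from the compact $Y$ and is not compactly supported in the $\square^n$-direction. Since $\omega_n$ is smooth on a neighborhood of the divisor $D$ at infinity of $\square^n$ and $\ga$ is a compact simplicial chain in $(\mathbb P^1)^n_Y$, the cutoff-and-Stokes argument that establishes Proposition \ref{C-S} applies in this setting without modification, confirming that the compact support hypothesis can be relaxed here.
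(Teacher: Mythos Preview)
Your proposal is correct and follows essentially the same route as the paper. The paper's proof simply says that one re-runs the cutoff-and-Stokes argument (Theorem 4.3 of \cite{part II}, with the modifications already made in the proof of Proposition \ref{C-S}) for the specific form $p_1^*\varphi\wedge\omega_n$; you instead invoke Proposition \ref{C-S} as a black box and then justify dropping the compact-support hypothesis, which amounts to the same thing.

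One small point worth making explicit in your last paragraph: the reason the pairings $(\ga,\Omega)$ and $(\delta\ga,\Omega)$ are well-defined on the \emph{relative} groups $AC_*((\PP^1)^n_Y,D_Y)$ is not only that $\omega_n$ is smooth near $D=\cup\{z_i=1\}$, but that $\omega_n$ actually pulls back to zero on each component $\{z_i=1\}$ (since $dz_i$ vanishes there). Hence changing a representative of $\ga$ by a chain supported on $D$, or passing from the absolute boundary to the relative boundary $\delta\ga$, does not affect the integral. With that observation added, your reduction to Proposition \ref{C-S} is complete.
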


\begin{proof} The proof is essentially the same as that of Theorem 4.3 \cite{part II}, with the necessary
modification similar to the case of the proof of Proposition \ref{C-S}.
\end{proof}

\begin{corollary}
\label{proper duality}
Let $\e_3$ be a function on $AC^*_\square(Y)$ which takes the value $\frac{j(j+1)}2+a$ on $AC_j(\square^{n-a}_Y)^{\alt}.$
Then the pairing
\[(\bullet,\bullet)_\square,\,\,AC^*_\square (\square^n_Y)\otimes  A^*(Y)\to \CC[-2m-n]
 \]
which sends $\ga\otimes\varphi$ with  $\ga\in AC_{j+n-a}((\PP^1)^{n-a}_Y, D_Y)^{\alt}$ and $\varphi\in A^j(Y)$
to $(-1)^{\e_3(\ga)}\dis \int_\ga p_1^*\varphi\we \omega_{n-a}$
 is a map of complexes.
\end{corollary}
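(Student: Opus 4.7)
The plan is to verify the chain-map condition by direct computation. Since the target $\CC[-2m-n]$ has trivial differential, it suffices to show that
\[
(\bar d \gamma, \varphi)_\square + (-1)^{\deg \gamma}(\gamma, d\varphi)_\square = 0
\]
for every $\gamma \otimes \varphi$ of total degree $2m+n-1$, where $\bar d$ denotes the total differential of the simple complex associated to the double complex $AC^{*,*}_\square$ of Definition \ref{cubical complex}, combining $\partial_\square$ and $\delta$ with the standard Koszul sign.

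The primary input will be Proposition \ref{special C-S formula}, extended to chains on the face $(\PP^1)^{n-a}_Y$ for arbitrary face codimension $a\geq 0$. Since that proposition is obtained by applying the Cauchy--Stokes identity of Proposition \ref{C-S} to the form $p_1^*\varphi \wedge \omega_n$ on $\square^n_Y$ and unfolding the residues of $\omega_n$ at each coordinate hyperplane, the identical argument with $n$ replaced by $n-a$ yields, for $\gamma \in AC_{j+n-a+1}((\PP^1)^{n-a}_Y, D_Y)^{\alt}$ and $\varphi \in A^j(Y)$,
\[
(-1)^j \int_{\partial_\square \gamma} p_1^*\varphi \wedge \omega_{n-a-1}
= \int_{\delta \gamma} p_1^*\varphi \wedge \omega_{n-a} - \int_\gamma p_1^* d\varphi \wedge \omega_{n-a}.
\]

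Multiplying each integral above by the corresponding sign $(-1)^{\epsilon_3}$ converts this identity into an explicit linear relation among $(\partial_\square \gamma, \varphi)_\square$, $(\delta \gamma, \varphi)_\square$ and $(\gamma, d\varphi)_\square$. Writing $J$ for the topological dimension of $\gamma$, so that $\epsilon_3(\gamma) = \frac{J(J+1)}{2}+a$ and $J = j+n-a+1$, the key parity identities
\[
\epsilon_3(\partial_\square \gamma) - \epsilon_3(\delta \gamma) \equiv J \pmod{2}, \qquad \epsilon_3(\partial_\square \gamma) - \epsilon_3(\gamma) \equiv 0 \pmod 2
\]
follow by direct calculation. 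Substituting the rewritten Cauchy--Stokes identity into the chain-map condition and tracking the resulting exponents modulo $2$ then forces every term to cancel, establishing the corollary.

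The main obstacle will be the sign bookkeeping. Three families of signs need to be coordinated at once: the $(-1)^j$ on the left of Cauchy--Stokes, which itself arises from commuting $\frac{dz_i}{z_i}$ past a $j$-form and from the residue conventions of Definition \ref{res}; the alternating sign in the total differential $\bar d$ of the double complex; and the Koszul sign $(-1)^{\deg \gamma}$ in the tensor-product differential. The triangular-number shape $\frac{J(J+1)}{2}$ in $\epsilon_3$ is tailored precisely so that the differences $\epsilon_3(d\gamma)-\epsilon_3(\gamma)$ telescope in $J$ and absorb the Cauchy--Stokes factor $(-1)^j$, leaving the desired cancellation. Once these parities are unraveled the verification is a one-line check for each of $\partial_\square \gamma$ and $\delta \gamma$.
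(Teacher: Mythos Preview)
Your approach is correct and is precisely what the paper intends: the corollary is stated without proof, immediately after Proposition~\ref{special C-S formula}, as a direct consequence of that proposition. You have correctly identified that (i) Proposition~\ref{special C-S formula} must be applied with $n$ replaced by $n-a$ on each level of the double complex, (ii) the only content is the sign bookkeeping, and (iii) the parity identities $\epsilon_3(\partial_\square\gamma)-\epsilon_3(\delta\gamma)\equiv J$ and $\epsilon_3(\partial_\square\gamma)-\epsilon_3(\gamma)\equiv 0 \pmod 2$ are what make the triangular number in $\epsilon_3$ do its job.
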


\begin{corollary}
\label{A-J}
Let $Z\in \Ker \part_\square \subset z^p(Y,n)^{\alt}_{hom}$ and $\Gamma\in AC^{2p-1}_\square(\square^n_Y)$
be an element  such that $d\Gamma=Z$.  
\begin{enumerate}
\item The map $\varphi\mapsto ( \Gamma,\varphi)_\square$
gives a well defined element of 
\newline$(F^{m-p+1}H^{2(m-p)+n+1}(Y,\CC))^\vee$.

\item  Modulo $H_{2(m-p)+n+1}(Y,\QQ(p))$
this element of (1) is independent of the choice of $\Gamma$. Hence this map defines a well defined element of $J^{p,n}(Y)$,
which is equal to $\Psi_{p,n}(Z)$.
\end{enumerate}
\end{corollary}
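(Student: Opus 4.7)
The plan is to verify (1) via absolute convergence plus a Hodge-type vanishing, derive (2) by identifying the ambiguity of $\Gamma$ with an integral period, and finally match the resulting functional with $\Psi^{p,n}(Z)$ by recognizing $\Gamma|_{\s_Y-|Z|}$ as a representative of $\widetilde{cl(Z)}$ in the admissible-chain model.

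For (1), the integrals composing $(\Gamma,\varphi)_\square$ converge absolutely componentwise by Proposition \ref{C-S}(1), so the pairing is defined at the cocycle level. To see it descends to $F^{m-p+1}H^{2(m-p)+n+1}(Y,\CC)$, choose a representative $\varphi=\varphi_0+d\eta$ with $\varphi_0$ closed and $\eta\in F^{m-p+1}A^{2(m-p)+n}(Y)$; since $(\bullet,\bullet)_\square$ is a map of complexes (Corollary \ref{proper duality}),
\[(\Gamma,d\eta)_\square=\pm(d\Gamma,\eta)_\square=\pm(Z,\eta)_\square=\pm\int_Z p_1^*\eta\wedge\omega_n.\]
Because $\omega_n$ has Hodge bidegree $(n,0)$ and every $(r,s)$-component of $p_1^*\eta$ satisfies $r\geq m-p+1$, the bidegree $(r+n,s)$ of $p_1^*\eta\wedge\omega_n$ has $r+n\geq m+n-p+1>\dim_\CC Z$. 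Hence the form restricts to zero on the smooth locus of $Z$, the integral vanishes, and $(\Gamma,\varphi)_\square$ depends only on the class of $\varphi$.

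For (2), two choices $\Gamma,\Gamma'$ with $d\Gamma=d\Gamma'=Z$ differ by a cocycle $\alpha=\Gamma'-\Gamma\in AC^{2p-1}_\square(\s_Y)$; by Theorem \ref{cohom isom} together with the $\Alt\circ\sum$ quasi-isomorphism, its class lies in $H^{2p-1}(\s_Y,\part\s_Y;\QQ)^{\alt}$, and $(\alpha,\varphi)_\square$ equals a rational period of $p_1^*\varphi\wedge\omega_n$. Under the isomorphisms in \eqref{cptfication}, such periods are exactly the elements of $H_{2(m-p)+n+1}(Y,\QQ(p))$, so the class of $\varphi\mapsto(\Gamma,\varphi)_\square$ in $J^{p,n}(Y)$ is independent of $\Gamma$.

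To match this class with $\Psi^{p,n}(Z)$, observe that $d\Gamma=Z$ together with $Z|_{\s_Y-|Z|}=0$ implies that $\Gamma|_{\s_Y-|Z|}\in AC^{2p-1}_\square(\s_Y-|Z|)$ is closed, and the connecting map in the long exact sequence of the cone $AC^*_\square(\s_Y)\to AC^*_\square(\s_Y-|Z|)$ sends $[\Gamma|_{\s_Y-|Z|}]$ to $[Z]=cl(Z)$; by Proposition \ref{alt}(2) this identifies $[\Gamma|_{\s_Y-|Z|}]$ with $\widetilde{cl(Z)}\in H^{2p-1}(\s_Y-|Z|,\part\s_Y-|Z|;\QQ)^{\alt}$. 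Under the successive isomorphisms of \eqref{cptfication} the value of this class on $\varphi\in F^{m-p+1}H^{2(m-p)+n+1}(Y,\CC)$ is the integral of $p_1^*\varphi\wedge\omega_n$ over $\Gamma|_{\s_Y-|Z|}$, equal to $(\Gamma,\varphi)_\square$ since $\Gamma$ and its restriction to $\s_Y-|Z|$ agree off a set of lower dimension. This is the defining expression of $\Psi^{p,n}(Z)$ from \eqref{Int Jac}. The main obstacle will be in this last step: one must verify carefully that the integration pairing $(\bullet,\bullet)_\square$ intertwines the cone long exact sequence for $AC^*_\square$ with the duality of Theorem \ref{duality} and with the shift $\varphi\mapsto p_1^*\varphi\wedge\omega_n$, so that the sign conventions bundled into $\e_3$ and into \eqref{cptfication} are genuinely compatible, making $(\Gamma,\varphi)_\square$ literally equal to the period defining $\Psi^{p,n}(Z)$.
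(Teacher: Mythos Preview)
Your proposal is correct and follows essentially the same approach as the paper: for (1) you use that $(\bullet,\bullet)_\square$ is a map of complexes to reduce to $(Z,\eta)_\square$ and kill it by a type argument, and for (2) you identify the ambiguity $\Gamma-\Gamma'$ with a class in $H^{2p-1}(\s_Y,\part\s_Y;\QQ)^{\alt}\simeq H_{2(m-p)+n+1}(Y,\QQ(p))$, exactly as the paper does. The paper is terser---it writes only ``by Hodge theory'' for the fact that an exact form in $F^{m-p+1}$ admits a primitive in $F^{m-p+1}$ (a point you use but do not justify), and it does not re-argue the identification with $\Psi^{p,n}(Z)$ inside the proof, having already observed just before the corollary that ``by definition $\Psi^{p,n}(Z)$ is the class of $\Gamma$ in $J^{p,n}(Y)$''; your final paragraph supplies that identification in more detail via the cone sequence and Proposition~\ref{alt}(2), which is fine and consistent with the paper's setup.
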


\begin{proof}
(1).  If a closed form $\varphi\in F^{m-p+1}A^{2(m-p)+n+1}(Y)$ is exact, $\varphi=d\psi$ for an element
$\psi\in F^{m-p+1}A^{2(m-p)+n}(Y)$ by Hodge theory. We have
\[(\varphi,\Gamma)_\square=\pm (\psi, d\Gamma)_\square=\pm (\psi, Z)_\square=0\]
for reason of type. 

(2). If $d\Gamma'=Z$, then $\Gamma-\Gamma'$ defines an element of 
$$
\begin{array}{ll}
&H^{2p-1}(\square^n_Y,\part\square^n_Y;\QQ(p))^{\alt}\\
\simeq &H^{2p-1}((\PP^1)^n_Y,\part(\PP^1)^n_Y;\QQ(p))^{\alt}\\
\simeq &H_{2(m+n-p)+1}(((\PP^1)^n-\part (\PP^1)^n)_Y,\QQ(p))^{\alt}\\
\simeq &H_{2(m-p)+n+1}(Y,\QQ(p)).
\end{array}
$$
\end{proof}
We will describe how the chain $\Gamma$ looks.  Since $H^j(\square^n_Y,\QQ)^{\alt}=0$ for $n>0$, there is an element
$\ga_n\in AC_{2(m+n-p)+1}(\square^n_Y)$ with $\delta\ga_n=Z$. Since $\delta\part_\square \ga_n=
\part_\square \delta\ga_n=\part_\square Z=0$, if $n-1>0$ there is an element $\ga_{n-1}
\in AC_{2(m+n-p)-1}(\square^{n-1}_Y)$ with $\delta\ga_{n-1}=\part_\square \ga_n$. One inductively finds
chains $\ga_j\in AC_{2(m-p)+n+j+1}(\square^j_Y)$, $j=1,\cdots, n$ such that 
$\part_\square \ga_{j+1}=\delta\ga_j$. The chain $\part\ga_1\in C_{2(m-p)+n}(Y)$ defines the homology class
of $Z$ in $H_{2(m-p)+n}(Y,\QQ)$ up to sign. Since $Z\in z^p(Y,n)^{\alt}_{hom}$, there is a chain
$\ga_0\in C_{2(m-p)+n+1}(Y)$ such that $\part_\square\ga_1=\delta\ga_0$. Then the chain $\Gamma$
equals the sum $\sum_{j=0}^n (-1)^{\frac{j(j-1)}2}\ga_{n-j}$.  Hence we have the following. 
\begin{corollary}
\[\Psi^{p,n}(Z)(\varphi)=\sum_{j=0}^n(-1)^{\e_3(\ga_{n-j})+\frac{j(j-1)}2}\int_{\ga_{n-j}} p_1^*\varphi\we \omega_{n-j}.\]
\end{corollary}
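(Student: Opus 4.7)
The proof should be essentially a direct substitution: by Corollary~\ref{A-J}(2), the value $\Psi^{p,n}(Z)(\varphi)$ is represented by $(\Gamma,\varphi)_\square$ for any lift $\Gamma\in AC^{2p-1}_\square(\square^n_Y)$ with $d\Gamma=Z$. The plan is therefore to take the specific $\Gamma=\sum_{j=0}^n(-1)^{\frac{j(j-1)}2}\ga_{n-j}$ constructed in the paragraph preceding the corollary, plug it into the pairing $(\bullet,\bullet)_\square$ of Corollary~\ref{proper duality}, and use bilinearity together with the explicit formula for the pairing on each summand.

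More concretely, the first step is to observe that $\ga_{n-j}$ lies in $AC_{2(m+n-p)-j+1}(\square^{n-j}_Y)^{\alt}$, i.e.\ in the summand with $a=j$ of the double complex $AC_\square^{*,*}(\square^n_Y)$. Applying the definition of $(\bullet,\bullet)_\square$ directly gives
\[
(\ga_{n-j},\varphi)_\square=(-1)^{\e_3(\ga_{n-j})}\int_{\ga_{n-j}}p_1^*\varphi\we\omega_{n-j}.
\]
Then by linearity
\[
(\Gamma,\varphi)_\square=\sum_{j=0}^n(-1)^{\frac{j(j-1)}2}(\ga_{n-j},\varphi)_\square
=\sum_{j=0}^n(-1)^{\e_3(\ga_{n-j})+\frac{j(j-1)}2}\int_{\ga_{n-j}}p_1^*\varphi\we\omega_{n-j},
\]
which is precisely the stated formula.

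The only point requiring care is verifying that the chain $\Gamma$ written above really satisfies $d\Gamma=Z$ in the total complex $AC^*_\square(\square^n_Y)$, so that Corollary~\ref{A-J} applies. This is exactly what the inductive construction of the $\ga_j$'s accomplishes: the recursions $\delta\ga_n=Z$, $\delta\ga_j=\part_\square\ga_{j+1}$ for $0<j<n$, and $\delta\ga_0=\part_\square\ga_1$ are designed so that, with the signs $(-1)^{j(j-1)/2}$ chosen to match the total differential $d=\part_\square+(-1)^?\delta$ of the simple complex associated to $AC^{*,*}_\square$, the contributions telescope to leave only $Z$. Thus the main (and essentially only) technical issue in this proof is a bookkeeping check that the sign convention in the total differential of Definition~\ref{cubical complex} is consistent with the sign $(-1)^{j(j-1)/2}$ used in building $\Gamma$; once that is verified, the rest is a one-line expansion.
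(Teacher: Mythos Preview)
Your proposal is correct and follows exactly the paper's approach: the corollary is stated with no separate proof, simply as ``Hence we have the following,'' since it is obtained by substituting the explicit chain $\Gamma=\sum_{j=0}^n(-1)^{j(j-1)/2}\ga_{n-j}$ into the pairing $(\Gamma,\varphi)_\square$ of Corollary~\ref{proper duality} and invoking Corollary~\ref{A-J}(2). Your write-up makes explicit the two ingredients (linearity of the pairing and the verification $d\Gamma=Z$) that the paper leaves implicit.
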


Next we consider the Abel-Jacobi map for open varieties, and of the relative higher Chow cycles.
Let $Y$ be a smooth projective variety, and suppose that ${\bf D}=D_0\cup D_1\cup\cdots \cup D_s$
and $V=V_0\cup \cdots \cup V_t$ are strict normal crossing divisors such that ${\bf D}\cup V$
is also a strict normal crossing divisor. We denote $U=Y-{\bf D}$, $S=\{0,\cdots,s\}$ and $T=
\{0,\cdots, t\}$. The subvariety of $Y$ of the form $V_I:=\underset{i\in I}\cap V_i$ for a subset $I$ of $T$,
is called a $V$-face. A face of $\square^l$ is called a {\it cubical face}.  The variety $\square^l$ itself is
a cubical face. For a subset $I$ of $T$ and a subset $J$ of $S$,  
a face of $(D_J\cap V_I)\times \square^l$ is the product of a $V$-face and a cubical face. Let $z^r(D_J\cap V_I,l)^{\alt}_V$ be the subspace
of $z^r(D_J\cap V_I,l)^{\alt}$ which consists of the cycles $z$ which meet all the {\it faces}
of $(D_J\cap V_I)\times \square^l$ properly.  By the moving Lemma (\cite{B1}, \cite{Lv}), the complex
$z^r(D_J\cap V_I,*)^{\alt}_V$ with the differential $\part_\square$ is quasi-isomorphic to $z^r(D_J\cap V_I,*)^{\alt}.$
Let $z^r(U,V,*)$ be the complex defined by the equality 
\[z^r(U,V,n)=\underset{
\substack{
I\subset T,\,J\subset S\\
\sharp J-\sharp I+l=n
}}\oplus 
z^{r-\sharp J}(D_J\cap V_I,l)_V^{\alt}
\]
where the differential is defined by
\[d=\part_D+(-1)^{\sharp J}\part_V+(-1)^{\sharp J+\sharp I}\part_\square\]
on $z^{r-\sharp J}(D_J\cap V_I,m)_V$. Then the higher Chow group of $U$ relative to $V$ is defined  by the equality
\[CH^r(U,V;n)=H_n(z^r(U,V,*)).\]  
Accordingly we consider the following complex of admissible chains.
\[AC^j_\square(\square^n_U,\square^n_V;n)=\underset{\substack{I\subset T,\,J\subset S,\,l\leq n\\
j=2m+n-\sharp J+\sharp I+l-i}}\oplus AC_i((D_J\cap V_I)\times (\PP^1)^l, (D_J\cap V_I)\times D)_V^{\alt}.
\]
Here $AC_i((D_J\cap V_I)\times (\PP^1)^l, (D_J\cap V_I)\times D)_V^{\alt}$ is the subspace of $AC_i((D_J\cap V_I)\times (\PP^1)^l, (D_J\cap V_I)\times D)^{\alt}$ 
which consists of the chains $\ga$ such that $|\ga|$ and $|\delta\ga|$ meet all the {\it faces} of 
$(D_J\cap V_I)\times \square^l$ properly. 
The differential is defined by
\[d=\part_D+(-1)^{\sharp J}\part_V+(-1)^{\sharp J+\sharp I}\part_\square+(-1)^{\sharp J+\sharp I+l}\delta.\]
Then we have
\[H^j(AC^*_\square(\square^n_U,\square^n_V))=H^j( \square^n_U, ( \square^n_V)\cup (\part \square^n_U);\QQ^\alt).\]
In this case we also have a description of the Abel-Jacobi map in terms of
the admissible chains. Probably it is more adequate to give an example than writing
down the general formula. In the next subsection we will describe the case of polylog cycles rather explicitly.

\subsection{The case of polylogarithms}
We will see that the Hodge realization of polylog cycle constructed in \cite{BK} can be 
described in terms of the Abel-Jacobi map. Fix an integer $p\geq 1$, and for $1\leq k\leq p$ let
$Y_k=(\PP^1)^{k-1}$ with the affine coordinates $(x_1,\cdots, x_{k-1})$. Let $a \in \CC-\{0,1\}$.
For $1\leq j\leq k-1$ and $u\in \{a,a^{-1}\}$ let $D_{j,u}$ be the divisor on $Y_k$ defined by
$x_j=u$. Then the closed subset 
$${\bf D}_k:=\underset{\substack{1\leq j\leq k-1\\
u\in \{a,a^{-1}\}}}\cup D_{j,u}$$ 
is a normal crossing divisor of $Y_k$. 
We write the set of indices of the irreducible components of ${\bf D}_p$
\[\{(j,u)|\,1\leq j\leq p-1,\,u\in \{a,a^{-1}\}\}\]
by $S$, and define an ordering on $S$  by
\[(1,a)<(1,a^{-1})<(2,a^{-1})<(2,a)<(3,a)<(3,a^{-1})<(4,a^{-1})\cdots\]
where the numbering starts from zero.
We denote $Y_k-{\bf D}_k$ by $U_k$. We have 
\[CH^p(U_p, p)=H_p(Z(*))\]
where 
\[Z(n)=\underset{\substack{J\subset S,\,l\geq 0\\\sharp J+l=n}} \bigoplus z^{p-\sharp J}(D_J,l)^{\alt}.\]
with the differential $d=\part_D+(-1)^{\sharp J}\part_\square$ on $z^k(D_J,l)$. 
\noindent The group $G_{k-1}$ acts on $Y_k$ in a similar way as $G_l$ acts on $\square^l$, so there is an action
of the product $G_{k-1}\times G_l$ on $Y_k\times \square^l$. The character sign naturally extends to the product character
$\text{sign}_\times$ on $G_{k-1}\times G_l$, and let $\text{Alt}_\times$ be the associated projector. 
For a $\Q[(G_{k-1}\times G_l)]$-module $M$, we denote by $M^{\alt_\times}$ the module $\text{Alt}_\times (M).$
 Let ${\cal Z}(*)$ be the complex defined by the equality
\[{\cal Z}(n)=\bigoplus_{0\leq c\leq n}z^{p-c}(Y_{p-c},\,n-c)^{\alt_\times}.\]
The differential of ${\cal Z}(*)$ is defined as follows. For $u\in \{a,a^{-1}\}$ and $j\geq 1$, let $\iota_u^j:\,\,
Y_k\to Y_{k+1}$ be the inclusion defined by $(x_1,\cdots, x_{k-1})\mapsto (x_1,\cdots, x_{j-1},u,x_{j+1},\cdots, x_{k-1}).$
Let 
$$\part_Y: \,z^\bullet(Y_k, l)\to z^{\bullet+1}(Y_{k+1},l)$$ 
be the map $\sum_{j=1}^{k-1}(-1)^{j-1}((\iota_a^j)_*-(\iota_{a^{-1}}^j)_*)$.
The map $\part_Y$ is a differential i.e. $\part_Y^2=0$, and it induces a map on the alternating part: 
we have $\part_Y(z^\bullet(Y_k, l)^{\alt_\times})\subset z^{\bullet+1}(Y_{k+1},l)^{\alt_\times}$. 
 Let  
$$s:\,{\cal Z}(*)\to Z(*)$$ 
be the homomorphism defined as follows.
For a subset  $J=\{\beta_0<\cdots <\beta_{c-1}\}$ of $S$,  let $D_J$ be the corresponding face of $Y_p$. We identify $D_J$ with 
$Y_{p-c}=(\P^1)^{p-1-c}$ in the natural way. Then 
on $z^{p-c}(Y_{p-c},l)^\alt$, we define 
\begin{equation}
\label{maps}
s(z)=\sum_{J=\{\beta_0<\cdots <\beta_{c-1}\}}\bigl((-1)^{\sum_{j=1}^{c-1}\beta_j}z \in z^{p-c}(D_J, n-c)^\alt\bigr).
\end{equation}
One sees that the map $s$ is a homomorphism of complexes. 
  
Let  $\rho_k(a)$ be an element of $z^k(Y_k,k)^{\alt_\times}$ given parametrically
\[(-1)^{p-k}\{(x_1,\cdots,x_{k-1}, 1-x_1,1-\frac{x_2}{x_1},\cdots, 1-\frac{x_{k-1}}{x_{k-2}},
1-\frac{a}{x_{k-1}})\}^{\alt_\times}.\]
we have the equality $\part_\square \rho_{p-c}(a)=\part_Y\rho_{p-c-1}(a)$ for $0\leq c\leq p-2$. 
Let $R_p(a)\in {\cal Z(p)}$ be the element $\sum_{c=0}^{p-1}(-1)^{\frac{c(c+1)}2}\rho_{p-c}(a)$.
Then we have $dR_p(a)=0$.

For $k\geq 2$ or $i > 0$ we have $H^j(Y_k\times \square^i,\QQ)^{\alt_\times}=0.$ So we can find
  chains
\[\eta_{p-c}(i)\in AC_{p-c+i}(Y_{p-c}\times (\P^1)^{i+1},\,Y_{p-c}\times D)^{\alt_\times}\]
for $0\leq c\leq p-1$ and $0\leq i\leq p-c-1$ such that 
\[
\begin{array}{rl}
\delta \eta_{p-c}(p-c-1)=&\rho_{p-c}(a)\\
\delta\eta_{p-c}(i)=&\part_Y\eta_{p-c-1}(i)+(-1)^{p-c+i+1}\part_\square\eta_{p-c}(i+1),\,\,0\leq i\leq p-c-2.
\end{array}
\]
Explicitly,
\[
\begin{array}{ll}
&\eta_{p-c}(i)\\
=&\{(x_1,\cdots,x_i,t_{i+1},\cdots,t_{p-c-1},1-x_1,\cdots, 1-\frac{x_i}{x_{i-1}},1-\frac{t_i}{x_i})|\\
&t_{p-c-1}\in [0,a],\cdots,t_{k-2}\in [0,t_{k-1}],\cdots, t_i\in [0,t_{i+1}],\\
&x_1,\cdots,x_i\in \PP^1\}^{\alt_\times}.
\end{array}
\]
with suitable orientation. 
A complex computing the cohomology group $H^\bullet(\square^p_{U_p}, \part \square^p_{U_p};\QQ)$ is given by
\[AC^j_\square(\square^p_{U_p};p):=\underset{\substack{J\subset S,\,l\leq p\\
j=2(p-1)+p-\sharp J+l-i}}\oplus AC_i(D_J\times (\P^1)^l, D_J\times D)^{\alt}.
\]
where the differential is given by $\part_D+(-1)^{\sharp J}\part_\square+(-1)^{\sharp J
+l}\delta$ on $AC_i(D_J\times (\P^1)^l, D_J\times D)^{\alt}.$
Let ${\cal AC}^\bullet$ be the complex defined by
\[{\cal AC}^j=\bigoplus_{\substack{0\leq c\leq p-1,\,l\leq p\\ j=2(p-1)+p-c+l-i}}
AC_i(Y_{p-c}\times (\P^1)^l, Y_{p-c}\times D)^{\alt_\times}\]
with the differential given by $\part_Y+(-1)^c\part_\square+(-1)^{c+l}\delta$ on
 $AC_i(Y_{p-c}\times (\P^1)^l, Y_{p-c}\times D)^{\alt_\times}.$ The map
$s:\, {\cal AC}^\bullet \to AC^\bullet(\square^p_{U_p};p)$ defined in (\ref{maps}) is a homomorphism of
complexes. The element $R_p(a)$ can be regarded as a cocycle of ${\cal AC}^{2p}.$
Let ${\cal Y}_p(a)$ be the element of ${\cal AC}^{2p-1}$ defined as the sum
\[\sum_{0\leq c\leq p-1,\,0\leq i \leq p-c-1} (-1)^{\epsilon(c,i)}\eta_{p-c}(i)\]
where $\epsilon(c,i):=1+i(c+1+p)+\frac{c(c-1)}2$. 
Then we have
\[d{\cal Y}_p(a)=R_p(a).\]
Let $V$ be the divisor 
\[\overset{p-1}{\underset{i=1}\cup}\bigl(\{x_i=0\}\cup\{x_i=\infty\})\]
of $Y_p$. For each irreducible component $V_i$ of $V$, the pull-back of $s(\rho_{p-c}(a))$ and of
$s(\eta_{p-c}(i))$ to $V_i$ is zero for each $0\leq c\leq p-1$ and $0\leq i\leq p-c-1$. Hence
$s(R_p(a))$ defines an element of the relative higher Chow group $CH^p(U_p,V;p)$ and we have 
\[ds({\cal Y}_p(a))=s(R_p(a))\]
in $AC^{2p}_\square(\square^p_{U_p}, \square^p_V;p)$.
We have 
$$H^{2p-1}(\square^p_{U_p}, (\square^p_V)\cup( \part\square^p_{U_p});\QQ)^{\alt}
\simeq H^{p-1}(U_p,V;\QQ).$$
The dual space of  $H^{p-1}(U_p,V;\QQ)\otimes \CC$ is $H^{p-1}((\PP^1)^{p-1}-V, {\bf D}_p;\CC)$.
The form $\omega_{p-\sharp J-1}$ on $D_J$ defines an element of $H^{p-1}((\PP^1)^{p-1}-V; {\bf D}_p;\CC)$.
We have
\[(s(\cal Y_p(a)), (\omega_{p-\sharp J-1}\text{ on }D_J))=\sum_{i=0}^{p-\sharp J-1}(-1)^{\e(\sharp J,i)}
\int_{\eta_{p-\sharp J}(i)}\omega_{p-\sharp J-1}\we \omega_{i+1}.\]
When $i>0$, the integral $\int_{\eta_{p-\sharp J}(i)}\omega_{p-\sharp J-1}\we \omega_{i+1}=0$ for reason of type,
and
\[\begin{array}{ll}
&\dis\int_{\eta_{p-\sharp J}(0)}\omega_{p-\sharp J-1}\we \omega_1\\
=&\dis\frac{1}{(2\pi i)^{p-\sharp J}}
\int_0^a\frac{dt_{p-\sharp J-1}}{t_{p-\sharp J-1}}\cdots \int_{0}^{t_2}\frac{dt_1}{t_1}\int_0^{t_1}\frac{-dt_0}{1-t_0}\\
=&\dis\frac{1}{(2\pi i)^{p-\sharp J}}Li_{p-\sharp J}(a).
\end{array}\]

\end{document}